\def \cftil#1{\~#1}\fi
\renewcommand{\and}{et}
 \DeclareMathAlphabet{\got}{U}{euf}{m}{n}     
\DeclareMathAlphabet{\mat}{U}{msb}{m}{n}     
\DeclareMathAlphabet{\mathbold}{OML}{cmm}{bx}{it} 
\newtheorem{theo}{Th\'eor\`eme}[section]
\newtheorem{lem}{Lemme}[section]
\newtheorem{defis}{D\'efinitions}[section]
\newtheorem{cor}{Corollaire}[section]
\newtheorem{rem}{Remarque}[section]
\newtheorem{Ex}{Exemple}[section]
\newcommand{\cur}{\mathscr}
\providecommand*{\diff}%
{\@ifnextchar^{\DIfF}{\DIfF^{}}}
\def\DIfF^#1{%
\mathop{\mathrm{\mathstrut d}}%
\nolimits^{#1}\gobblespace}
\def\gobblespace{%
\futurelet\diffarg\opspace}
\def\opspace{%
\let\DiffSpace\!%
\ifx\diffarg(%
\let\DiffSpace\relax
\else
\ifx\diffarg[%
\let\DiffSpace\relax
\else
\ifx\diffarg\{%
\let\DiffSpace\relax
\fi\fi\fi\DiffSpace}
\providecommand*{\eu}%
{\ensuremath{\mathrm{e}}}
\providecommand*{\iu}%
{\ensuremath{\mathrm{i}}}
\DeclareMathOperator{\Ad }{Ad }
\begin{document}
\title{indice des sous-alg\`ebres biparaboliques d'une alg\`ebre de Lie simple classique}

\date{\today}
\selectlanguage{french}
\author{Meher Bouhani}
\address{ 
Universit\'{e} Tunis El-Manar\\
Facult\'{e} des Sciences de Tunis\\
D\'{e}partement de Math\'{e}matiques\\
Campus Universitaire\\
2092 El-Manar
 \\    Tunis, Tunisie}
\email{Meher.Bouhani@math.univ-poitiers.fr}

\maketitle 

\selectlanguage{english} 
\begin{abstract}
We generalize the results in \cite{MB1} giving a reduction algorithm allowing to compute the index of seaweed subalgebras of classical simple Lie algebras. We thus are able to obtain the index of some interesting families of seaweed subalgebras and to give new examples of large classes of Frobenius Lie algebras among them.
\end{abstract}

\selectlanguage{french} 
\begin{abstract}
On g\'en\'eralise les r\'esultats de \cite{MB1} en donnant un algorithme de r\'eduction pour le calcul de l'indice des sous-alg\`ebres biparaboliques d'une alg\`ebre de Lie simple classique qui permet d'obtenir l'indice de certaines classes int\'eressantes de ces sous-alg\`ebres et d'en d\'eduire en particulier celles parmi elles qui sont des sous-alg\`ebres de Frobenius.  
\end{abstract}

\section{Introduction}
Soit $\mathfrak{g}$ l'alg\`ebre de Lie d'un groupe de Lie alg\'ebrique complexe $\mathbold{G}$ et $\mathfrak{g}^{\ast}$ son dual. Pour $f\in \mathfrak{g}^{*}$, on note  
$\mathfrak{g}_{f}$ le stabilisateur de $f$ pour l'action coadjointe. On appelle indice de $\mathfrak{g}$ et on note $\chi[\mathfrak{g}]$ la dimension minimale de $\mathfrak{g}_{f}$ lorsque $f$ parcourt $\mathfrak{g}^{\ast}$. Si $\chi[\mathfrak{g}]=0$, $\mathfrak{g}$ est dite une alg\`ebre de Frobenius.\\

Dans toute la suite, les groupes et les alg\`ebres de Lie consid\'er\'es sont alg\'ebriques d\'efinis sur le corps des complexes. Pour toute paire $(r,s)$ d'entiers naturels, on note $r[s]$ le reste de la division euclidienne de $r$ par $s$ et $r \wedge s$ le plus grand commun diviseur de $r$ et $s$. Pour $\underline{a}=(a_{1},\ldots,a_{k})\in\mathbb{N}^{k}$, on pose $|\underline{a}|:=a_{1}+\cdots+a_{k}$. \\

Soient $n\in\mathbb{N}^{\times},\;\underline{a}=(a_{1},\ldots,a_{k})$ et
  $\underline{b}=(b_{1},\ldots,b_{t})$ deux compositions v\'erifiant $|\underline{a}|\leq n$ et $|\underline{b}|\leq n$. On associe \`a la paire $(\underline{a},\underline{b})$ une unique sous-alg\`ebre biparabolique de $\mathfrak{sp}(2n)$ (resp. $\mathfrak{so}(2n+1)$, $\mathfrak{so}(2n)$) que l'on note $\mathfrak{q}^{C}_{n}(\underline{a}\mid\underline{b})$ (resp. $\mathfrak{q}^{B}_{n}(\underline{a}\mid\underline{b})$, $\mathfrak{q}^{D}_{n}(\underline{a}\mid\underline{b})$), et toutes les sous-alg\`ebres biparaboliques de $\mathfrak{sp}(2n)$ (resp. $\mathfrak{so}(2n+1)$, $\mathfrak{so}(2n)$) sont ainsi obtenues \`a conjugaison pr\`es par le groupe adjoint connexe de $\mathfrak{sp}(2n)$ (resp. $\mathfrak{so}(2n+1)$, $\mathfrak{so}(2n)$). Cependant, dans le cas de type $D$, comme expliqu\'e dans la section 4, on peut supposer que, si $|\underline{a}|=n$ (resp.  $|\underline{b}|=n$), alors $a_{k}>1$ (resp. $b_{t}>1$). Si $|\underline{a}|=|\underline{b}|= n$, on associe \`a la paire $(\underline{a},\underline{b})$ une unique sous-alg\`ebre biparabolique de $\mathfrak{gl}(n)$ (\`a conjugaison pr\`es par le groupe adjoint connexe de $\mathfrak{gl}(n)$) que l'on note $\mathfrak{q}^{A}(\underline{a}\mid\underline{b})$ (voir \cite{MB}, \cite{meander.C} et \cite{meander.D}).\\
 Pour $\underline{a}=(a_{1},\ldots,a_{k})\in\mathbb{N}^{k}$ et
  $\underline{b}=(b_{1},\ldots,b_{t})\in\mathbb{N}^{t}$ tels que $|\underline{a}|\leq n$ et $|\underline{b}|\leq n$, on d\'esigne par $\tilde{\underline{a}}$ (resp. $\tilde{\underline{b}}$) la suite obtenue de $\underline{a}$ (resp. $\underline{b}$) en supprimant les termes nuls et on convient que $\mathfrak{q}^{A}(\underline{a}\mid\underline{b})=\mathfrak{q}^{A}(\tilde{\underline{a}}\mid\tilde{\underline{b}})$ si $|\underline{a}|=|\underline{b}|=n$  et $\mathfrak{q}^{I}_{n}(\underline{a}\mid\underline{b})=\mathfrak{q}^{I}_{n}(\tilde{\underline{a}}\mid\tilde{\underline{b}})$, $I=B,\;C\;ou\;D$.\\
  
 Pour $n >1$, soit $\Xi_{n}$ l'ensemble des paires de compositions $(\underline{a}=(a_{1},\ldots,a_{k}),\underline{b}=(b_{1},\ldots,b_{t}))$ qui v\'erifient : $|\underline{a}|=n$, $|\underline{b}|=n-1$ et $a_{k}>1$ ou $|\underline{b}|=n$, $|\underline{a}|=n-1$ et $b_{t}>1$. \\

Dans \cite{D.K}, Dergachev et Kirillov associent \`a chaque sous-alg\`ebre biparabolique $\mathfrak{q}^{A}(\underline{a}\mid\underline{b})$ de $\mathfrak{gl}(n)$ un graphe appel\'e m\'eandre de $\mathfrak{q}^{A}(\underline{a}\mid\underline{b})$ et not\'e $\Gamma^{A}(\underline{a}\mid\underline{b})$. Il est construit de la mani\`ere suivante : on place $n$ points cons\'ecutifs, appel\'es sommets de $\Gamma^{A}(\underline{a}\mid\underline{b})$ et num\'erot\'es de $1$ \`a $n$, sur une droite horizontale, puis on relie au dessous (resp. au dessus) de cette droite par un arc toute paire de sommets distincts de la forme $(a_{1}+\cdots+a_{i-1}+j_{i},a_{1}+\cdots+a_{i}-j_{i}+1),\;1\leq j_{i}\leq a_{i},\;1\leq i\leq k$ (resp. $(b_{1}+\cdots+b_{i-1}+j_{i},b_{1}+\cdots+b_{i}-j_{i}+1),\;1\leq j_{i}\leq b_{i},\;1\leq i\leq t$). Au moyen des composantes connexes de ce graphe, qui sont des cycles et des segments, les auteurs d\'ecrivent  l'indice de $\mathfrak{q}^{A}(\underline{a}\mid\underline{b})$ (voir th\'eor\`eme \ref{thm1}). Ce r\'esultat a \'et\'e g\'en\'eralis\'e au cas des sous-alg\`ebres biparaboliques de $\mathfrak{sp}(2n)$ de deux mani\`eres ind\'ependantes dans \cite{C.meanders} et \cite{meander.C}, et au cas des sous-alg\`ebres biparaboliques de $\mathfrak{so}(n)$ dans \cite{meander.C} et \cite{meander.D}. Dans \cite{meander.C} et \cite{meander.D}, les auteurs  associent \`a la sous-alg\`ebre biparabolique $\mathfrak{q}^{I}_{n}(\underline{a}\mid\underline{b})$ un m\'eandre not\'e $\Gamma^{I}_{n}(\underline{a}\mid\underline{b})$,  o\`u $\;I=B,\;C\;ou\;D$. Lorsque $I=B,\;C\;ou\;I=D$ et $(\underline{a}\mid\underline{b})\notin\Xi_{n}$, le m\'eandre $\Gamma^{I}_{n}(\underline{a}\mid\underline{b})$ v\'erifie $\Gamma^{I}_{n}(\underline{a}\mid\underline{b})=\Gamma^{A}(a_{1},\ldots,a_{k},2(n-|\underline{a}|),a_{k},\ldots,a_{1}\mid b_{1},\ldots,b_{t},2(n-|\underline{b}|),b_{t},\ldots,b_{1} )$. Lorsque $I=D$ et $(\underline{a}\mid\underline{b})\in\Xi_{n}$, le m\'eandre $\Gamma^{D}_{n}(\underline{a}\mid\underline{b})$ poss\`ede deux arcs qui se croisent (voir section 4).\\

Dans \cite{C.meanders}, les auteurs donnent une formule de l'indice de la sous-alg\`ebre biparabolique $\mathfrak{q}_{n}^{C}(a,b\mid c)$ lorsque $|a+b-c|=1$ ou $2$ leur permettant de d\'eterminer la famille des sous--alg\`ebres biparaboliques de Frobenius qui sont de la forme $\mathfrak{q}_{n}^{C}(a,b\mid c),\;(a,b,c)\in(\mathbb{N}^{\times})^{3}$. Dans ce travail, nous donnons une formule de l'indice des sous-alg\`ebres biparaboliques de $\mathfrak{sp}(2n)$ (resp. $\mathfrak{so}(2n+1)$, $\mathfrak{so}(2n))$ qui sont de la forme $\mathfrak{q}_{n}^{C}(a,b\mid c)$ (resp. $\mathfrak{q}_{n}^{B}(a,b\mid c)$, $\mathfrak{q}_{n}^{D}(a,b\mid c)$), pour tout $(a,b,c)\in(\mathbb{N}^{\times})^{3}$ (resp. $(a,b,c)\in(\mathbb{N}^{\times})^{3}$, $(a,b,c)\in(\mathbb{N}^{\times})^{3}$ et $b>1$ si $a+b=n$) (voir th\'eor\`emes \ref{I3} et \ref{I'3}). Plus pr\'ecis\'ement, nous montrons le th\'eor\`eme suivant : 

\begin{theo} Soient $a,b,c,n\in\mathbb{N}^{\times}$ tels que $s:=\max(a+b,c)\leq n$. On pose $p=(a+b)\wedge(b+c)$ et $r=|a+b-c|$. Alors

 \begin{itemize}
\item[1)] \begin{itemize}
\item[i)]Si $p>r$, on a $\chi(\mathfrak{q}^{B}_{n}(a,b\mid c))=\chi[\mathfrak{q}^{C}_{n}(a,b\mid c)]=p-r+[\frac{r}{2}]+n-s$
\item[ii)] Si $p\leq r$, on a \\
$\chi[\mathfrak{q}^{B}_{n}(a,b\mid c)]=\chi[\mathfrak{q}^{C}_{n}(a,b\mid c)]=\begin{cases}[\frac{r}{2}]+n-s\;\text{si $p$ et $r$ sont de m\^eme pari\'et\'e}\\
[\frac{r}{2}]-1+n-s\;\text{sinon}\\
\end{cases}$\\
\end{itemize}
\item[2)] Soit $\Gamma^{D}_{n}(a,b\mid c)$ le m\'eandre de $\mathfrak{q}^{D}_{n}(a,b\mid c)$, Alors
\begin{itemize}
\item[i)] Si $((a,b),c)\notin\Xi_{n}$, on a $\chi[\mathfrak{q}^{D}_{n}(a,b\mid c)]=\chi[\mathfrak{q}^{C}_{n}(a,b\mid c)]+\epsilon$, o\`u $\epsilon$ est donn\'e par:$\\$
$ \epsilon=\begin{cases} 0\;\;\;\;si\;r\;\text{est un entier pair} \\
1\;\;\;\;si\;r\;\text{est un entier impair},\;s=n\;\text{et de plus l'arc de }\Gamma^{D}_{n}(a,b\mid c)\;\text{joignant les }\\
 \;\;\;\;\;\;sommets \;n\;et\;n+1\;est\; \text{un arc d'un segment de}\;\Gamma^{D}_{n}(a,b\mid c)\\
-1\;\text{dans les cas restants}\\
\end{cases}$\\
\item[ii)] Si $((a,b),c)\in\Xi_{n}$, on a 
$\chi[\mathfrak{q}_{n}^{D}(a,b\mid c)]=|(a\wedge n)-2|$
\end{itemize}
\end{itemize}
\end{theo}

En particulier, nous caract\'erisons les alg\`ebres de Frobenius de cette famille (voir corollaires \ref{J3} et \ref{J'3}).\\

Pour une sous-alg\`ebre biparabolique $\mathfrak{q}^{A}(\underline{a}\mid\underline{b})$ de $\mathfrak{gl}(n)$, on pose \\
$\Psi[\mathfrak{q}^{A}(\underline{a}\mid\underline{b})]=\begin{cases}\chi[\mathfrak{q}^{A}(\underline{a}\mid\underline{b})]\;\text{si le sommet $n$ appartient \`a un segment de }\Gamma^{A}(\underline{a}\mid\underline{b})\\
\chi[\mathfrak{q}^{A}(\underline{a}\mid\underline{b})]-2\;\text{sinon}\\
\end{cases}$\\

Dans \cite{MB1}, nous avons donn\'e un algorithme de r\'eduction permettant le calcul de l'indice des sous-alg\`ebres biparaboliques $\mathfrak{q}^{A}(\underline{a},\underline{b})$ de $\mathfrak{gl}(n)$. Dans cet article, nous g\'en\'eralisons ce r\'esultat au cas des sous-alg\`ebres biparaboliques $\mathfrak{q}^{C}_{n}(\underline{a},\underline{b})$ (resp. $\mathfrak{q}^{B}_{n}(\underline{a},\underline{b})$, $\mathfrak{q}^{D}_{n}(\underline{a},\underline{b})$) de $\mathfrak{sp}(2n)$ (resp. $\mathfrak{so}(2n+1)$, $\mathfrak{so}(2n)$) (voir th\'eor\`emes \ref{thm0'}, \ref{thm r1} et \ref{thm r2}). Nous montrons d'abord qu'on peut se ramener au cas $\underline{a}=(t)$ et $|\underline{b}|\leq t\leq n$, $t\in\mathbb{N}^{\times}$. Ensuite, nous donnons les deux th\'eor\`emes suivants  

 \begin{theo} Soient $t\in\mathbb{N}^{\times}$ et $\underline{a}=(a_{1},\ldots,a_{k})$ une composition v\'erifiant $|\underline{a}|\leq t\leq n$. On pose $a_{k+1}=t-|\underline{a}|$ et $d_{i}=(a_{1}+\dots a_{i-1})-(a_{i+1}+\dots+a_{k+1}),\;1\leq i\leq k$. Soit $\mathfrak{q}_{n}(t\mid\underline{a}):=\mathfrak{q}^{B}_{n}(t\mid\underline{a}),\;\mathfrak{q}^{C}_{n}(t\mid\underline{a})$ ou $\mathfrak{q}_{n}(t\mid\underline{a}):=\mathfrak{q}^{D}_{n}(t\mid\underline{a})$ si $(t\mid\underline{a})\notin\Xi_{n}$.
 \begin{itemize}
\item[1)] Pour tout $\;1\leq i\leq k$ tel que $d_{i}\neq 0$ et tout $\alpha\in\mathbb{Z}$ tel que $a_{i}+\alpha|d_{i}|\geq 0$, on a 
$$\chi[\mathfrak{q}_{n}(t\mid\underline{a})]=\chi[\mathfrak{q}_{n+\alpha |d_{i}|}(t+\alpha |d_{i}|\mid a_{1},\ldots,a_{i-1},a_{i}+ \alpha |d_{i}|,a_{i+1},\ldots,a_{k})]$$
En particulier, on a 
$$\chi[\mathfrak{q}_{n}(t\mid\underline{a})]=\chi[\mathfrak{q}_{n-a_{i}+a_{i}[|d_{i}|]}(t-a_{i}+a_{i}[|d_{i}|]\mid a_{1},\ldots,a_{i-1},a_{i}[|d_{i}|],a_{i+1},\ldots,a_{k})]$$ 
\item[2)] Pour tout $\;1\leq i\leq k$ tel que $d_{i}=0$, on a 
$$\chi[\mathfrak{q}_{n}(t\mid\underline{a})]=a_{i}+\chi[\mathfrak{q}_{n-a_{i}}(t-a_{i}\mid a_{1},\ldots,a_{i-1},a_{i+1}\ldots,a_{k})]$$
\end{itemize}
\end{theo}

\begin{theo}\label{thm r2}
Soient $\underline{a}=(a_{1},\ldots,a_{k})$ une composition v\'erifiant $1\leq |\underline{a}|=n-1$ $(i.e \;(n\mid\underline{a})\in\Xi_{n})$. On pose $\underline{a}^{'}=(a^{'}_{1},\ldots,a^{'}_{k})=(a_{1},\ldots,a_{k-1},a_{k}+1)$, $d_{k}=-(a^{'}_{1}+\cdots+a^{'}_{k-1})$ et
$d_{i}=(a^{'}_{1}+\cdots+a^{'}_{i-1})-(a^{'}_{i+1}+\cdots+a^{'}_{k})$, $ 1\leq i\leq k-1$.
\begin{itemize}
\item[1)] Pour tout $1\leq i\leq k$ tel que $d_{i}\neq 0$ et tout $\alpha\in\mathbb{Z}$ tel que $a^{'}_{i}+\alpha|d_{i}|\geq 0$, on a 
$$\chi[\mathfrak{q}^{D}_{n}(n\mid a_{1},\ldots,a_{k})]=\Psi[\mathfrak{q}^{A}(n+\alpha|d_{i}|\mid a^{'}_{1},\ldots,a^{'}_{i-1},a^{'}_{i}+\alpha|d_{i}|,a^{'}_{i+1},\ldots,a^{'}_{k})]$$
En particulier, si on pose $t_{i}=a^{'}_{i}-a^{'}_{i}[|d_{i}|]$, on a 
$$\chi[\mathfrak{q}^{D}_{n}(n\mid a_{1},\ldots,a_{k})]=\Psi[\mathfrak{q}^{A}(n-t_{i}\mid a^{'}_{1},\ldots,a^{'}_{i-1},a^{'}_{i}[|d_{i}|],a^{'}_{i+1}\ldots,a^{'}_{k})]$$
\item[2)] Pour tout $1\leq i\leq k$ tel que $d_{i}=0$, on a 
$$\chi[\mathfrak{q}^{D}_{n}(n\mid a_{1},\ldots,a_{k})]=a_{i}+\Psi[\mathfrak{q}^{A}(n-a_{i}\mid a_{1},\ldots,a_{i-1},a_{i+1}\ldots,a_{k})]$$
\end{itemize}
\end{theo}

 Comme cons\'equence de ces deux th\'eor\`emes, nous donnons de nouvelles familles de sous-alg\`ebres biparaboliques de $\mathfrak{sp}(2n)$ (resp. $\mathfrak{so}(2n+1)$, $\mathfrak{so}(2n)$) qui sont de Frobenius (voir lemme \ref{lem F} et th\'eor\`eme \ref{thm a}). Enfin, nous montrons que si $\underline{a}=(a_{1},\ldots,a_{m})$ et $\underline{b}=(b_{1},\ldots,b_{t})$ sont deux compositions de $n$ telles que $\mathfrak{q}_{s}^{A}(\underline{a}\mid\underline{b}):=\mathfrak{q}^{A}(\underline{a}\mid\underline{b})\cap\mathfrak{sl}(n)$ est une sous-alg\`ebre de Frobenius de $\mathfrak{sl}(n)$ (i.e, $\chi[\mathfrak{q}^{A}(\underline{a}\mid\underline{b})]=1$), alors les alg\`ebres $\mathfrak{q}^{D}_{2n}(2a_{1},\ldots,2a_{m}\mid 2b_{1},\ldots,2b_{t-1},2b_{t}-1)$ et $\mathfrak{q}^{D}_{2n}(2a_{1},\ldots,2a_{m-1},2a_{m}-1\mid 2b_{1},\ldots,2b_{t})$
sont deux sous-alg\`ebres de Frobenius de $\mathfrak{so}(4n)$, et toutes les sous-alg\`ebres de Frobenius de $\mathfrak{so}(2n)$ qui sont de la forme $\mathfrak{q}^{D}_{n}(\underline{a}\mid\underline{b})$ o\`u $(\underline{a}\mid\underline{b})\in\Xi_{n}$ sont ainsi obtenues (voir th\'eor\`eme \ref{th B}). En particulier, pour tout $n\geq 1$ et pour toute paire de compositions $(\underline{a}\mid\underline{b})\in\Xi_{2n+1}$, $\mathfrak{q}^{D}_{2n+1}(\underline{a}\mid\underline{b})$ n'est pas une sous-alg\`ebre de Frobenius.\\

Je remercie les professeurs Pierre Torasso et Mohamed Salah khalgui pour d'utiles conversations qui m'ont aid\'e dans ce travail. 

\section{Rappels}

Soit $\mathbold{G}$ un groupe de Lie alg\'ebrique complexe, $\mathfrak{g}$ son alg\`ebre
de Lie et $\mathfrak{g}^{\ast}$ le dual de $\mathfrak{g}$. Au moyen de la repr\'{e}sentation coadjointe, $\mathfrak{g}$ et $\mathbold{G}$ op\`{e}rent dans
$\mathfrak{g}^{*}$ par :
$$(x.f)(y)=f([y,x]), \;\; \forall x,y\in \mathfrak{g}\;\text{et}\;f\in
\mathfrak{g}^{*}$$
$$(x.f)(y)=f(\Ad x^{-1}y),\;\; \forall x\in\mathbold{G},y\in \mathfrak{g}\;\text{et}\;f\in
\mathfrak{g}^{*} $$ 
Pour $f\in \mathfrak{g}^{*}$, soit $\mathbold{G}_{f}$ le stabilisateur de $f$ pour cette action et $\mathfrak{g}_{f}$ son alg\`ebre de Lie:
$$\mathbold{G}_{f}=\{x\in \mathbold{G};f(\Ad x^{-1}y)=f(y),\;\; \forall y\in
\mathfrak{g}\}$$ 
  
$$\mathfrak{g}_{f}=\{x\in \mathfrak{g};f([x,y])=0,\;\; \forall y\in
\mathfrak{g}\}$$ 
On appelle indice de $\mathfrak{g}$ l'entier
$\chi[\mathfrak{g}]$ d\'{e}fini par:
$$\chi[\mathfrak{g}]=\min\{\dim \mathfrak{g}_{f}\;,\;f\in
\mathfrak{g}^{*}\}$$
Si $\chi[\mathfrak{g}]=0$, $\mathfrak{g}$ est dite une alg\`ebre de Frobenius.$\\$

Supposons $\mathfrak{g}$ une alg\`ebre de Lie semi-simple. Soit $\mathfrak{h}$ une sous-alg\`ebre de Cartan de $\mathfrak{g}$, $\Delta\subset\mathfrak{h}^{*}$ le syst\`eme de racines de $\mathfrak{g}$ relativement \`a $\mathfrak{h}$, $\pi:=\lbrace \alpha_{1},\ldots,\alpha_{n}\rbrace$ une base de racines simples num\'erot\'ee, dans le cas o\`u $\mathfrak{g}$ est simple, conform\'ement \`a Bourbaki \cite{Bourbaki}. Alors $\Delta=\Delta^{+}\cup\Delta^{-}$ o\`u $\Delta^{+}$  est l'ensemble des racines positives relativement \`a $\pi$ et $\Delta^{-}=-\Delta^{+}$. Pour $\alpha\in\Delta$, soit $\mathfrak{g}_{\alpha}:=\lbrace x\in\mathfrak{g};\;[h,x]=\alpha(h)x,\;h\in\mathfrak{h}\rbrace$, $\mathfrak{g}_{\alpha}$ est de dimension un.\\
Pour toute partie $\pi^{'}\subset\pi$, soient $\Delta^{+}_{\pi^{'}}=\Delta^{+}\cap\mathbb{N}\pi^{'}$ o\`u $\mathbb{N}\pi^{'}$ d\'esigne l'ensemble des combinaisons lin\'eaires des \'el\'ements de $\pi^{'}$ \`a coefficients dans $\mathbb{N}$, $\Delta^{-}_{\pi^{'}}=-\Delta^{+}_{\pi^{'}}$ 
et $\mathfrak{n}^{\pm}_{\pi^{'}}=\oplus_{\alpha\in\Delta^{\pm}_{\pi^{'}}}\mathfrak{g}_{\alpha}$.\\
\begin{defis}
Soit $(\pi^{'},\pi^{''})$ une paire de parties de $\pi$, la sous-alg\`ebre $\mathfrak{q}_{\pi^{'},\pi^{''}}:=\mathfrak{n}^{+}_{{\pi}^{'}}\oplus \mathfrak{h} \oplus\mathfrak{n}^{-}_{\pi^{''}}$  
est appel\'ee sous-alg\`ebre biparabolique standard de $\mathfrak{g}$.\\
On appelle sous-alg\`ebre biparabolique de $\mathfrak{g}$ toute sous-alg\`ebre de $\mathfrak{g}$ conjugu\'ee par $\mathbold{G}$ \`a une sous-alg\`ebre biparabolique standard de $\mathfrak{g}$.\\
 Si $\pi^{'}=\pi$ ou $\pi^{''}=\pi$,  toute sous-alg\`ebre de $\mathfrak{g}$ conjugu\'ee par $\mathbold{G}$ \`a $\mathfrak{q}_{\pi^{'},\pi^{''}}$ est dite sous-alg\`ebre parabolique de $\mathfrak{g}$.
\end{defis}

Soit $\pi^{'}=\{\alpha_{i_{1}},\ldots,\alpha_{i_{k}}\}$ une partie de $\pi$, on pose $\mathcal{S}_{\pi^{'}}:=(i_{1},i_{2}-i_{1},\ldots,i_{k}-i_{k-1},n+1-i_{k})$, $\mathcal{T}_{\pi^{'}}:=(i_{1},i_{2}-i_{1},\ldots,i_{k}-i_{k-1})$ et on convient que $\mathcal{S}_{\varnothing}=(n+1)$ et $\mathcal{T}_{\varnothing}=\varnothing$. Alors $\mathcal{S}_{\pi^{'}}$ est une composition de $n+1$, $\mathcal{T}_{\pi^{'}}$ est une composition d'un entier $t\leq n$.

 Supposons $\mathfrak{g}=\mathfrak{sl}(n+1)$, on associe \`a toute paire $(\underline{a},\underline{b})$ de compositions  de $n+1$ la sous-alg\`ebre biparabolique $\mathfrak{q}^{A}_{s}(\underline{a}\mid\underline{b}):=\mathfrak{q}_{\pi^{'},\pi^{''}}$ telle que $\underline{a}=\mathcal{S}_{\pi\backslash\pi^{'}}$ et $\underline{b}=\mathcal{S}_{\pi\backslash\pi^{''}}$. La sous-alg\`ebre $\mathfrak{q}^{A}(\underline{a}\mid\underline{b}):=\mathfrak{q}^{A}_{s}(\underline{a}\mid\underline{b})\oplus\mathbb{C}I_{n+1}$, $I_{n+1}$ \'etant la matrice identit\'e d'ordre $n+1$, est une sous-alg\`ebre biparabolique de $\mathfrak{gl}(n+1)$ qui v\'erifie $\chi[\mathfrak{q}^{A}(\underline{a}\mid\underline{b})]=\chi[\mathfrak{q}^{A}_{s}(\underline{a}\mid\underline{b})]+1$. Toutes les sous-alg\`ebres biparaboliques de $\mathfrak{gl}(n+1)$ sont ainsi obtenues (\`a conjugaison pr\`es par le groupe adjoint connexe de $\mathfrak{gl}(n+1)$). La sous-alg\`ebre $\mathfrak{q}^{A}(\underline{a}\mid\underline{b})$ est une sous-alg\`ebre parabolique de $\mathfrak{gl}(n+1)$ si et seulement si $\underline{a}=(n+1)$ ou $\underline{b}=(n+1)$. Soit $(e_{1},\ldots,e_{n})$ la
base canonique de $\mathbb{C}^{n}$, $\cur{V}=\{V_{0}=\{0\}\subsetneq
V_{1}\subsetneq\cdots\subsetneq V_{m}=\mathbb{C}^{n} \}$ et
$\cur{W}=\{\mathbb{C}^{n}=W_{0}\supsetneq W_{1}\supsetneq\cdots\supsetneq
W_{t}=\{0\}\}$ les drapeaux de sous-espaces tels que $V_{i}=\langle
e_{1},\ldots,e_{a_{1}+\cdots+a_{i}}\rangle$, $1\leq i\leq m$, et
$W_{i}=\langle e_{b_{1}+\cdots+b_{i}+1},\ldots,e_{n}\rangle$, $1\leq
i\leq t-1$. Alors $\mathfrak{q}^{A}(\underline{a}\vert\underline{b})$
est le stabilisateur dans $\mathfrak{gl}(n)$ de la paire de drapeaux
$(\cur{V},\cur{W})$.

Supposons $\mathfrak{g}=\mathfrak{sp}(2n)$ (resp. $\mathfrak{so}(2n+1)$, $\mathfrak{so}(2n)$), on associe \`a toute paire de compositions $(\underline{a},\underline{b})$ v\'erifiant $|\underline{a}|\leq n$ et $|\underline{b}|\leq n$, la sous-alg\`ebre biparabolique $\mathfrak{q}^{I}_{n}(\underline{a}\mid\underline{b}):=\mathfrak{q}_{\pi^{'},\pi^{''}},\;I=C$ (resp. $B,\;D$) de $\mathfrak{sp}(2n)$ (resp. $\mathfrak{so}(2n+1)$, $\mathfrak{so}(2n)$) telle que $\underline{a}=\mathcal{T}_{\pi\backslash\pi^{'}}$ et $\underline{b}=\mathcal{T}_{\pi\backslash\pi^{''}}$. \`A conjugaison pr\`es par le groupe adjoint connexe de $\mathfrak{sp}(2n)$ (resp. $\mathfrak{so}(2n+1)$, $\mathfrak{so}(2n)$), toutes les sous-alg\`ebres biparaboliques de $\mathfrak{sp}(2n)$ (resp. $\mathfrak{so}(2n+1)$, $\mathfrak{so}(2n)$) sont ainsi obtenues. La sous-alg\`ebre $\mathfrak{q}^{I}_{n}(\underline{a}\mid\underline{b}),\;I=C$ (resp. $B,\;D$) est une sous-alg\`ebre parabolique de $\mathfrak{sp}(2n)$ (resp. $\mathfrak{so}(2n+1)$, $\mathfrak{so}(2n)$) si et seulement si $\underline{a}=\varnothing$ ou $\underline{b}=\varnothing$. Soit encore
$(e_{1},\ldots,e_{n})$ la base canonique de $\mathbb{C}^{n}$. On munit
$\mathbb{C}^{n}$ de la forme bilin\'eaire $\langle\,,\rangle$ antisym\'etrique (cas
$\mathfrak{sp}(n)$ avec $n$ pair) (resp. sym\'etrique (cas
$\mathfrak{so}(n)$)) telle que $\langle
e_{i},e_{n+1-j}\rangle=\delta_{i,j}$ $1\leq i,j\leq n$, $i+j\leq n+1$.  Soit
$(\underline{a},\underline{b})$ une paire de compositions v\'erifiant
$\vert\underline{a}\vert\leq[\frac{n}{2}]$ et
$\vert\underline{b}\vert\leq[\frac{n}{2}]$,
$\cur{V}=\{V_{0}=\{0\}\subsetneq V_{1}\subsetneq\cdots\subsetneq V_{m} \}$ et
$\cur{W}=\{W_{0}\supsetneq W_{1}\supsetneq\cdots\supsetneq
W_{t}=\{0\}\}$ les drapeaux de sous-espaces isotropes tels que $V_{i}=\langle
e_{1},\ldots,e_{a_{1}+\cdots+a_{i}}\rangle$, $1\leq i\leq m$, et
$W_{i}=\langle e_{n-(b_{1}+\cdots+b_{t-i})+1},\ldots,e_{n}\rangle$, $0\leq
i\leq t-1$. Alors
$\mathfrak{q}_{[\frac{n}{2}]}^{C}(\underline{a}\vert\underline{b})$
(resp. $\mathfrak{q}_{[\frac{n}{2}]}^{I}(\underline{a}\vert\underline{b})$,
$I=B$, si $n$ est impair et $I=D$, si $n$ est pair) est le
stabilisateur dans $\mathfrak{sp}(n)$ (resp. $\mathfrak{so}(n)$) de la
paire de drapeaux $(\cur{V},\cur{W})$.\\

 Soit $\underline{a}=(a_{1},\dots, a_{k})$ une composition d'un entier $n\in\mathbb{N}^{\times}$, on pose $I_{i}=[a_{1}+\dots+a_{i-1}+1,\ldots,a_{1}+\dots+a_{i-1}+a_{i}]\cap\mathbb{N},\;1\leq i\leq k$ et on associe \`a la composition $\underline{a}$, l'involution $\theta_{\underline{a}}$ de $\{1,\ldots,n\}$, d\'efinie par $\theta_{\underline{a}}(x)=2(a_{1}+\dots+a_{i-1})+a_{i}-x+1$, $x\in I_{i},1\leq i\leq k\;$.\\
 
 Soit $(\underline{a},\underline{b})$ une paire de compositions d'un entier $n\in\mathbb{N}^{\times}$, On associe \`a la sous-alg\`ebre biparabolique $\mathfrak{q}^{A}(\underline{a}\mid \underline{b})$ de $\mathfrak{gl}(n)$ un graphe not\'e $\Gamma^{A}(\underline{a}\mid\underline{b})$ et appel\'e m\'eandre de $\mathfrak{q}^{A}(\underline{a}\mid \underline{b})$, dont les sommets sont $n$ points cons\'{e}cutifs situ\'es sur une droite  horizontale D et num\'erot\'es $1,2,\ldots,n$. Il est construit de la mani\`ere suivante: on relie par un arc au dessous (resp. au dessus) de la droite D toute paire de sommets distincts de $\Gamma(\underline{a}\mid\underline{b})$ de la forme $(x,\theta_{\underline{a}}(x))$ (resp.$(x,\theta_{\underline{b}}(x))$), $x\in\{1,\ldots,n\}$. Une composante connexe de $\Gamma^{A}(\underline{a}\mid\underline{b})$ est soit un cycle, soit un segment (voir\cite{MB}).
 $\\$
\begin{Ex}\label{Ex6.1}
 $\Gamma^{A}(2,4,3\mid5,2,2)$=
{\setlength{\unitlength}{0.021in}
\raisebox{-12\unitlength}{%
\begin{picture}(100,20)(-1,-11)
\multiput(0,3)(10,0){9}{\circle*{2}}
\put(20,4){\oval(40,20)[t]}
\put(20,4){\oval(20,10)[t]}
\put(55,4){\oval(10,5)[t]}
\put(75,4){\oval(10,5)[t]}
\put(5,1){\oval(10,5)[b]}
\put(35,1){\oval(30,15)[b]}
\put(35,1){\oval(10,5)[b]}
\put(70,1){\oval(20,10)[b]}
\end{picture}
} }
\end{Ex}

\begin{theo}\label{thm1}\cite{D.K} Soient $\mathfrak{q}^{A}(\underline{a}\mid\underline{b})$ une sous-alg\`ebre biparabolique de $\mathfrak{gl}(n)$ et $\Gamma^{A}(\underline{a}\mid\underline{b})$ son m\'eandre, on a : 
$$\chi[\mathfrak{q}^{A}(\underline{a}\mid\underline{b})]=2\times (nombre\;de\;cycles)+nombre\;de\;segments$$
\end{theo}

\begin{lem}\cite{MB1}\label{lem0}
Soient $\underline{a}=(a_{1},\dots, a_{k})$ et $\underline{b}=(b_{1},\ldots,b_{t})$ deux compositions telles que $|\underline{b}|=|\underline{a}|=n$. On pose $\underline{a}^{-1}=(a_{k},\dots, a_{1})$. On a :
$$\chi[\mathfrak{q}^{A}(\underline{a}\mid\underline{b})]=\chi[\mathfrak{q}^{A}(2n\mid\underline{a}^{-1},\underline{b})]$$
\end{lem}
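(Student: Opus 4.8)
The plan is to pass to meanders and deduce the equality from Theorem~\ref{thm1}, by exhibiting an explicit isomorphism of multigraphs. Put $\Gamma := \Gamma^{A}(2n\mid\underline{a}^{-1},\underline{b})$, a meander on the $2n$ vertices $1,\dots,2n$. Its arcs below the line come from the first composition $(2n)$, i.e.\ from the involution $\theta_{(2n)}\colon x\mapsto 2n+1-x$; since $2n$ is even this involution is fixed-point free, so these lower arcs form the single nested ``rainbow'' $M=\{\{i,\,2n+1-i\}:1\le i\le n\}$, a perfect matching of the vertex set. The arcs above the line come from $(\underline{a}^{-1},\underline{b})$: those produced by $\underline{a}^{-1}=(a_{k},\dots,a_{1})$ stay inside the first block $\{1,\dots,n\}$, and those produced by $\underline{b}$ stay inside the second block $\{n+1,\dots,2n\}$.

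First I would describe the quotient multigraph $\Gamma/M$ obtained by contracting every edge of $M$. Its vertices are the $n$ pairs $v_{i}=\{i,\,2n+1-i\}$, $1\le i\le n$, and its edges are the upper arcs of $\Gamma$, an arc $\{p,q\}$ becoming the edge between $v_{p'}$ and $v_{q'}$ where $p'=\min(p,2n+1-p)$ and $q'=\min(q,2n+1-q)$. Relabelling the vertex $v_{i}$ by $n+1-i$, a direct bookkeeping with the block boundaries of $\theta_{\underline{a}}$, $\theta_{\underline{b}}$ and $\theta_{(\underline{a}^{-1},\underline{b})}$ shows that the upper arcs coming from $\underline{b}$ (inside the second block) become exactly the $\underline{b}$-arcs of $\Gamma^{A}(\underline{a}\mid\underline{b})$, while the upper arcs coming from $\underline{a}^{-1}$ (inside the first block) become exactly the $\underline{a}$-arcs of $\Gamma^{A}(\underline{a}\mid\underline{b})$; here reversing $\underline{a}$ into $\underline{a}^{-1}$ is precisely what makes the central reflection $x\mapsto n+1-x$ carry one family of arcs onto the other. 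Hence $\Gamma/M\cong\Gamma^{A}(\underline{a}\mid\underline{b})$ as multigraphs (indeed as meanders, the two colours matching up).

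Next I would verify that contracting the perfect matching $M$ changes neither the number of cycles nor the number of segments of $\Gamma$. Since every vertex of $\Gamma$ has degree at most $2$, $\Gamma$ is a disjoint union of paths and cycles, and since $M\subseteq E(\Gamma)$ is a perfect matching, its restriction to each connected component is again a perfect matching; in particular every path-component has an even number of vertices and every cycle-component an even length. Moreover $\Gamma$ has no bigon: a lower arc $\{i,2n+1-i\}$ has one endpoint in each block $\{1,\dots,n\}$, $\{n+1,\dots,2n\}$, whereas an upper arc lies within a single block, so no two arcs are parallel and every cycle-component of $\Gamma$ has length at least $4$. Now contracting the (unique, alternating) perfect matching of a path on $2\ell$ vertices yields a path on $\ell$ vertices (a single vertex if $\ell=1$), and contracting a perfect matching of a cycle of length $2\ell\ge 4$ yields a cycle of length $\ell\ge 2$; thus path-components map to path-components and cycle-components to cycle-components, and the two counts are preserved.

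Finally, combining the two steps would give that $\Gamma^{A}(\underline{a}\mid\underline{b})$ and $\Gamma^{A}(2n\mid\underline{a}^{-1},\underline{b})$ have the same number of cycles and the same number of segments, so Theorem~\ref{thm1} yields $\chi[\mathfrak{q}^{A}(\underline{a}\mid\underline{b})]=\chi[\mathfrak{q}^{A}(2n\mid\underline{a}^{-1},\underline{b})]$. The one genuinely delicate point is the middle step: one has to line up the explicit position formulas for the three involutions and check that the reversal of $\underline{a}$ is exactly the correction turning the central reflection into an isomorphism of meanders; once that is done, the rest is the elementary observation about how edge-contraction along a perfect matching acts on a disjoint union of paths and cycles.
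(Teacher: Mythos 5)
Your argument is correct. Note first that the paper does not actually prove Lemma~\ref{lem0}: it is quoted from \cite{MB1}. The closest proof in the text is that of the symplectic analogue, Lemma~\ref{lem0'}, which proceeds by observing that $\theta_{\underline{a}}$ and $\theta_{\underline{b}}$ are restrictions of conjugates $\theta_{n}\theta_{\underline{c}}\theta_{n}$, etc., and then invoking the symmetry $\sigma$ to produce a bijection between connected components --- in other words, a ``folding in half'' of the large meander. Your contraction of the rainbow matching $M$ is the same folding made precise in purely graph-theoretic terms, and it buys you a cleaner verification of the delicate points: the absence of bigons (a lower arc joins the two halves while an upper arc stays in one half, so no cycle degenerates to a loop after contraction), the fact that $M$ restricts to a perfect matching of each component (forcing even lengths), and the fact that contraction sends paths on $2\ell$ vertices to paths on $\ell$ vertices and cycles of length $2\ell\ge 4$ to cycles of length $\ell\ge 2$, so both counts in Theorem~\ref{thm1} are preserved. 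The block bookkeeping showing that the reflection $x\mapsto n+1-x$ carries the $\underline{a}^{-1}$-blocks onto the $\underline{a}$-blocks (which is exactly why the reversal of $\underline{a}$ appears in the statement) is the one computation you rightly flag as the crux, and it checks out. So the proof is complete and is essentially the paper's (i.e.\ \cite{MB1}'s) folding argument in a different, somewhat more self-contained dress.
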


\begin{theo}\cite{MB1}\label{ind}
Soit $a,b,c,d\;et\;n\in\mathbb{N}^{\times}$ tels que $a+b=c+d=n$, on a 
\begin{itemize}
\item[1)]$\chi[\mathfrak{q}^{A}(a,b\mid n)]=a\wedge b$
\item[2)]$\chi[\mathfrak{q}^{A}(a,b\mid c,d)]=\chi[\mathfrak{q}^{A}(a,b,c\mid n+c)]=(a+b)\wedge (b+c)$
\end{itemize}

\end{theo}

\begin{theo}\cite{MB1}\label{thm0}
Soit $\mathfrak{p}^{A}(a_{1},\ldots,a_{k}):=\mathfrak{q}^{A}(a_{1},\ldots,a_{k}\mid n)$ une sous-alg\`ebre
parabolique de $\mathfrak{gl}(n)$. On pose $d_{k}=-(a_{1}+\cdots+a_{k-1})$ et
$d_{i}=(a_{1}+\cdots+a_{i-1})-(a_{i+1}+\cdots+a_{k})$, $ 1\leq i\leq
k-1$.
\begin{itemize}
\item[1)] Pour tout $ 1\leq i\leq k$ tel que $d_{i}\neq 0$ et tout $\alpha\in\mathbb{Z}$ tel que $a_{i}+\alpha|d_{i}|\geq 0$, on a 
$$\chi[\mathfrak{p}^{A}(a_{1},\ldots,a_{k})]=\chi[\mathfrak{p}^{A}(a_{1},\ldots,a_{i}+\alpha|d_{i}|,\ldots,a_{k})]$$
En particulier, on a 
$$\chi[\mathfrak{p}^{A}(a_{1},\ldots,a_{k})]=\chi[\mathfrak{p}^{A}(a_{1},\ldots,a_{i-1},a_{i}[|d_{i}|],a_{i+1}\ldots,a_{k})]$$
\item[2)] Pour tout $ 1\leq i\leq k$ tel que $d_{i}=0$, on a 
$$\chi[\mathfrak{p}^{A}(a_{1},\ldots,a_{k})]=a_{i}+\chi[\mathfrak{p}^{A}(a_{1},\ldots,a_{i-1},a_{i+1}\ldots,a_{k})]$$
\end{itemize}
\end{theo}

\section{sous-alg\`ebres biparaboliques de $sp(2n)$}

Soient $n\in\mathbb{N}^{\times}$ et $(\underline{a},\underline{b})$ une paire de compositions v\'erifiant $|\underline{a}|\leq n$ et $|\underline{b}|\leq n$. On associe \`a la sous-alg\`ebre biparabolique $\mathfrak{q}^{C}_{n}(\underline{a}\mid \underline{b})$ de $\mathfrak{sp}(2n)$ le m\'eandre $\Gamma^{C}_{n}(\underline{a}\mid\underline{b}):=\Gamma^{A}(\underline{a}^{'}\mid\underline{b}^{'})$, o\`u $\underline{a}^{'}=(a_{1},\ldots,a_{k},2(n-|\underline{a}|),a_{k},\ldots,a_{1})$ et $\underline{b}^{'}=(b_{1},\ldots,b_{t},2(n-|\underline{b}|),b_{t},\ldots,b_{1})$. $\underline{a}^{'}$ et $\underline{b}^{'}$ sont deux compositions de $2n$, le nombre de sommets de $\Gamma^{C}_{n}(\underline{a}\mid\underline{b})$ est alors \'egal \`a $2n$. Soit $\sigma$ la sym\'etrie par rapport \` a la droite verticale passant par le milieu des sommets $n$ et $n+1$. Par construction, le m\'eandre $\Gamma^{C}_{n}(\underline{a}\mid\underline{b})$ est invariant par $\sigma$. Un cycle (resp. segment) X est dit invariant si $\sigma(X)=X$.
\begin{Ex}
\medskip$\\$
$\Gamma^{C}_{8}(2,5\mid 1,4)={\setlength{\unitlength}{0.021in}
\raisebox{-12\unitlength}{%
\begin{picture}(100,20)(-10,-10)
\multiput(0,3)(15,0){16}{\circle*{2}}
\put(37,5){\oval(45,20)[t]}
\put(37,5){\oval(15,10)[t]}
\put(112,5){\oval(75,30)[t]}
\put(112,5){\oval(45,20)[t]}
\put(112,5){\oval(15,10)[t]}
\put(187,5){\oval(45,20)[t]}
\put(187,5){\oval(15,10)[t]}

\put(7,1){\oval(15,10)[b]}
\put(60,1){\oval(60,20)[b]}
\put(60,1){\oval(30,10)[b]}
\put(112,1){\oval(15,10)[b]}
\put(164,1){\oval(60,20)[b]}
\put(164,1){\oval(30,10)[b]}
\put(217,1){\oval(15,10)[b]}
\end{picture}
} }
\put(30,-25){\line(0,1){60}}
$
 \end{Ex}
 
\begin{theo}\label{thm2}\cite{meander.C} Soient $\mathfrak{q}^{C}_{n}(\underline{a}\mid\underline{b})$ une sous-alg\`ebre biparabolique de $\mathfrak{sp}(2n)$ et $\Gamma^{C}_{n}(\underline{a}\mid\underline{b})$ son m\'eandre, ona : 
$$\chi[\mathfrak{q}^{C}_{n}(\underline{a}\mid\underline{b})]=nombre\;de\;cycles\;+\frac{1}{2}\;(nombre\;de\;segments\;non\;invariants)$$
\end{theo}

\begin{cor}\label{cor1} 
Soient $\underline{a}=(a_{1},\dots, a_{k})$ et $\underline{b}=(b_{1},\ldots,b_{t})$ deux compositions telles que $|\underline{a}|\leq n$ et $|\underline{b}|\leq n$. 
\begin{itemize}
\item[1)]$\chi[\mathfrak{q}^{C}_{n}(\underline{a}\mid\underline{b})]=\chi[\mathfrak{q}^{C}_{n}(\underline{b}\mid\underline{a})]$
\item[2)]$\chi[\mathfrak{q}^{C}_{n}(\underline{a}\mid\underline{b})]=\chi[\mathfrak{q}^{C}_{\max(|\underline{a}|,|\underline{b}|)}(\underline{a}\mid\underline{b})]+n-\max(|\underline{a}|,|\underline{b}|)$
\item[3)]$\chi[\mathfrak{q}^{C}_{n}(\underline{a}\mid\varnothing)]=\sum_{1\leq i\leq k}[\frac{a_{i}}{2}]+(n-|\underline{a}|)$
\item[4)] S'il existe $1\leq i\leq  k$ et $1\leq j\leq  t$ tels que $a_{1}+\dots+a_{i}=b_{1}+\dots+b_{j}$, on a : $\chi[\mathfrak{q}^{C}_{n}(\underline{a}\mid\underline{b})]=\chi[\mathfrak{q}^{A}(a_{1},\ldots,a_{i}\mid b_{1},\ldots,b_{j})]+\chi[\mathfrak{q}^{C}_{n-(a_{1}+\dots+a_{i})}(a_{i+1},\ldots,a_{k}\mid b_{j+1},\ldots,b_{t})]$
o\`u $\mathfrak{q}^{A}(a_{1},\ldots,a_{i}\mid b_{1},\ldots,b_{j})$ est la sous-alg\`ebre biparabolique de $\mathfrak{gl}(a_{1}+\dots+a_{i})$ associ\'ee \`a la paire de compositions $((a_{1},\ldots,a_{i}),(b_{1},\ldots,b_{j}))$. En particulier, si $|\underline{a}|=|\underline{b}|$, alors \\
$\chi[\mathfrak{q}^{C}_{n}(\underline{a}\mid\underline{b})]=\chi[\mathfrak{q}^{A}(\underline{a}\mid\underline{b})]+\chi[\mathfrak{q}^{C}_{n-|\underline{a}|}(\varnothing\mid\varnothing)]=\chi[\mathfrak{q}^{A}(\underline{a}\mid\underline{b})]+n-|\underline{a}|$
 
\end{itemize}
\end{cor}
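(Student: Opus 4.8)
The four assertions all reduce, via Théorème \ref{thm1} and Théorème \ref{thm2}, to combinatorial statements about the meander $\Gamma^C_n(\underline a\mid\underline b) = \Gamma^A(\underline a'\mid\underline b')$ together with the symmetry $\sigma$. My plan is to treat each item by exhibiting the relevant connected components of the meander and keeping track of which segments are $\sigma$-invariant.

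\emph{Item 1).} Swapping $\underline a$ and $\underline b$ swaps the arcs drawn below the line $D$ with those drawn above it; this is a graph isomorphism of $\Gamma^C_n(\underline a\mid\underline b)$ onto $\Gamma^C_n(\underline b\mid\underline a)$ that commutes with $\sigma$ (the reflection $\sigma$ is defined purely in terms of vertex positions, not of over/under). Hence it preserves the number of cycles and the number of non-invariant segments, so the formula of Théorème \ref{thm2} gives the equality.

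\emph{Item 2).} Write $m=\max(|\underline a|,|\underline b|)\le n$. Passing from $\Gamma^C_m(\underline a\mid\underline b)$ to $\Gamma^C_n(\underline a\mid\underline b)$ amounts, in the $A$-picture, to replacing the two middle blocks $2(m-|\underline a|)$ and $2(m-|\underline b|)$ by $2(n-|\underline a|)$ and $2(n-|\underline b|)$: concretely one inserts $n-m$ new pairs of vertices symmetrically about the centre. I will check that each inserted pair, being joined to itself by an arc both above and below (when the corresponding side has $|\cdot|<m$) or nested minimally, contributes exactly one new $\sigma$-invariant segment and leaves all previously existing components unchanged up to obvious relabelling — more precisely, the inner structure of $\Gamma^C_m$ sits inside $\Gamma^C_n$ as the restriction to the non-new vertices. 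Since the side realising the max has its middle block equal to $0$ in $\Gamma^C_m$, one has to argue the insertion on that side only adds invariant segments too. By Théorème \ref{thm2}, $n-m$ invariant segments add $0$ to the index count from the segment term but — here is the subtlety — they could in principle be non-invariant; the key point to nail down is that the minimally-nested new arcs are genuinely each a self-paired vertex on one side, hence fixed by $\sigma$, so the added segments are invariant and the index grows by exactly $n-m$. This bookkeeping is the part I expect to be most delicate.

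\emph{Item 3).} When $\underline b=\varnothing$, the composition $\underline b'=(2n)$, so above the line every vertex $x$ is joined to $2n+1-x$: the arcs above form one big nested family identifying $x$ with its mirror image. Below the line the arcs come from $\underline a'=(a_1,\dots,a_k,2(n-|\underline a|),a_k,\dots,a_1)$. I will decompose the meander block by block: a block of size $a_i$ below the line, sitting inside the global mirror-pairing above, produces $[a_i/2]$ cycles (pairs that close up) plus, if $a_i$ is odd, a segment through the fixed vertex — but that segment is $\sigma$-invariant only if the block is the central one. Carefully: for an outer block $a_i$ (appearing twice, symmetrically), odd $a_i$ yields a segment on the left and its mirror on the right, i.e. two non-invariant segments counting $1$; even $a_i$ yields only cycles. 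The central block $2(n-|\underline a|)$ is even and contributes $n-|\underline a|$ cycles. Summing, using Théorème \ref{thm2}: cycles $=\sum_i[a_i/2]+(n-|\underline a|)$ plus $\tfrac12$ times (number of non-invariant segments), and I must verify the leftover segments cancel to give exactly $\sum[a_i/2]+(n-|\underline a|)$. The clean way is to instead invoke item 2) to reduce to $n=|\underline a|$ and then Corollaire 1 of \cite{MB1}-type parabolic formula, or simply Théorème \ref{thm0} applied to $\mathfrak p^A$; I would in fact derive 3) from 2) plus the $A$-type parabolic index $\chi[\mathfrak q^A(\underline a\mid |\underline a|)]$ computed by Théorème \ref{thm0}, together with the halving in Théorème \ref{thm2}.

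\emph{Item 4).} If $a_1+\cdots+a_i = b_1+\cdots+b_j =: \ell$, then in $\Gamma^C_n(\underline a\mid\underline b)$ no arc — above or below — joins a vertex in $\{1,\dots,\ell\}$ to one in $\{\ell+1,\dots,2n-\ell\}$, nor (by $\sigma$-symmetry) to one in $\{2n-\ell+1,\dots,2n\}$ across the far cut; the meander therefore splits as a disjoint union. The vertices $\{1,\dots,\ell\}\cup\{2n-\ell+1,\dots,2n\}$ with their arcs form exactly $\Gamma^C_n$ restricted there, but since these outer blocks come from the pair $((a_1,\dots,a_i),(b_1,\dots,b_j))$ reflected, their components are in $\sigma$-equivariant bijection with $\Gamma^A(a_1,\dots,a_i\mid b_1,\dots,b_j)$: a cycle of the $A$-meander appears either as one $\sigma$-invariant cycle or as a mirror pair of cycles, and a segment appears as a mirror pair of non-invariant segments; in either case Théorème \ref{thm2} assigns it the same number Théorème \ref{thm1} assigns the original $A$-component (invariant cycle $\to 1$; cycle-pair $\to 2 = 2\cdot 1$; segment-pair $\to 1$). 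Hence this part contributes $\chi[\mathfrak q^A(a_1,\dots,a_i\mid b_1,\dots,b_j)]$. The remaining vertices $\{\ell+1,\dots,2n-\ell\}$ carry precisely $\Gamma^C_{n-\ell}(a_{i+1},\dots,a_k\mid b_{j+1},\dots,b_t)$ (the middle blocks $2(n-|\underline a|)$, $2(n-|\underline b|)$ are unchanged and $\sigma$ restricts to the central reflection there), contributing $\chi[\mathfrak q^C_{n-\ell}(a_{i+1},\dots,a_k\mid b_{j+1},\dots,b_t)]$. Adding the two gives the claimed identity; the special case $|\underline a|=|\underline b|$ follows by taking $i=k$, $j=t$ and using item 3) (with both sides empty) for the residual $\mathfrak q^C_{n-|\underline a|}(\varnothing\mid\varnothing)$.

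The crux throughout is the correspondence "$A$-component $\leftrightarrow$ either a $\sigma$-invariant component or a mirror pair", and the observation that Théorème \ref{thm1} and Théorème \ref{thm2} are calibrated so that this correspondence is index-preserving; once that dictionary is set up cleanly, items 1)–4) are each a one-line application, and the only genuinely fiddly verification is the vertex-insertion argument for item 2).
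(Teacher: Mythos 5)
Your overall strategy --- reading everything off the meander $\Gamma^{C}_{n}(\underline{a}\mid\underline{b})=\Gamma^{A}(\underline{a}^{'}\mid\underline{b}^{'})$ and the symmetry $\sigma$, then applying Th\'eor\`eme \ref{thm2} --- is the right one (the paper states this corollary without proof, as an immediate consequence of Th\'eor\`eme \ref{thm2}), and your treatments of 1) and 4) are essentially sound. But two of your combinatorial identifications are wrong, and one of them would produce a false formula. In 2), each of the $n-m$ new central pairs $\{x,2n+1-x\}$, $m<x\leq n$, is joined by an arc \emph{above and} an arc \emph{below}, hence is a $2$-cycle, not a segment; since an invariant segment contributes $0$ in Th\'eor\`eme \ref{thm2}, your sentence ``the added segments are invariant and the index grows by exactly $n-m$'' is self-contradictory --- the index grows by $n-m$ precisely because these new components are cycles, each counting $1$.

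In 3), the claim that an odd outer block $a_{i}$ ``yields a segment on the left and its mirror on the right, i.e.\ two non-invariant segments counting $1$'' is false and would give $\sum_{i}[\frac{a_{i}}{2}]+\sharp\{i\,:\,a_{i}\ \mathrm{impair}\}+(n-|\underline{a}|)$ instead of the stated formula. In fact the central vertex $x_{0}$ of the left copy of an odd block has no arc below (it is fixed by $\theta_{\underline{a}^{'}}$) and is joined above to $2n+1-x_{0}$, which is likewise fixed below; the component is therefore the single $\sigma$-invariant segment $\{x_{0},2n+1-x_{0}\}$, contributing $0$, while each of the $[\frac{a_{i}}{2}]$ pairs $\{x,\theta(x)\}$ with $x\neq\theta(x)$ closes up with its mirror into one invariant $4$-cycle contributing $1$; your fallback via Th\'eor\`eme \ref{thm0} and ``the halving in Th\'eor\`eme \ref{thm2}'' is too vague to repair this, since the two theorems weight cycles differently. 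Finally, in 4) the parenthetical ``invariant cycle $\to 1$'' does not equal the weight $2$ that Th\'eor\`eme \ref{thm1} assigns to a cycle; your argument survives only because the two outer halves $\{1,\dots,\ell\}$ and $\{2n-\ell+1,\dots,2n\}$ carry no connecting arc, so every outer component occurs as a genuine non-invariant mirror pair and the invariant-cycle case never arises --- this should be said explicitly rather than allowed for.
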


\begin{lem}\label{lem0'} Soient $\underline{a}=(a_{1},\dots, a_{k})$ et $\underline{b}=(b_{1},\ldots,b_{t})$ deux compositions telles que $|\underline{b}|\leq|\underline{a}|\leq n$. On pose $\underline{a}^{-1}=(a_{k},\dots, a_{1})$. On a :
$$\chi[\mathfrak{q}^{C}_{n}(\underline{a}\mid\underline{b})]=\chi[\mathfrak{q}^{C}_{n+|\underline{a}|}(2|\underline{a}|\mid\underline{a}^{-1},\underline{b})]$$

\end{lem}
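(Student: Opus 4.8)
Le plan est de raisonner sur les m\'eandres et le Th\'eor\`eme \ref{thm2}. Posons $m=|\underline{a}|$ et $\mu=|\underline{b}|$, de sorte que $\mu\le m\le n$. On dispose de $\Gamma^{C}_{n}(\underline{a}\mid\underline{b})=\Gamma^{A}(\underline{a}'\mid\underline{b}')$ sur les sommets $\{1,\ldots,2n\}$, o\`u $\underline{a}'=(a_{1},\ldots,a_{k},2(n-m),a_{k},\ldots,a_{1})$ et $\underline{b}'=(b_{1},\ldots,b_{t},2(n-\mu),b_{t},\ldots,b_{1})$ : les $k$ premiers (resp. $k$ derniers) blocs du bas portent sur les sommets de $[1,m]$ (resp. de $[2n-m+1,2n]$), et le bloc central $2(n-m)$ sur $[m+1,2n-m]$. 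De m\^eme $\Gamma^{C}_{n+m}(2m\mid\underline{a}^{-1},\underline{b})=\Gamma^{A}(\underline{c}'\mid\underline{d}')$ sur $\{1,\ldots,2n+2m\}$, avec $\underline{c}'=(2m,2(n-m),2m)$ et $\underline{d}'=(a_{k},\ldots,a_{1},b_{1},\ldots,b_{t},2(n-\mu),b_{t},\ldots,b_{1},a_{1},\ldots,a_{k})$. L'id\'ee centrale est de montrer que le second m\'eandre s'obtient du premier par un d\'edoublement \emph{local} des deux blocs extr\^emes du bas, dans l'esprit du Lemme \ref{lem0}.

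Plus pr\'ecis\'ement, je proc\'ederais ainsi : on adjoint $m$ sommets-miroir \`a l'extr\^eme gauche (copies des sommets de $[1,m]$) et $m$ \`a l'extr\^eme droite (copies de ceux de $[2n-m+1,2n]$), on d\'ecale les $2n$ anciens sommets de $m$ positions vers la droite, on remplace chaque arc du bas dont les deux extr\'emit\'es appartiennent \`a $[1,m]$ (resp. \`a $[2n-m+1,2n]$) par son arc sym\'etrique trac\'e \emph{au-dessus} de la droite entre les copies-miroir correspondantes, et l'on joint par un arc du bas chaque ancien sommet extr\^eme $x$ \`a sa copie-miroir $\bar x$, de sorte que ces arcs forment, de chaque c\^ot\'e, un unique bloc du bas de taille $2m$. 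Un d\'ecompte de blocs montre que la nouvelle structure du bas est $(2m,2(n-m),2m)$ et celle du haut est $(a_{k},\ldots,a_{1},b_{1},\ldots,b_{t},2(n-\mu),b_{t},\ldots,b_{1},a_{1},\ldots,a_{k})$, c'est-\`a-dire que l'on obtient exactement $\Gamma^{C}_{n+m}(2m\mid\underline{a}^{-1},\underline{b})$ ; de plus la sym\'etrie $\sigma'$ de ce grand m\'eandre induit $\sigma$ sur les anciens sommets et envoie $\bar x$ sur $\overline{\sigma(x)}$.

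Je construirais ensuite une bijection entre les composantes connexes des deux m\'eandres : \`a une composante $X$ de $\Gamma^{C}_{n}(\underline{a}\mid\underline{b})$ on associe la composante $X'$ obtenue en parcourant $X$ et en rempla\c{c}ant chaque travers\'ee d'un arc du bas $\{x,y\}$ situ\'e dans un bloc extr\^eme par le d\'etour $x\to\bar x\to\bar y\to y$ (un arc du bas est ainsi remplac\'e par trois arcs successifs : bas, haut, bas), les arcs du haut et ceux du bloc central du bas restant inchang\'es, et en ajoutant l'ar\^ete pendante $v\to\bar v$ \`a toute extr\'emit\'e $v$ de $X$ qui est un point fixe de $\theta_{\underline{a}'}$ (n\'ecessairement dans un bloc extr\^eme). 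Ce parcours demeure altern\'e, donc $X'$ est un cycle (resp. un segment) si et seulement si $X$ en est un ; l'application $X\mapsto X'$ est bijective, car toute composante du grand m\'eandre contient un ancien sommet (toute copie-miroir \'etant reli\'ee \`a un ancien sommet par un arc du bas) et est enti\`erement d\'etermin\'ee par l'ensemble de ses anciens sommets ; enfin $\sigma'(X')=(\sigma X)'$, de sorte que $X'$ est $\sigma'$-invariante si et seulement si $X$ est $\sigma$-invariante. Les deux m\'eandres ont alors le m\^eme nombre de cycles et le m\^eme nombre de segments non invariants, et le Th\'eor\`eme \ref{thm2} donne l'\'egalit\'e voulue.

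Le point d\'elicat sera la v\'erification de cette derni\`ere \'etape : contr\^oler que la dichotomie cycle/segment et l'invariance par la sym\'etrie sont bien pr\'eserv\'ees, en traitant soigneusement les cas limites (extr\'emit\'es qui sont des points fixes du bas et passent alors sur les copies-miroir ; composantes r\'eduites \`a un sommet ; traitement distinct des arcs du bas du bloc central et de ceux des blocs extr\^emes). L'\'etape de construction, elle, ne r\'eclame qu'un d\'ecompte de blocs un peu fastidieux mais sans surprise. Si l'on souhaite all\'eger, on peut d'abord se ramener, gr\^ace au Corollaire \ref{cor1}, 2), au cas $n=m$, puis traiter \`a part le cas $\mu=m$, qui r\'esulte imm\'ediatement du Corollaire \ref{cor1}, 4) et du Lemme \ref{lem0} ; seul le cas $\mu<m$ exige alors la construction ci-dessus.
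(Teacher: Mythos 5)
Votre d\'emonstration est correcte et suit essentiellement la m\^eme d\'emarche que celle de l'article : \'etablir une bijection entre les composantes connexes des deux m\'eandres qui pr\'eserve les cycles et les segments non invariants, puis conclure par le th\'eor\`eme \ref{thm2}. Votre version ne fait qu'expliciter la chirurgie de graphe (d\'edoublement en miroir des blocs extr\^emes du bas, cf.\ l'exemple \ref{Ex1}) que l'article se contente d'affirmer via la conjugaison des involutions $\theta$.
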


\begin{proof}
D'apr\`es le corollaire \ref{cor1}, on peut supposer $|\underline{a}|=n$. Soit $I=[1,n]$, $I^{'}=[1,|\underline{b}|]$ et $\underline{c}=(a_{k},\dots, a_{1},b_{1},\ldots,b_{t})$. On v\'erifie que $\theta_{a}$ (resp. $\theta_{b}$) est la restriction de $\theta_{n}\theta_{c}\theta_{n}$ (resp. $\theta_{n+|\underline{b}|}\theta_{c}\theta_{n+|\underline{b}|}$) \`a $I$ (resp. $I^{'}$). Puisque les m\'eandres $\Gamma^{C}_{2n}(2n\mid\underline{a}^{-1},\underline{b})$ et $\Gamma^{C}_{n}(\underline{a}\mid\underline{b})$ sont invariants par la sym\'etrie $\sigma$ d\'efinie au d\'ebut de ce paragraphe, il existe alors une bijection entre les ensembles des composantes connexes des deux m\'eandres conservant le nombre des cycles et le nombre des segments non invariants (voir exemple \ref{Ex1}). Le r\'esultat se d\'eduit alors du th\'eor\`eme \ref{thm2}.

\end{proof}

\begin{Ex}\label{Ex1}
 Consid\'erons la paire de compositions $((2,3),(3,1))$, les arcs en bleu dans le m\'eandre $\Gamma^{C}_{5}(2,3\mid 3,1)$ sont envoy\'es sur les arcs en bleu dans le m\'eandre $\Gamma^{C}_{10}(10\mid 3,2,3,1)$. 

\medskip$\\$
$\Gamma^{C}_{5}(2,3\mid 3,1)=
{\setlength{\unitlength}{0.021in}
\raisebox{-12\unitlength}{%
\begin{picture}(49.5,-10)(-10,-10)
\multiput(63,3)(10,0){10}{\circle*{2}}
\put(73,4){\oval(20,15)[t]}
\put(143,4){\oval(20,15)[t]}
\put(108,4){\oval(10,10)[t]}

\textcolor{blue}{\put(67.5,3){\oval(10,10)[b]}}
\textcolor{blue}{\put(145,3){\oval(10,10)[b]}}
\textcolor{blue}{\put(88,3){\oval(20,15)[b]}}
\textcolor{blue}{\put(115,3){\oval(20,15)[b]}}
\end{picture}
} }
\put(100,-50){\line(0,1){95}}$ $\\$
  
\medskip$\\$
$\Gamma^{C}_{10}(10\mid 3,2,3,1)=
{\setlength{\unitlength}{0.021in}
\raisebox{-12\unitlength}{%
\begin{picture}(50,-10)(-12,-10)
\multiput(0,3)(10,0){20}{\circle*{2}}

\put(60,4){\oval(20,15)[t]}
\put(130,4){\oval(20,15)[t]}
\put(95,4){\oval(10,10)[t]}

\textcolor{blue}{\put(10,4){\oval(20,15)[t]}}
\textcolor{blue}{\put(32,4){\oval(10,10)[t]}}

\textcolor{blue}{\put(175,4){\oval(20,15)[t]}}
\textcolor{blue}{\put(147,4){\oval(10,10)[t]}}

\put(37,3){\oval(90,50)[b]}
\put(37,3){\oval(70,40)[b]}
\put(37,3){\oval(50,30)[b]}
\put(37,3){\oval(30,20)[b]}
\put(37,3){\oval(10,10)[b]}

\put(137,3){\oval(90,50)[b]}
\put(137,3){\oval(70,40)[b]}
\put(137,3){\oval(50,30)[b]}
\put(137,3){\oval(30,20)[b]}
\put(137,3){\oval(10,10)[b]}

\end{picture}
} }
\put(82.25,-50){\line(0,1){95}}$ $\\$
\end{Ex}

\begin{rem}
Compte tenu du corollaire \ref{cor1} et du lemme \ref{lem0'}, pour \'etudier l'indice des sous-alg\`ebres biparaboliques de $\mathfrak{sp}(2n)$, on peut se ramener au cas des sous-alg\`ebres biparaboliques de la forme $\mathfrak{q}^{C}_{n}(n\mid\underline{a})$, o\`u $\underline{a}$ est une composition d'un entier inf\'erieur ou \'egal \`a $n$. 
\end{rem}

\begin{lem} Soient $\underline{a}=(a_{1},\ldots,a_{k})$ une composition v\'erifiant $|\underline{a}|\leq n$ et $s=n-|\underline{a}|$. Alors, pour tout $t\in\mathbb{N}$, on a
$$\chi[\mathfrak{q}^{C}_{n+4ts}(n+4ts\mid \underbrace{2s,\ldots,2s}_{t},\underline{a},\underbrace{2s,\ldots,2s}_{t})]=\chi[\mathfrak{q}^{C}_{n}(n\mid\underline{a})]$$
En particulier, si $\mathfrak{q}^{C}_{n}(n\mid\underline{a})$ est une sous-alg\`ebre de Frobenius de $\mathfrak{sp}(2n)$, alors, pour tout $t\in\mathbb{N}$, la sous-alg\`ebre biparabolique 
$\mathfrak{q}^{C}_{n+4ts}(n+4ts\mid \underbrace{2s,\ldots,2s}_{t},\underline{a},\underbrace{2s,\ldots,2s}_{t})$ est une sous-alg\`ebre de Frobenius de $\mathfrak{sp}(2(n+4ts))$.
\end{lem}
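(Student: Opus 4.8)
The plan is to reduce everything to the case $t=1$ and then to compare meanders by means of Theorem \ref{thm2}.

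\medskip

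First, the reduction. It is a one–line induction on $t$: writing $\underline{a}^{(t)}=(\underbrace{2s,\dots,2s}_{t},\underline{a},\underbrace{2s,\dots,2s}_{t})$ and $n_{t}=n+4ts$, one has $n_{t}-|\underline{a}^{(t)}|=s$ for every $t$, and $(\underline{a}^{(t+1)},n_{t+1})=\bigl((2s,\underline{a}^{(t)},2s),\,n_{t}+4s\bigr)$; hence the passage from $t$ to $t+1$ is precisely the statement for $t=1$ applied to the pair $(\underline{a}^{(t)},n_{t})$, whose associated parameter is again $s$. Moreover the ``en particulier'' clause is just the specialisation to $\chi=0$ of the index equality. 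So it suffices to show: for any composition $\underline{b}$ with $|\underline{b}|\leq N$ and $r:=N-|\underline{b}|$, one has $\chi[\mathfrak{q}^{C}_{N+4r}(N+4r\mid 2r,\underline{b},2r)]=\chi[\mathfrak{q}^{C}_{N}(N\mid\underline{b})]$.

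\medskip

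To prove this I would write down both meanders explicitly. Set $\Gamma:=\Gamma^{C}_{N}(N\mid\underline{b})$ (with $2N$ vertices): its upper arcs are the two rainbows above $[1,N]$ and above $[N+1,2N]$ (joining $x$ to $N+1-x$, resp.\ $x$ to $3N+1-x$), and its lower arcs are those of the composition $(\underline{b},2r,\underline{b}^{-1})$ of $2N$, i.e.\ the arcs of $\theta_{\underline{b}}$ on the initial block $[1,|\underline{b}|]$, the rainbow of the central $2r$-block, and the arcs of $\theta_{\underline{b}^{-1}}$ on the terminal block. Likewise $\Gamma':=\Gamma^{C}_{N+4r}(N+4r\mid 2r,\underline{b},2r)$ has $2N+8r$ vertices, its upper arcs are the rainbows above $[1,N+4r]$ and above $[N+4r+1,2N+8r]$, and its lower arcs are those of the composition $(2r,\underline{b},2r,2r,2r,\underline{b}^{-1},2r)$ of $2N+8r$ (note that $(N+4r)-|(2r,\underline{b},2r)|=r$). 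Translating the $\underline{b}$-block by $2r$, the central $2r$-block by $4r$, and the $\underline{b}^{-1}$-block by $6r$ identifies the lower arcs of $\Gamma$ with those lower arcs of $\Gamma'$ carried by the second, fourth and sixth blocks of the composition above; the remaining lower arcs of $\Gamma'$ are the internal rainbows of the four new $2r$-blocks (the first, third, fifth and seventh, exchanged in pairs by $\sigma$), while the two upper rainbows of $\Gamma'$ are obtained from those of $\Gamma$ by enlarging each of them by $2r$, so that together they absorb these four new blocks.

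\medskip

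Granting this picture, I would exhibit a bijection between the connected components of $\Gamma$ and those of $\Gamma'$ preserving the number of cycles and the number of non-$\sigma$-invariant segments, and conclude by Theorem \ref{thm2}; this follows the same scheme as the proof of Lemma \ref{lem0'}, and in practice an explicit example makes the correspondence transparent. The hard part is that enlarging the upper rainbows is not merely the adjunction of arcs: it re-pairs every old vertex (each partner being shifted) and grafts the $8r$ new vertices onto the old components. One must check, running through the blocks of $\underline{b}$, that along the trajectory of each component exactly one old upper arc is replaced by each newly inserted upper arc, the internal rainbows of the new blocks playing the role of bridges between consecutive new vertices, so that no component is created or destroyed and none changes type (cycle versus segment) or $\sigma$-invariance. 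This is a finite, somewhat tedious, case verification, which is exactly the kind of bookkeeping already carried out for the type $A$ statement underlying Theorem \ref{thm2}.
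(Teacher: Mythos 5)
Your proof follows the same route as the paper's: an induction on $t$ that hinges on the deficiency $n_{t}-|\underline{a}^{(t)}|$ remaining equal to $s$ at every stage, reducing everything to the case $t=1$, which is then settled by a direct comparison of the two meanders and Theorem \ref{thm2}. The paper's own treatment of the $t=1$ step is a single annotated figure, so your explicit description of the blocks, of the translation identifying the old lower arcs with part of the new ones, and of the component correspondence is, if anything, more detailed than the original.
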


\begin{proof}
La figure suivante montre que le r\'esultat est vrai pour $t=1$.
\medskip$\\$
$\Gamma^{C}_{n+4s}(n+4s\mid 2s,\underline{a},2s)=
{\setlength{\unitlength}{0.021in}
\raisebox{-12\unitlength}{%
\begin{picture}(50,-10)(-17,-10)
\multiput(-13,3)(10,0){5}{\circle*{2}}
\multiput(50,3)(10,0){1}{\circle*{2}}
\multiput(65,3)(20,0){2}{\circle*{2}}
\multiput(95,3)(10,0){6}{\circle*{2}}

\textcolor{blue}{\put(2,3){\oval(30,20)[t]}}
\textcolor{blue}{\put(-0.5,3){\oval(10,10)[t]}}
\textcolor{blue}{\put(105,3){\oval(30,20)[t]}}
\textcolor{blue}{\put(102.5,3){\oval(10,10)[t]}}

\put(31.5,3){\oval(23.5,20)[t]}
\put(68,3){\oval(21,20)[t]}

\put(46,3){\ldots}
\put(160,3){\ldots}
\put(170,3){\ldots}

\textcolor{blue}{\put(58.5,3){\oval(158,100)[b]}}
\textcolor{blue}{\put(56,3){\oval(138,85)[b]}}
\textcolor{blue}{\put(53.5,3){\oval(118,70)[b]}}
\textcolor{blue}{\put(52,3){\oval(98,55)[b]}}
\put(51.5,3){\oval(78,40)[b]}

\textcolor{blue}{\put(148,3){\oval(55,20)[lt]}}
\textcolor{blue}{\put(145.5,3){\oval(35,10)[lt]}}

\qbezier[10](50,-3)(50,-9)(50,-15)
\put(10,3){\line(1,0){23}}
\put(47.5,3){\line(1,0){20}}
\put(78,3){\line(1,0){10}}
\put(82,4){$s$}
\put(18,5){$a_{1}$}
\put(55,5){$a_{k}$}
\end{picture}
} }
$\put(180,-50){\line(0,1){95}}$ 
$
$\\$
$\\$
$\\$
$\\$
$\\$
$\\$
Le r\'esultat s'en d\'eduit alors par r\'ecurrence sur $t$. 
\end{proof}

\begin{lem}\label{lemM}
Soient $\underline{a}=(a_{1},\ldots,a_{k})$ une composition v\'erifiant $|\underline{a}|\leq n$. Supposons qu'il existe $1\leq i\leq k$ tel que  $|\underline{a}|_{i}:=a_{1}+\dots+a_{i}\leq n-|\underline{a}|$. Alors, on a
$$\chi[\mathfrak{q}^{C}_{n}(n\mid\underline{a})]=\sum_{1\leq j\leq i}[\frac{a_{j}}{2}]+\chi[\mathfrak{q}^{C}_{n-2|\underline{a}|_{i}}(n-2|\underline{a}|_{i}\mid a_{i+1},\ldots,a_{k})]$$
En particulier, si $|\underline{a}|\leq [\frac{n}{2}]$, on a 
$$\chi[\mathfrak{q}^{C}_{n}(n\mid\underline{a})]=\sum_{1\leq j\leq k}[\frac{a_{j}}{2}]+[\frac{n-2|\underline{a}|}{2}]$$
\end{lem}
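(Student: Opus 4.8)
The idea is to work directly with the meander $\Gamma^{C}_{n}(n\mid\underline{a})$ and isolate the connected components built from the arcs associated with the blocks $a_1,\dots,a_i$. Recall that $\Gamma^{C}_{n}(n\mid\underline{a})=\Gamma^{A}(\underline{a}^{'}\mid\underline{b}^{'})$ with $\underline{a}^{'}=(2n)$ (the top arcs are simply the ``big'' involution $\theta_{2n}$ nesting everything) and $\underline{b}^{'}=(a_1,\dots,a_k,2(n-|\underline{a}|),a_k,\dots,a_1)$ (the bottom arcs). First I would observe that the hypothesis $|\underline{a}|_i\le n-|\underline{a}|$ means precisely that, under the symmetry $\sigma$ about the midpoint of vertices $n$ and $n+1$, the first $i$ bottom blocks $a_1,\dots,a_i$ (sitting among vertices $1,\dots,|\underline{a}|_i$) and their mirror images (sitting among vertices $2n-|\underline{a}|_i+1,\dots,2n$) are separated by the large central bottom block of width $2(n-|\underline{a}|)$; in particular no bottom arc joins a vertex $\le |\underline{a}|_i$ to a vertex $>2n-|\underline{a}|_i$.

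Next I would use this to peel off the outermost relevant part. For each $j$ with $1\le j\le i$ consider the set of vertices in the $j$-th bottom block. The bottom arcs within this block pair them up as $(x,\theta_{\underline{a}}(x))$; combined with the top arcs $\theta_{2n}$, which restricted to the relevant vertex range simply nest them symmetrically, I claim the components they generate are of two types exactly as in Corollaire \ref{cor1}(3): each block of size $a_j$ contributes $[a_j/2]$ cycles (or $[a_j/2]$ cycles plus possibly an invariant segment when $a_j$ is odd — but here, since these blocks are \emph{not} central, such a block is paired by a top arc with its $\sigma$-image and so produces genuine cycles, giving exactly $[a_j/2]$ by the count in Théorème \ref{thm2}, i.e.\ cycles count once and non-invariant segments count $\tfrac12$). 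The precise bookkeeping is a local computation with $\theta_{\underline{a}}$ and $\theta_{2n}$ on an interval of the form $[\,p+1,\,p+a_j\,]\cup\sigma([\,p+1,\,p+a_j\,])$; this is entirely analogous to the proof of part (3) of Corollaire \ref{cor1}, so I would either cite it or reproduce the one-line argument. After removing these $\sum_{1\le j\le i}[a_j/2]$ contributions, what remains is, by the same separation observation, the meander obtained by deleting the vertices of the first $i$ blocks and their mirrors — i.e.\ exactly $\Gamma^{C}_{n-2|\underline{a}|_i}\bigl(n-2|\underline{a}|_i\mid a_{i+1},\dots,a_k\bigr)$ — because the central block shrinks from $2(n-|\underline{a}|)$ to $2(n-|\underline{a}|)-2|\underline{a}|_i=2\bigl((n-2|\underline{a}|_i)-(|\underline{a}|-|\underline{a}|_i)\bigr)$ and the remaining blocks are $a_{i+1},\dots,a_k$. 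Applying Théorème \ref{thm2} to both meanders and adding gives the displayed formula.

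For the ``en particulier'' clause, suppose $|\underline{a}|\le[\frac n2]$, so that $|\underline{a}|\le n-|\underline{a}|$ and one may take $i=k$ in the first part; this yields $\chi[\mathfrak{q}^{C}_{n}(n\mid\underline{a})]=\sum_{1\le j\le k}[a_j/2]+\chi[\mathfrak{q}^{C}_{n-2|\underline{a}|}(n-2|\underline{a}|\mid\varnothing)]$, and by Corollaire \ref{cor1}(3) with empty composition the last term equals $n-2|\underline{a}|-0 = n-2|\underline{a}| = [\tfrac{n-2|\underline{a}|}{2}]$ when $n$ is even; in general $\chi[\mathfrak{q}^{C}_{m}(m\mid\varnothing)]=[m/2]$ by that same corollary (the composition being empty, $\sum[\,\cdot\,]$ is $0$ and $n-|\underline{a}|$ there is $m$; one checks $\mathfrak{q}^C_m(m\mid\varnothing)=\mathfrak{q}^C_m(\varnothing\mid\varnothing)$ has index $[m/2]$), so the last term is $[\tfrac{n-2|\underline{a}|}{2}]$, giving the stated identity.

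The main obstacle I anticipate is the careful justification that the components coming from the first $i$ bottom blocks are \emph{exactly} as counted and that what is left over is \emph{literally} the smaller meander with no spurious arcs crossing the removed region; this rests squarely on the inequality $|\underline{a}|_i\le n-|\underline{a}|$ guaranteeing that the large central bottom block of width $2(n-|\underline{a}|)$ ``insulates'' the first $i$ blocks from their $\sigma$-mirror images, so that deleting those vertices is a clean operation on the graph. Once that separation is nailed down, the rest is the arithmetic of block widths and an appeal to Théorème \ref{thm2}.
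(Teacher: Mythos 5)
Your overall strategy is the paper's: split $\Gamma^{C}_{n}(n\mid\underline{a})$ into the sub-meander generated by the first $i$ blocks and the rest, count $\sum_{1\le j\le i}[\frac{a_{j}}{2}]$ cycles from the first piece, and identify the second piece with $\Gamma^{C}_{n-2|\underline{a}|_{i}}(n-2|\underline{a}|_{i}\mid a_{i+1},\ldots,a_{k})$. But there is a concrete structural error at the outset that propagates. The composition $(n)$ expands to $\underline{a}^{'}=(n,2(n-n),n)=(n,n)$, not $(2n)$: the arcs it contributes are \emph{two} nested families, one over $[1,n]$ and one over $[n+1,2n]$ (compare the picture of $\Gamma^{C}_{10}(10\mid 3,2,3,1)$ in the example \ref{Ex1}), not the single involution $x\mapsto 2n+1-x$. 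With the correct structure, a vertex $j\le|\underline{a}|_{i}$ is sent by $\theta_{(n,n)}$ to $n+1-j$, and the whole point of the hypothesis $|\underline{a}|_{i}\le n-|\underline{a}|$ is that $n+1-j$ then lands inside the central block of $\underline{b}^{'}$ (which occupies $[|\underline{a}|+1,2n-|\underline{a}|]$), whose involution carries it to $n+j$, from which $\theta_{(n,n)}$ reaches the mirror block; if the hypothesis fails, $n+1-j$ can land in one of the blocks $a_{i+1},\ldots,a_{k}$ and the decomposition breaks. Your stated use of the hypothesis --- that no $\underline{b}^{'}$-arc joins a vertex $\le|\underline{a}|_{i}$ to a vertex $>2n-|\underline{a}|_{i}$ --- is true without any hypothesis (the blocks of $\underline{b}^{'}$ are disjoint intervals) and is not where the assumption is needed.

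This mis-identification makes the bookkeeping inconsistent. The components meeting the first $i$ blocks occupy $4|\underline{a}|_{i}$ vertices, namely $[1,|\underline{a}|_{i}]\cup[n-|\underline{a}|_{i}+1,n+|\underline{a}|_{i}]\cup[2n-|\underline{a}|_{i}+1,2n]$, and form a copy of $\Gamma^{C}_{2|\underline{a}|_{i}}(2|\underline{a}|_{i}\mid a_{1},\ldots,a_{i})$; that is precisely why the central block shrinks by $2|\underline{a}|_{i}$ and the leftover, on $2(n-2|\underline{a}|_{i})$ vertices, is $\Gamma^{C}_{n-2|\underline{a}|_{i}}(n-2|\underline{a}|_{i}\mid a_{i+1},\ldots,a_{k})$. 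You delete only the $2|\underline{a}|_{i}$ vertices of the first blocks and their mirrors, yet simultaneously claim the (disjoint) central block loses $2|\underline{a}|_{i}$ vertices; both cannot hold, and with your arc structure the leftover would have $2(n-|\underline{a}|_{i})$ vertices, not $2(n-2|\underline{a}|_{i})$. (That your local computation still yields $[\frac{a_{j}}{2}]$ cycles plus an invariant segment for odd $a_{j}$ is an accident of the small configuration, not a validation of the setup.) Finally, in the \og en particulier\fg{} step the correct citation is the corollaire \ref{cor1}(3) applied to the composition $(m)$ with ambient rank $m$, giving $\chi[\mathfrak{q}^{C}_{m}(m\mid\varnothing)]=[\frac{m}{2}]$; applied to the empty composition it gives $m$, and indeed $\mathfrak{q}^{C}_{m}(\varnothing\mid\varnothing)=\mathfrak{sp}(2m)$ has index $m$, not $[\frac{m}{2}]$, so that part of your justification needs to be rewritten even though the value you use is correct.
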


\begin{proof}

Le m\'eandre $\Gamma^{C}_{n}(n\mid\underline{a})$ est r\'eunion disjointe des m\'eandres $\Gamma^{C}_{2|\underline{a}|_{i}}(2|\underline{a}|_{i}\mid a_{1},\ldots,a_{i})$ et $\Gamma^{C}_{n-2|\underline{a}|_{i}}(n-2|\underline{a}|_{i}\mid a_{i+1},\ldots,a_{k})$. D'autre part, on v\'erifie que le m\'eandre $\Gamma^{C}_{2|\underline{a}|_{i}}(2|\underline{a}|_{i}\mid a_{1},\ldots,a_{i})$ est compos\'e de $\sum_{1\leq j\leq i}[\frac{a_{j}}{2}]$ cycles et $\sum_{1\leq j\leq i}[\frac{a_{j}+1}{2}]-\sum_{1\leq j\leq i}[\frac{a_{j}}{2}]$ segments qui sont tous invariants. Le r\'esultat se d\'eduit alors du th\'eor\`eme \ref{thm2}.
\end{proof}

\begin{lem}\label{lem1}
Soient $\underline{a}=(a_{1},\ldots,a_{k})$ une composition v\'erifiant $|\underline{a}|\leq n$. On pose $a_{k+1}=n-|\underline{a}|$. Alors, 
\begin{itemize}
\item[1)]Soient $a_{i,j}=(a_{1}+\dots +a_{i})-(a_{j}+\dots+a_{k+1})$ et $a^{i,j}=(a_{i+1}+\dots +a_{j-1})+|a_{i,j}|$, $1\leq i<j\leq k+1$. On a

$\chi[\mathfrak{q}^{C}_{n}(n\mid \underline{a})]=\begin{cases}\chi[\mathfrak{q}^{C}_{n+a^{i,j}}(n+a^{i,j}\mid a_{1},\ldots,a_{i},a^{i,j},a_{i+1},\ldots,a_{k})]\;si\;a_{i,j}< 0\\
\chi[\mathfrak{q}^{C}_{n+a^{i,j}}(n+a^{i,j}\mid a_{1},\ldots,a_{j-1},a^{i,j},a_{j},\ldots,a_{k})]\;si\;a_{i,j}\geq 0\\
\end{cases}$\\
\item[2)] Soit $d_{i}=(a_{1}+\dots +a_{i-1})-(a_{i+1}+\dots+a_{k+1}),\;1\leq i\leq k$. Supposons qu'il existe $1\leq i\leq k$ tel que $a_{i}\geq |d_{i}|$. On a 
\begin{itemize}
\item[i)] $\chi[\mathfrak{q}^{C}_{n}(n\mid \underline{a})]=\begin{cases}\chi[\mathfrak{q}^{C}_{n+a_{i}+d_{i}}(n+a_{i}+d_{i}\mid a_{1},\ldots,a_{i},a_{i}+d_{i},a_{i+1},\ldots,a_{k})]\;si\;d_{i}< 0\\
\chi[\mathfrak{q}^{C}_{n+a_{i}-d_{i}}(n+a_{i}-d_{i}\mid a_{1},\ldots,a_{i-1},a_{i}-d_{i},a_{i},\ldots,a_{k})]\;si\;d_{i}\geq 0\\
\end{cases}$\\
\item[ii)] $\chi[\mathfrak{q}^{C}_{n}(n\mid \underline{a})]=\chi[\mathfrak{q}^{C}_{n-|d_{i}|}(n-|d_{i}|\mid a_{1},\ldots,a_{i-1},a_{i}-|d_{i}|,a_{i+1},\ldots,a_{k})]$.
\end{itemize}
\end{itemize}
\end{lem}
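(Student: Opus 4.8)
The plan is to deduce everything from the reduction already available for type $A$ in Théorème \ref{thm0} together with the dictionary between $\mathfrak{q}^{C}_{n}(n\mid\underline{a})$ and a type-$A$ parabolic provided by Lemme \ref{lem0'}. First I would record the base identity: by Lemme \ref{lem0'} (applied with $\underline{a}\leftrightarrow (n)$ in the first slot, so $|\underline{a}|=n$), one has
$$\chi[\mathfrak{q}^{C}_{n}(n\mid\underline{a})]=\chi[\mathfrak{q}^{C}_{2n}(2n\mid\underline{a}^{-1},(n-|\underline{a}|),\underline{a})],$$
and more to the point, the meander $\Gamma^{C}_{n}(n\mid\underline{a})$ is by definition $\Gamma^{A}(2n\mid a_{1},\dots,a_{k},2a_{k+1},a_{k},\dots,a_{1})$ where $a_{k+1}=n-|\underline{a}|$. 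So $\chi[\mathfrak{q}^{C}_{n}(n\mid\underline{a})]$ is governed by the $A$-type parabolic $\mathfrak{p}^{A}(a_{1},\dots,a_{k},2a_{k+1},a_{k},\dots,a_{1})$ of $\mathfrak{gl}(2n)$ via Théorème \ref{thm2}; but in fact since $\underline{a}=(n)$ on one side the relevant quantity is just $\chi$ of that $A$-parabolic up to the cycle/segment bookkeeping, and one checks directly that invariance under $\sigma$ makes the correspondence of Théorème \ref{thm2} compatible, i.e. every move on the symmetric $A$-picture that preserves $\chi^{A}$ preserves $\chi^{C}$.

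Next, for part 2)i), I would apply the elementary reduction of Théorème \ref{thm0} to the symmetric composition $\underline{c}=(a_{1},\dots,a_{k},2a_{k+1},a_{k},\dots,a_{1})$ of $2n$, at the index position corresponding to $a_{i}$ (and simultaneously its mirror image $a_{i}$). With $a_{k+1}=n-|\underline{a}|$ the ``$d$''-invariant of Théorème \ref{thm0} at the left copy of $a_{i}$ is exactly
$$(a_{1}+\cdots+a_{i-1})-\bigl(a_{i+1}+\cdots+a_{k}+2a_{k+1}+a_{k}+\cdots+a_{1}\bigr)
=(a_{1}+\cdots+a_{i-1})-\bigl((a_{i+1}+\cdots+a_{k+1})+n\bigr),$$
which after using $a_{k+1}+\cdots+a_{1}+n=2n-(a_{1}+\cdots+a_{i})+a_{i}$ one rewrites in terms of the $d_{i}=(a_{1}+\cdots+a_{i-1})-(a_{i+1}+\cdots+a_{k+1})$ of the statement. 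The hypothesis $a_{i}\ge|d_{i}|$ is precisely what is needed so that the relevant $\alpha$ in Théorème \ref{thm0} 1) is admissible (the reduced entry stays $\ge 0$), and one translates the resulting $A$-parabolic back to a $C$-algebra $\mathfrak{q}^{C}_{n+a_{i}\pm d_{i}}(\cdots)$ by recognizing the new symmetric composition as the doubled meander of that $C$-algebra. Part 2)ii) is then obtained by iterating 2)i) until the entry becomes $a_{i}-|d_{i}|$ exactly once, or equivalently by choosing the single value of $\alpha$ in the ``in particular'' clause of Théorème \ref{thm0}; the case $d_{i}=0$ does not arise here because $a_{i}\ge|d_{i}|$ is compatible with $d_{i}=0$ and then ii) is trivial ($0$-reduction), so I would note that if $d_{i}=0$ one should instead invoke Lemme \ref{lemM}-type splitting rather than 2).

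For part 1), the quantities $a_{i,j}$ and $a^{i,j}=(a_{i+1}+\cdots+a_{j-1})+|a_{i,j}|$ encode a reduction ``between two non-adjacent blocks''. I would handle this by the same mechanism but grouping: collapse the middle segment $a_{i+1},\dots,a_{j-1}$ together with whichever tail is shorter so that the effective block sizes on either side of the gap match after adding $a^{i,j}$; concretely, insert a new block $a^{i,j}$ next to block $i$ (if $a_{i,j}<0$) or next to block $j$ (if $a_{i,j}\ge 0$), which on the meander side is the statement that the $A$-parabolic $\mathfrak{p}^{A}$ of the enlarged doubled composition has the same index — this follows from Théorème \ref{thm0} applied repeatedly (or from Théorème \ref{ind} 2), the two-block identity $(a+b)\wedge(b+c)$, which is exactly the shape of $a^{i,j}$). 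So 1) is really a packaged corollary of 2)i) after one checks that the $d$-invariant for the combined block is $a_{i,j}$.

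The main obstacle I expect is purely bookkeeping: verifying that each elementary move performed on the symmetric $2n$-vertex $A$-meander (a) keeps the meander $\sigma$-symmetric, (b) is again the doubled meander $\Gamma^{C}_{m}(m\mid\cdots)$ of an honest $C$-algebra for the right $m$, and (c) under Théorème \ref{thm2} changes $\chi^{C}$ by $0$ — and chasing the arithmetic that turns the ``$A$-side'' invariant $(a_{1}+\cdots+a_{i-1})-(a_{i+1}+\cdots+a_{k}+2a_{k+1}+a_{k}+\cdots+a_{1})$ into $d_{i}=(a_{1}+\cdots+a_{i-1})-(a_{i+1}+\cdots+a_{k+1})$ and the new parameter into $n\pm(\text{something})$. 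None of this is deep, but it is where all the sign cases ($a_{i,j}<0$ vs. $\ge 0$, $d_{i}<0$ vs. $\ge 0$) and the ``$+n$'' shifts must be tracked with care; a clean figure (as in Exemple \ref{Ex1}) illustrating one reduction step would carry most of the weight, with the general case following by symmetry and induction.
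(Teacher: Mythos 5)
Your overall instinct --- work on the $\sigma$-symmetric doubled meander and perform the reductions there --- is in the same spirit as the paper, but the specific mechanism you propose, namely deducing everything from Théorème \ref{thm0} applied to the palindromic composition $\underline{c}=(a_{1},\dots,a_{k},2a_{k+1},a_{k},\dots,a_{1})$ of $2n$, does not go through, for three concrete reasons. First, $\Gamma^{C}_{n}(n\mid\underline{a})$ is $\Gamma^{A}(n,n\mid\underline{c})$ (the first composition $(n)$ doubles to $(n,0,n)\sim(n,n)$), not the meander of the parabolic $\mathfrak{p}^{A}(\underline{c})=\mathfrak{q}^{A}(\underline{c}\mid 2n)$; and in any case Théorème \ref{thm2} computes $\chi^{C}$ as $\#\mathrm{cycles}+\frac{1}{2}\#(\text{non-invariant segments})$, which is not a function of $\chi^{A}=2\#\mathrm{cycles}+\#\mathrm{segments}$, so ``every move preserving $\chi^{A}$ preserves $\chi^{C}$'' is not something one can ``check directly'' --- it is precisely the point at issue, and it fails for moves that break the symmetry. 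Second, the arithmetic is wrong: the invariant of Théorème \ref{thm0} at the left copy of $a_{i}$ in $\underline{c}$ equals $(a_{1}+\cdots+a_{i-1})-\bigl((a_{i+1}+\cdots+a_{k+1})+n\bigr)=d_{i}-n$, which cannot be rewritten as $d_{i}$; a single application of Théorème \ref{thm0} there would reduce $a_{i}$ modulo $n-d_{i}$ rather than $|d_{i}|$, and it produces a non-palindromic composition, i.e. an intermediate object that is not a $C$-meander at all. The operation actually needed is a simultaneous surgery at a position and its mirror image, and that is not an instance of Théorème \ref{thm0}. Third, your plan to obtain part 1) from part 2)i) by ``grouping'' $a_{i+1},\dots,a_{j-1}$ is backwards: merging those blocks changes the algebra and its index, and the non-adjacent case $j>i+1$ is strictly more general than the adjacent one.

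The paper proceeds in the opposite order and with a different tool: part 1) is proved directly, by observing that the folding/unfolding bijection from the proof of Lemme \ref{lem0'} applies locally to the sub-meander on the blocks $a_{i+1},\dots,a_{j-1}$, yielding a bijection of connected components between $\Gamma^{C}_{n}(n\mid\underline{a})$ and the enlarged meander which preserves the number of cycles and the number of invariant segments (hence the count of Théorème \ref{thm2}); part 2)i) is then the adjacent case of 1) (note $a^{i,i+1}=a_{i}+d_{i}$ when $d_{i}<0$ and $a^{i-1,i}=a_{i}-d_{i}$ when $d_{i}\geq 0$), and 2)ii) follows by applying 1) to the modified composition with $b_{i}=a_{i}-|d_{i}|$. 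To salvage your plan you would have to prove the symmetric two-sided analogue of the elementary move of Théorème \ref{thm0} directly on the meander, which amounts to reproving the paper's part 1).
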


\begin{proof}
\begin{itemize}

\item[1)] Supposons $a_{i,j}\leq 0$. Il suit de la d\'emonstration du lemme \ref{lem0'} qu'il existe une bijection entre les composantes connexes des m\'eandres $\Gamma^{C}_{a^{i,j}}(a^{i,j}\mid a_{i+1},\ldots,a_{j-1})$ et $\Gamma^{C}_{2a^{i,j}}(2a^{i,j}\mid a^{i,j},a_{i+1},\ldots,a_{j-1})$  conservant le nombre de cycles et le nombre de segments invariants. Compte tenu de la sym\'etrie $\sigma$, cette bijection se prolonge de mani\`ere naturelle en une bijection entre les composantes connexes des m\'eandres $\Gamma^{C}_{n}(n\mid \underline{a})$ et $\Gamma^{C}_{n+a^{i,j}}(n+a^{i,j}\mid a_{1},\ldots,a_{i},a^{i,j},a_{i+1},\ldots,a_{k})$ qui conserve le nombre de cycles et le nombre de segments invariants. Le r\'esultat d\'ecoule du th\'eor\`eme \ref{lem0'}. Le cas  $a_{i,j}\geq 0$ se d\'emontre de la m\^eme mani\`ere (voir exemple \ref{Ex2}).
\item[2)]
 \begin{itemize}
\item[i)] Il suffit de remarquer que $\begin{cases} a^{i,i+1}=a_{i,i+1}=a_{i}+d_{i}\;si\;d_{i}< 0\\
a^{i-1,i}=-a_{i-1,i}=a_{i}-d_{i}\;si\;d_{i}\geq 0\\
\end{cases}$ (voir exemple \ref{Ex3}).
\item[ii)] Soit $\underline{b}=(b_{1},\ldots,b_{k})$ tel que $b_{j}=a_{j}$ si $j\neq i$ et $b_{i}=a_{i}-|d_{i}|$. On v\'erifie que $b_{i-1,i+1}=d_{i}$ et $b^{i-1,i+1}=a_{i}$, il suit de 1) que l'on a\\
 $\chi[\mathfrak{q}^{C}_{n}(\underline{b}\mid n)]=\begin{cases}\chi[\mathfrak{q}^{C}_{n+a_{i}+d_{i}}(n+a_{i}+d_{i}\mid a_{1},\ldots,a_{i},a_{i}+d_{i},a_{i+1},\ldots,a_{k})]\;si\;d_{i}< 0\\
\chi[\mathfrak{q}^{C}_{n+a_{i}-d_{i}}(n+a_{i}-d_{i}\mid a_{1},\ldots,a_{i-1},a_{i}-d_{i},a_{i},\ldots,a_{k})]\;si\;d_{i}\geq 0\\
\end{cases}$\\
Le r\'esultat se d\'eduit de i)
\end{itemize}
 \end{itemize}
\end{proof}

\begin{Ex} \label{Ex2}
Consid\'erons la composition $\underline{a}=(a_{1},a_{2})=(3,2)$ et $n=7$, alors $|\underline{a}|=5$, $a_{3}=n-|\underline{a}|=2$, $a_{1,3}=a_{1}-a_{3}=1>0$ et $a^{1,3}=a_{2}+a_{1,3}=3$. Par suite, on a \\
$\chi[ \mathfrak{q}^{C}_{7}(7\mid 3,2)]=\chi[\mathfrak{q}^{C}_{10}(10\mid 3,2,3)]$.

\medskip$\\$
$\Gamma^{C}_{7}(7\mid 3,2)=
{\setlength{\unitlength}{0.021in}
\raisebox{-12\unitlength}{%
\begin{picture}(50,-10)(-18,-10)
\multiput(40,3)(10,0){14}{\circle*{2}}
\put(50,4){\oval(20,15)[t]}
\put(160,4){\oval(20,15)[t]}

\put(105,4){\oval(30,20)[t]}
\put(105,4){\oval(10,10)[t]}

\put(75,4){\oval(10,10)[t]}
\put(135,4){\oval(10,10)[t]}

\put(70,3){\oval(60,35)[b]}
\put(70,3){\oval(40,25)[b]}
\put(70,3){\oval(20,15)[b]}

\put(140,3){\oval(60,35)[b]}
\put(140,3){\oval(40,25)[b]}
\put(140,3){\oval(20,15)[b]}
\end{picture}
} }
\put(107,-50){\line(0,1){95}}$ $\\$
  
\medskip$\\$
$\Gamma^{C}_{10}(10\mid 3,2,3)=
{\setlength{\unitlength}{0.021in}
\raisebox{-12\unitlength}{%
\begin{picture}(50,-10)(-17,-10)
\multiput(0,3)(10,0){20}{\circle*{2}}

\textcolor{blue}{\put(60,4){\oval(20,15)[t]}}
\textcolor{blue}{\put(127,4){\oval(20,15)[t]}}

\put(92,4){\oval(30,20)[t]}
\put(92,4){\oval(10,10)[t]}

\put(32,4){\oval(10,10)[t]}    
\put(152,4){\oval(10,10)[t]}

\put(7,4){\oval(20,15)[t]}
\put(177,4){\oval(20,15)[t]}

\put(42,3){\oval(90,50)[b]}
\put(42,3){\oval(70,40)[b]}
\textcolor{blue}{\put(42,3){\oval(50,30)[b]}}
\textcolor{blue}{\put(39,3){\oval(30,20)[b]}}
\textcolor{blue}{\put(37,3){\oval(10,10)[b]}}

\put(137,3){\oval(90,50)[b]}
\put(137,3){\oval(70,40)[b]}
\textcolor{blue}{\put(137,3){\oval(50,30)[b]}}
\textcolor{blue}{\put(135,3){\oval(30,20)[b]}}
\textcolor{blue}{\put(132,3){\oval(10,10)[b]}}

\textcolor{blue}{\qbezier[25](5,2)(15,-10)(24,2)}
\textcolor{blue}{\qbezier[25](133,2)(143,-10)(152,2)}

\end{picture}
} }
\put(90,-50){\line(0,1){95}}$ 
$\\$
\end{Ex}

\begin{Ex} \label{Ex3}
Consid\'erons la composition $\underline{a}=(a_{1},a_{2})=(3,3)$ et $n=8$, alors $|\underline{a}|=6$, $a_{3}=n-|\underline{a}|=2$, $d_{2}=a_{1}-a_{3}=1>0$ et $a_{2}-d_{2}=2$. On a $\chi[ \mathfrak{q}_{8}(8\mid 3,3)]=\chi[\mathfrak{q}_{10}(10\mid 3,2,3)]$. Les arcs en bleu dans le m\'eandre $\Gamma^{C}_{8}(8\mid 3,3)$ sont envoy\'es sur les arcs en bleu dans le m\'eandre $\Gamma^{C}_{10}(10\mid 3,2,3)$.

\medskip$\\$
$\Gamma^{C}_{8}(8\mid 3,3)=
{\setlength{\unitlength}{0.021in}
\raisebox{-12\unitlength}{%
\begin{picture}(50,-10)(-10,-10)
\multiput(40,3)(10,0){16}{\circle*{2}}
\put(50,4){\oval(20,15)[t]}
\put(180,4){\oval(20,15)[t]}

\put(115,4){\oval(30,20)[t]}
\put(115,4){\oval(10,10)[t]}

\put(80,4){\oval(20,15)[t]}
\put(150,4){\oval(20,15)[t]}

\put(75,3){\oval(70,40)[b]}
\put(75,3){\oval(50,30)[b]}
\put(75,3){\oval(30,20)[b]}
\textcolor{blue}{\put(75,1){\oval(10,10)[b]}}

\put(155,3){\oval(70,40)[b]}
\put(155,3){\oval(50,30)[b]}
\put(155,3){\oval(30,20)[b]}
\textcolor{blue}{\put(155,1){\oval(10,10)[b]}}
\end{picture}
} }
\put(110,-50){\line(0,1){95}}$ $\\$
  
\medskip$\\$
$\Gamma^{C}_{10}(10\mid 3,2,3)=
{\setlength{\unitlength}{0.021in}
\raisebox{-12\unitlength}{%
\begin{picture}(50,-10)(-19,-10)
\multiput(0,3)(10,0){20}{\circle*{2}}

\put(60,4){\oval(20,15)[t]}
\put(130,4){\oval(20,15)[t]}

\put(95,4){\oval(30,20)[t]}
\put(95,4){\oval(10,10)[t]}

\put(10,4){\oval(20,15)[t]}
\textcolor{blue}{\put(35,5){\oval(10,10)[t]}}

\put(180,4){\oval(20,15)[t]}
\textcolor{blue}{\put(155,5){\oval(10,10)[t]}}

\put(45,3){\oval(90,50)[b]}
\put(45,3){\oval(70,40)[b]}
\put(45,3){\oval(50,30)[b]}
\put(45,3){\oval(30,20)[b]}
\put(45,3){\oval(10,10)[b]}

\put(145,3){\oval(90,50)[b]}
\put(145,3){\oval(70,40)[b]}
\put(145,3){\oval(50,30)[b]}
\put(145,3){\oval(30,20)[b]}
\put(145,3){\oval(10,10)[b]}

\end{picture}
} }
\put(92.5,-50){\line(0,1){95}}$ 
$\\$
\end{Ex}

Le th\'eor\`eme suivant est une cons\'equence im\'ediate du corollaire \ref{cor1} et du lemme \ref{lem1}.

\begin{theo}\label{thm0'} Soient $\underline{a}=(a_{1},\ldots,a_{k})$ une composition v\'erifiant $|\underline{a}|\leq n$. On pose $a_{k+1}=n-|\underline{a}|$ et $d_{i}=(a_{1}+\dots +a_{i-1})-(a_{i+1}+\dots+a_{k+1}),\;1\leq i\leq k$.
 \begin{itemize}
\item[1)]Pour tout $1\leq i\leq k$ tel que $d_{i}\neq 0$ et tout $\alpha\in\mathbb{Z}$ tel que $a_{i}+\alpha|d_{i}|\geq 0$, on a 
$$\chi[\mathfrak{q}^{C}_{n}(n\mid\underline{a})]=\chi[\mathfrak{q}^{C}_{n+\alpha |d_{i}|}(n+\alpha |d_{i}|\mid a_{1},\ldots,a_{i-1},a_{i}+\alpha |d_{i}|,a_{i+1},\ldots,a_{k})]$$
En particulier, on a 
$$\chi[\mathfrak{q}^{C}_{n}(n\mid\underline{a})]=\chi[\mathfrak{q}^{C}_{n-a_{i}+a_{i}[|d_{i}|]}(n-a_{i}+a_{i}[|d_{i}|]\mid a_{1},\ldots,a_{i-1},a_{i}[|d_{i}|],a_{i+1},\ldots,a_{k})]$$ 
\item[2)] Pour tout $1\leq i\leq k$ tel que $d_{i}=0$, on a 
$$\chi[\mathfrak{q}^{C}_{n}(n\mid\underline{a})]=a_{i}+\chi[\mathfrak{q}^{C}_{n-a_{i}}(a_{1},\ldots,a_{i-1},a_{i+1}\ldots,a_{k})]$$
\end{itemize} 
\end{theo}

\begin{rem}\label{rem}
Le lemme 2.4 de \cite{MB1} montre que si la composition $\underline{a}=(a_{1},\ldots,a_{k})$ et l'entier $n$ v\'erifient $|\underline{a}|\leq n$, alors il existe $\;1\leq i\leq k$ tel que $a_{i}\geq |d_{i}|$ ou $|\underline{a}|\leq [\frac{n}{2}]$. 
\end{rem}

\begin{rem}
Compte tenu du corollaire \ref{cor1}, des lemmes \ref{lem0'} et \ref{lemM} et de la remarque \ref{rem}, le th\'eor\`eme pr\'ec\'edent donne un algorithme de r\'eduction permettant le calcul de l'indice des sous-alg\`ebres biparaboliques de $\mathfrak{sp}(2n)$. 
\end{rem}

\begin{Ex} Consid\'erons la sous-alg\`ebre biparabolique $\mathfrak{q}^{C}_{200}(15,185\mid 17,61,117)$ de $\mathfrak{sp}(400)$. Compte tenu du lemme \ref{lem0'}, on a 
$$ \chi[\mathfrak{q}^{C}_{200}(15,185\mid 17,61,117)]=\chi[\mathfrak{q}^{C}_{400}(400\mid 185,15,17,61,117)]$$
Puis, en appliquant successivement le th\'eor\`eme \ref{thm0'}, on a 
\begin{align*}
 \chi[\mathfrak{q}^{C}_{400}(400\mid 185,15,17,61,117)]&=\chi[\mathfrak{q}^{C}_{385}(385\mid 185,17,61,117)]\\
 &=\chi[\mathfrak{q}^{C}_{369}(369\mid 185,1,61,117)]\\
 &=\chi[\mathfrak{q}^{C}_{185}(185\mid 1,1,61,117)]\\
 &=\chi[\mathfrak{q}^{C}_{69}(69\mid 1,1,61,1)]\\
 &=\chi[\mathfrak{q}^{C}_{9}(9\mid 1,1,1,1)]\\
 \end{align*}
 Il suit du lemme \ref{lemM} que l'on a 
 \begin{align*}
 \chi[\mathfrak{q}^{C}_{400}(400\mid 185,15,17,61,117)]&=\chi[\mathfrak{q}^{C}_{9}(9\mid 1,1,1,1)]\\
 &=0
 \end{align*}
\end{Ex}

\begin{lem}\label{lem F}

Pour tout $k\in\mathbb{N}^{\times}$ et $(\alpha_{1},\ldots,\alpha_{k})\in\mathbb{N}^{k}$, soit $a_{k+1}=k$ et $\underline{a}=(a_{1},\ldots,a_{k})$ la composition d\'efinie par $a_{i}=1+\alpha_{i}(a_{i+1}+\dots+a_{k+1}-i+1),\;1\leq i\leq k$ et $r=|\underline{a}|+k$. Alors, 
$\mathfrak{q}^{C}_{r}(r\mid\underline{a})$ est une sous-alg\`ebre de Frobenius de $\mathfrak{sp}(2r)$.

\end{lem}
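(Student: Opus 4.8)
The strategy is to apply the reduction algorithm of Théorème \ref{thm0'} repeatedly to the parabolic $\mathfrak{q}^{C}_{r}(r\mid\underline{a})$ and show that it collapses, step by step, to a parabolic of the form $\mathfrak{q}^{C}_{m}(m\mid 1,\ldots,1)$ with $m$ exactly twice the number of remaining $1$'s, whose index is $0$ by Lemme \ref{lemM}. The point of the particular definition $a_{i}=1+\alpha_{i}(a_{i+1}+\cdots+a_{k+1}-i+1)$ with $a_{k+1}=k$ is that it is tailored so that, reading the composition $\underline{a}$ from the right, each $a_{i}$ differs from $1$ by a multiple of the quantity that will play the role of $|d_{i}|$ at the moment it is processed.

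First I would set up the bookkeeping. With $a_{k+1}=n-|\underline{a}|$ forced to equal $k$ since $n=r=|\underline{a}|+k$, the quantities $d_{i}=(a_{1}+\cdots+a_{i-1})-(a_{i+1}+\cdots+a_{k+1})$ of Théorème \ref{thm0'} are defined. The reduction will be carried out from $i=k$ downward. One checks by induction that when we reach index $i$ (having already reduced $a_{k},a_{k-1},\ldots,a_{i+1}$ each to $1$), the "tail sum" $a_{i+1}+\cdots+a_{k}+a_{k+1}$ has become $(k-i)+k = 2k-i$, while the "head sum" $a_{1}+\cdots+a_{i-1}$ is unchanged. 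Hence at that stage $|d_{i}| = |(a_{1}+\cdots+a_{i-1}) - (2k-i)|$; the key arithmetic fact to verify is that $a_{i}-1 = \alpha_{i}(a_{i+1}+\cdots+a_{k+1}-i+1)$ is, at the moment of processing, an integer multiple of this current $|d_{i}|$, so that the "in particular" clause of Théorème \ref{thm0'}.1 (reducing $a_{i}$ to $a_{i}[|d_{i}|]$) together with repeated use of the general $\alpha$-step brings $a_{i}$ down to $1$ without changing the index — or, if some $d_{i}=0$ occurs, that part 2 of the theorem contributes a summand that is likewise controlled. The cleanest route is: show $a_{i}\equiv 1 \pmod{|d_{i}|}$ at that stage, pick $\alpha$ in Théorème \ref{thm0'}.1 to replace $a_{i}$ by $1$ directly (legal since $1 = a_i + \alpha|d_i| \geq 0$), and note this simultaneously decreases both $n$ and $|\underline{a}|$ by $a_{i}-1$, preserving the relation $n = |\underline{a}| + k$ at every stage with the current $k$.

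After all of $a_{k},\ldots,a_{1}$ are reduced to $1$, we are left with $\chi[\mathfrak{q}^{C}_{r}(r\mid\underline{a})] = \chi[\mathfrak{q}^{C}_{m}(m\mid \underbrace{1,\ldots,1}_{k})]$ for $m = k + a_{k+1} = 2k$ (the new ambient rank equals the new $|\underline{a}|=k$ plus the unchanged $a_{k+1}=k$). Now $|\underline{a}| = k \leq [\tfrac{m}{2}] = k$, so Lemme \ref{lemM} applies and gives $\chi = \sum_{j=1}^{k}[\tfrac{1}{2}] + [\tfrac{m-2k}{2}] = 0$. Hence $\mathfrak{q}^{C}_{r}(r\mid\underline{a})$ is a Frobenius subalgebra of $\mathfrak{sp}(2r)$.

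**Main obstacle.** The delicate point is the inductive verification that at the moment index $i$ is processed one has $a_{i}\equiv 1\pmod{|d_{i}|}$ (equivalently that $d_{i}$ divides $\alpha_{i}(a_{i+1}+\cdots+a_{k+1}-i+1)$), and the handling of the degenerate case $d_{i}=0$: when $d_{i}=0$ at that stage we instead have $a_{i+1}+\cdots+a_{k+1}-i+1 = a_{i}$ after one rewriting, forcing $a_i = 2k-i - (a_1+\cdots+a_{i-1})$, and part 2 of Théorème \ref{thm0'} peels off a summand $a_{i}$ while dropping $a_{i}$ from the list and lowering $n$ by $a_{i}$; one must check that the leftover parabolic then still telescopes to index $0$ and that the peeled-off $a_{i}$ is forced to be $0$, i.e. does not actually occur for $i\le k$ under the stated hypotheses — or, more robustly, reorganize the induction so that the identity $a_{i+1}+\cdots+a_{k+1}-i+1 = (a_{i+1}+\cdots+a_{k})+k-i+1$ is recognized as exactly the tail quantity $(a_{i+1}+\cdots+a_{k+1})-(i-1)$ appearing in $d_{i}$ up to sign, making the divisibility transparent. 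Once this arithmetic identity is pinned down, everything else is a routine unwinding of Théorème \ref{thm0'} and Lemme \ref{lemM}.
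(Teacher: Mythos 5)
Your overall strategy --- iterate Th\'eor\`eme \ref{thm0'} until the composition collapses to a string of $1$'s and then invoke le lemme \ref{lemM} --- is the same as the paper's, but you run the reduction in the wrong direction, and the ``key arithmetic fact'' you defer to the end is actually false in that direction. You process $i=k,k-1,\ldots,1$ and need $a_i\equiv 1\pmod{|d_i|}$ at the moment index $i$ is treated, where your own computation gives the current $|d_i|=|(a_1+\cdots+a_{i-1})-(2k-i)|$, a quantity still involving the original (large) head entries $a_1,\ldots,a_{i-1}$. Nothing in the definition $a_i=1+\alpha_i(a_{i+1}+\cdots+a_{k+1}-i+1)$ relates $a_i-1$ to that quantity. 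Concretely, take $k=2$ and $\alpha_1=\alpha_2=1$: then $a_3=2$, $a_2=2$, $a_1=5$, $r=9$; treating $i=2$ first gives $d_2=a_1-a_3=3$, and $a_2-1=1$ is not a multiple of $3$, so $a_2$ cannot be brought to $1$ and your induction stalls at the very first step.

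The reduction must instead be carried out for $i=1,2,\ldots,k$ in increasing order (even though the $a_i$ themselves are defined by a backward recursion). Once $a_1,\ldots,a_{i-1}$ have each been replaced by $1$, the head sum equals $i-1$ while the tail $a_{i+1},\ldots,a_k$ and $a_{k+1}=k$ are untouched, so at that stage $d_i=(i-1)-(a_{i+1}+\cdots+a_{k+1})=-s_i$ where $s_i:=a_{i+1}+\cdots+a_{k+1}-i+1$; then $a_i=1+\alpha_i s_i$ gives $a_i[|d_i|]=1$ for $i\leq k-1$ (where $s_i\geq 2(k-i)+1\geq 3$) and $a_k[|d_k|]=a_k[1]=0$. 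This identity $s_i=|d_i|$ at the right moment is precisely the content of the paper's one-line proof, and it lands on $\chi[\mathfrak{q}^{C}_{2k-1}(2k-1\mid \underbrace{1,\ldots,1}_{k-1})]=0$ by le lemme \ref{lemM}. Your final application of le lemme \ref{lemM} is fine (whether one ends with $k$ or $k-1$ ones the index vanishes); the genuine gap is the order of elimination and the unproved --- indeed false --- congruence your ordering relies on.
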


\begin{proof}
 Soit $s_{i}=a_{i+1}+\dots+a_{k+1}-i+1, \;1\leq i\leq k$, alors $a_{i}[s_{i}]=1$. Utilisant la r\'eduction du th\'eor\`eme \ref{thm0'}, on obtient 
 $$\chi[\mathfrak{q}^{C}_{r}(r\mid\underline{a})]=\chi[\mathfrak{q}^{C}_{2k-1}(2k-1\mid \underbrace{1,\ldots,1}_{k-1})]$$
 Il suit du lemme \ref{lemM} que l'on a
  $$\chi[\mathfrak{q}^{C}_{r}(r\mid\underline{a})]=0$$
\end{proof}

\begin{lem}\label{lem3}
Soient $\underline{a}=(a_{1},\ldots,a_{k})$ et $\underline{b}=(b_{1},\ldots,b_{t})$ deux compositions v\'erifiant $|\underline{b}|\leq|\underline{a}|\leq n$. Supposons $a_{1}>b_{1}$, on a :
$$\chi[\mathfrak{q}^{C}_{n}(\underline{a}\mid\underline{b})]=\chi[\mathfrak{q}^{C}_{n-b_{1}}(a_{1}-b_{1}-a_{1}[a_{1}-b_{1}],a_{1}[a_{1}-b_{1}],a_{2},\ldots,a_{k}\mid b_{2}\ldots,b_{t})]$$.
\end{lem}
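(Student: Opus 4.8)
Le plan est de ramener les deux membres de l'\'egalit\'e \`a une m\^eme sous-alg\`ebre biparabolique de la forme $\mathfrak{q}^{C}_{m}(m\mid\cdot)$, en n'utilisant que le lemme \ref{lem0'}, le corollaire \ref{cor1} et le th\'eor\`eme \ref{thm0'}. On observe d'abord le fait arithm\'etique suivant : comme $a_{1}-(a_{1}-b_{1})=b_{1}$, on a $r:=a_{1}[a_{1}-b_{1}]=b_{1}[a_{1}-b_{1}]$ et $0\leq r<a_{1}-b_{1}$ ; en particulier $a_{1}-b_{1}-r$ est un entier strictement positif (si $r=0$, le bloc correspondant est simplement supprim\'e des compositions, selon la convention de suppression des blocs nuls) et $a_{1}-r$ est un multiple de $a_{1}-b_{1}$.

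Premi\`ere \'etape : normalisation des deux membres. Puisque $|\underline{b}|\leq|\underline{a}|\leq n$, le lemme \ref{lem0'} puis le corollaire \ref{cor1} 2) (le bloc $2|\underline{a}|$ \'etant $\geq|\underline{a}|+|\underline{b}|$) donnent
$$\chi[\mathfrak{q}^{C}_{n}(\underline{a}\mid\underline{b})]=\chi[P_{0}]+(n-|\underline{a}|),\qquad P_{0}:=\mathfrak{q}^{C}_{2|\underline{a}|}(2|\underline{a}|\mid a_{k},\ldots,a_{1},b_{1},\ldots,b_{t}).$$
De la m\^eme mani\`ere, les compositions du membre de droite ayant pour sommes $|\underline{a}|-b_{1}$ et $|\underline{b}|-b_{1}$, on obtient
$$\chi[\mathfrak{q}^{C}_{n-b_{1}}(a_{1}-b_{1}-r,r,a_{2},\ldots,a_{k}\mid b_{2},\ldots,b_{t})]=\chi[P_{*}]+(n-|\underline{a}|),$$
$$P_{*}:=\mathfrak{q}^{C}_{2(|\underline{a}|-b_{1})}(2(|\underline{a}|-b_{1})\mid a_{k},\ldots,a_{2},r,a_{1}-b_{1}-r,b_{2},\ldots,b_{t}),$$
la constante additive \'etant, elle aussi, \'egale \`a $(n-b_{1})+(|\underline{a}|-b_{1})-2(|\underline{a}|-b_{1})=n-|\underline{a}|$. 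Il resterait donc \`a \'etablir $\chi[P_{0}]=\chi[P_{*}]$.

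Deuxi\`eme \'etape : comparaison de $P_{0}$ et $P_{*}$ via le th\'eor\`eme \ref{thm0'}. On ne peut pas passer directement de $P_{0}$ \`a $P_{*}$, car ce passage remplacerait deux blocs par deux blocs ; on r\'eduit donc les deux sous-alg\`ebres \`a une m\^eme sous-alg\`ebre interm\'ediaire. Dans $P_{0}$, le bloc terminal virtuel du th\'eor\`eme \ref{thm0'} vaut $|\underline{a}|-|\underline{b}|$, et l'entier $d_{i}$ attach\'e au bloc $b_{1}$ vaut $b_{1}$ ; appliquer le th\'eor\`eme \ref{thm0'} 1) avec $\alpha=-1$ annule ce bloc, d'o\`u $\chi[P_{0}]=\chi[\mathfrak{q}^{C}_{2|\underline{a}|-b_{1}}(2|\underline{a}|-b_{1}\mid a_{k},\ldots,a_{2},a_{1},b_{2},\ldots,b_{t})]$. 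Dans cette derni\`ere sous-alg\`ebre, l'entier $d_{i}$ attach\'e au bloc $a_{1}$ vaut $b_{1}-a_{1}$, de sorte que le th\'eor\`eme \ref{thm0'} 1) (sous la forme ``en particulier'') donne $\chi[P_{0}]=\chi[P_{2}]$, o\`u $P_{2}:=\mathfrak{q}^{C}_{M}(M\mid a_{k},\ldots,a_{2},r,b_{2},\ldots,b_{t})$ et $M:=2|\underline{a}|-b_{1}-(a_{1}-r)$. De l'autre c\^ot\'e, dans $P_{*}$ l'entier $d_{i}$ attach\'e au bloc $a_{1}-b_{1}-r$ vaut $-(a_{1}-b_{1}-r)$ ; le th\'eor\`eme \ref{thm0'} 1) avec $\alpha=-1$ l'annule et ram\`ene l'indice sup\'erieur \`a $2(|\underline{a}|-b_{1})-(a_{1}-b_{1}-r)=M$, ce qui redonne exactement $P_{2}$. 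On conclut $\chi[P_{0}]=\chi[P_{2}]=\chi[P_{*}]$, puis l'\'egalit\'e annonc\'ee.

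L'obstacle principal ne sera pas conceptuel mais calculatoire : il faudra contr\^oler avec soin, \`a chaque \'etape, les entiers $d_{i}$ et les indices sup\'erieurs des sous-alg\`ebres mises en jeu (et en particulier v\'erifier qu'elles restent de la forme $\mathfrak{q}^{C}_{m}(m\mid\cdot)$ avec $m$ au moins \'egal \`a la somme de la composition), ainsi que traiter les cas d\'eg\'en\'er\'es $r=0$, $t=1$ ou $k=1$ au moyen de la convention de suppression des blocs nuls ; tout repose en fait sur le lemme \ref{lem0'}, le corollaire \ref{cor1} et le th\'eor\`eme \ref{thm0'}, le pr\'esent lemme en d\'ecoulant par un encha\^inement bien choisi de r\'eductions.
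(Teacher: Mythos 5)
Votre preuve est correcte et suit pour l'essentiel la m\^eme strat\'egie que celle du papier : normalisation par le lemme \ref{lem0'} et le corollaire \ref{cor1}, puis \'elimination du bloc $b_{1}$ (dont le $d_{i}$ vaut $b_{1}$) et r\'eduction de $a_{1}$ modulo $a_{1}-b_{1}$ via le th\'eor\`eme \ref{thm0'}. La seule diff\'erence est d'organisation : le papier encha\^ine les r\'eductions en une seule cha\^ine (en r\'eins\'erant le bloc $a_{1}-b_{1}-r$ puis en appliquant le lemme \ref{lem0'} en sens inverse), tandis que vous faites converger les deux membres vers l'interm\'ediaire commun $P_{2}$, ce qui revient au m\^eme.
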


\begin{proof}
D'apr\`es le corollaire \ref{cor1}, on peut supposer que $|\underline{a}|=n$. Utilisant le lemme \ref{lem0'} et le th\'eor\`eme \ref{thm0'}, on a :\\
\begin{align*}
\chi[\mathfrak{q}^{C}_{n}(\underline{a}\mid\underline{b})]&=\chi[\mathfrak{q}^{C}_{2n}(2n\mid a_{k},\ldots,a_{1},\underline{b})]\\
&=\chi[\mathfrak{q}^{C}_{2n-b_{1}}(2n-b_{1}\mid a_{k},\ldots,a_{1},b_{2},\ldots,b_{t})]\\
&=\chi[\mathfrak{q}^{C}_{2n-b_{1}-a_{1}+a_{1}[a_{1}-b_{1}]}(2n-b_{1}-a_{1}+a_{1}[a_{1}-b_{1}]\mid a_{k},\ldots,a_{2},a_{1}[a_{1}-b_{1}],b_{2},\ldots,b_{t})]\\
&=\chi[\mathfrak{q}^{C}_{2n-2b_{1}}(2n-2b_{1}\mid a_{k},\ldots,a_{2},a_{1}[a_{1}-b_{1}],a_{1}-b_{1}-a_{1}[a_{1}-b_{1}],b_{2},\ldots,b_{t})]\\
&=\chi[\mathfrak{q}^{C}_{n-b_{1}}(a_{1}-b_{1}-a_{1}[a_{1}-b_{1}],a_{1}[a_{1}-b_{1}],a_{2},\ldots,a_{k}\mid b_{2}\ldots,b_{t})]\\
\end{align*}
\end{proof}

\begin{lem}\label{lem3'}
Soient $\underline{a}=(a_{1},\ldots,a_{k})$ et $\underline{b}=(b_{1},\ldots,b_{t})$ deux compositions v\'erifiant $|\underline{b}|=|\underline{a}|=n$. Supposons $a_{1}>b_{1}$, on a :
$$\chi[\mathfrak{q}^{A}(\underline{a}\mid\underline{b})]=\chi[\mathfrak{q}^{A}(a_{1}-b_{1}-a_{1}[a_{1}-b_{1}],a_{1}[a_{1}-b_{1}],a_{2},\ldots,a_{k}\mid b_{2}\ldots,b_{t})]$$.
\end{lem}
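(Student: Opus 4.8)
The plan is to reduce the statement to its symplectic analogue, Lemma~\ref{lem3}, using the comparison between the type $A$ and type $C$ indices provided by Corollary~\ref{cor1}~(4). Since $|\underline{a}|=|\underline{b}|=n$, I would apply Corollary~\ref{cor1}~(4) with $i=k$, $j=t$ (so that $a_{1}+\dots+a_{k}=n=b_{1}+\dots+b_{t}$); its special case for $|\underline{a}|=|\underline{b}|$ gives
$$\chi[\mathfrak{q}^{C}_{n}(\underline{a}\mid\underline{b})]=\chi[\mathfrak{q}^{A}(\underline{a}\mid\underline{b})]+n-|\underline{a}|=\chi[\mathfrak{q}^{A}(\underline{a}\mid\underline{b})].$$

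Next, the hypotheses $|\underline{b}|\le|\underline{a}|\le n$ and $a_{1}>b_{1}$ of Lemma~\ref{lem3} hold, so, setting $\underline{a}''=(a_{1}-b_{1}-a_{1}[a_{1}-b_{1}],\,a_{1}[a_{1}-b_{1}],\,a_{2},\ldots,a_{k})$ and $\underline{b}''=(b_{2},\ldots,b_{t})$, Lemma~\ref{lem3} yields
$$\chi[\mathfrak{q}^{C}_{n}(\underline{a}\mid\underline{b})]=\chi[\mathfrak{q}^{C}_{n-b_{1}}(\underline{a}''\mid\underline{b}'')].$$
A short computation gives $|\underline{a}''|=(a_{1}-b_{1})+(a_{2}+\dots+a_{k})=n-b_{1}$ and $|\underline{b}''|=b_{2}+\dots+b_{t}=n-b_{1}$, hence $|\underline{a}''|=|\underline{b}''|=n-b_{1}$, so $\mathfrak{q}^{A}(\underline{a}''\mid\underline{b}'')$ is a genuine biparabolic subalgebra of $\mathfrak{gl}(n-b_{1})$.

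It then remains to apply Corollary~\ref{cor1}~(4) once more, now to the pair $(\underline{a}'',\underline{b}'')$ with ambient rank $n-b_{1}$, obtaining $\chi[\mathfrak{q}^{C}_{n-b_{1}}(\underline{a}''\mid\underline{b}'')]=\chi[\mathfrak{q}^{A}(\underline{a}''\mid\underline{b}'')]$, and chaining the three equalities gives exactly $\chi[\mathfrak{q}^{A}(\underline{a}\mid\underline{b})]=\chi[\mathfrak{q}^{A}(\underline{a}''\mid\underline{b}'')]$, which is the asserted identity. I expect no genuine obstacle here; the only point demanding a line of care is the verification that $|\underline{a}''|=|\underline{b}''|$, which is what legitimises both invocations of Corollary~\ref{cor1}~(4). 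If one prefers an argument that never leaves $\mathfrak{gl}$, the chain of reductions in the proof of Lemma~\ref{lem3} transcribes verbatim, with Lemma~\ref{lem0} and Theorem~\ref{thm0} playing the roles of Lemma~\ref{lem0'} and Theorem~\ref{thm0'}, the bookkeeping of the integers $d_{i}$ being identical.
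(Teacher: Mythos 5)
Your proof is correct and is essentially the paper's own argument: the paper's proof of this lemma is the one-line remark that it follows from Lemma~\ref{lem3} and Corollary~\ref{cor1}(4), which is exactly the chain of three equalities you spell out (including the key verification that $|\underline{a}''|=|\underline{b}''|=n-b_{1}$, which legitimises the second application of Corollary~\ref{cor1}(4)).
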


\begin{proof}
R\'esulte du lemme pr\'ec\'edent et du corollaire \ref{cor1}(4).
\end{proof}

\begin{theo} Soient $(a,b,n)\in(\mathbb{N}^{\times})^{3}$ tel que $b\leq a\leq n$. Alors, l'indice de $\mathfrak{q}^{C}_{n}(a\mid b)$ est donn\'e par \\
$$\chi[\mathfrak{q}^{C}_{n}(a\mid b)]=\begin{cases} n\;si\;a=b\\
[\frac{a[a-b]}{2}]+[\frac{a-b-a[a-b]}{2}]+n-a\;si\;a\neq b\\
\end{cases}$$

\end{theo}
\begin{proof}

Supposons $a=b$, le r\'esultat se d\'eduit de l'assertion 4) du corollaire \ref{cor1}. Supposons $a>b$, il suit du corolaire \ref{cor1}, qu'on peut supposer $a=n$. Utilisons le lemme \ref{lem3}, on a
$$\chi[\mathfrak{q}^{C}_{n}(a\mid b)]=\chi[\mathfrak{q}^{C}_{a-b}(a-b-a[a-b],a[a-b]\mid\varnothing)$$ 
D'o\`u le r\'esultat.
\end{proof}

\begin{lem}\label{lemP}
Soit $\mathfrak{q}^{C}_{n}(n\mid\underline{a})$ une sous-alg\`ebre biparabolique de $\mathfrak{sp}(2n)$ o\`u $\underline{a}=(a_{1},\ldots,a_{k})$ est une composition v\'erifiant $|\underline{a}|\leq n$. Soient $s=n-|\underline{a}|$ et $\underline{a}^{'}=(a_{1},\ldots,a_{k},s)$.Consid\'erons $\mathfrak{q}^{A}(n\mid\underline{a}^{'})$ la sous-alg\`ebre biparabolique de $\mathfrak{gl}(n)$ associ\'ee \`a la paire de compositions $(n,\underline{a}^{'})$. Alors, il existe $\alpha\in\mathbb{N}$ et une composition $\underline{c}=(c_{1},\ldots,c_{j})$ v\'erifiant $j\leq k$ et $|\underline{c}|\leq s$ tels que l'on a
$$\chi[\mathfrak{q}^{C}_{n}(n\mid\underline{a})]=\alpha+\chi[\mathfrak{q}^{C}_{s+|\underline{c}|}(s+|\underline{c}|\mid\underline{c})]$$
$$\chi[\mathfrak{q}^{A}(n\mid\underline{a}^{'})]=\alpha+\chi[\mathfrak{q}^{A}(s+|\underline{c}|\mid\underline{c},s)]$$
\end{lem}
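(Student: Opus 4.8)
The plan is to run the two reduction algorithms of Theorems \ref{thm0'} and \ref{thm0} in parallel, keeping the $C$-meander and the $A$-meander synchronized. First note that, with $\underline{a}^{'}=(a_{1},\ldots,a_{k},s)$, the meander of $\mathfrak{q}^{A}(n\mid\underline{a}^{'})$ and that of $\mathfrak{p}^{A}(a_{1},\ldots,a_{k},s)=\mathfrak{q}^{A}(a_{1},\ldots,a_{k},s\mid n)$ differ only by the reflection exchanging arcs above and below the line $D$, so by Theorem \ref{thm1} they have the same index; hence I may replace $\mathfrak{q}^{A}(n\mid\underline{a}^{'})$ by $\mathfrak{p}^{A}(a_{1},\ldots,a_{k},s)$ throughout, and likewise $\mathfrak{q}^{A}(s+|\underline{c}|\mid\underline{c},s)$ by $\mathfrak{p}^{A}(\underline{c},s)$. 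The crucial point is that the integers governing the two reductions coincide on the indices that matter: for $\mathfrak{q}^{C}_{n}(n\mid\underline{a})$ one puts $a_{k+1}=n-|\underline{a}|=s$ and $d_{i}=(a_{1}+\cdots+a_{i-1})-(a_{i+1}+\cdots+a_{k+1})$, whereas for the $(k+1)$-part composition $(a_{1},\ldots,a_{k},s)$ Theorem \ref{thm0} works with $d_{i}^{'}=(a_{1}+\cdots+a_{i-1})-(a_{i+1}+\cdots+a_{k}+s)$ for $1\le i\le k$. Since $a_{k+1}=s$, this gives $d_{i}=d_{i}^{'}$ for all $1\le i\le k$. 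Therefore, applying the reduction of Theorem \ref{thm0'} to $\mathfrak{q}^{C}_{N}(N\mid\underline{u})$ at an index $i$ not exceeding the length of $\underline{u}$, and simultaneously the reduction of Theorem \ref{thm0} to $\mathfrak{p}^{A}(\underline{u},s)$ at the same index $i$ (so that the appended part $s$ is never touched), again yields a pair of the same shape, $\mathfrak{q}^{C}_{N'}(N'\mid\underline{u}^{'})$ and $\mathfrak{p}^{A}(\underline{u}^{'},s)$, with the relation $N'-|\underline{u}^{'}|=s$ preserved.

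I would then argue by induction on $|\underline{a}|$. If $|\underline{a}|\le s$, equivalently $|\underline{a}|\le[\frac{n}{2}]$, take $\alpha=0$ and $\underline{c}=\underline{a}$; both identities are tautologies since $n=|\underline{a}|+s$. If $|\underline{a}|>s$, then $n=|\underline{a}|+s<2|\underline{a}|$, hence $[\frac{n}{2}]<|\underline{a}|$, and Remark \ref{rem} furnishes $1\le i\le k$ with $a_{i}\ge|d_{i}|$. If $d_{i}=0$, assertion 2) of each of Theorems \ref{thm0'} and \ref{thm0} gives
$$\chi[\mathfrak{q}^{C}_{n}(n\mid\underline{a})]=a_{i}+\chi[\mathfrak{q}^{C}_{n-a_{i}}(n-a_{i}\mid a_{1},\ldots,a_{i-1},a_{i+1},\ldots,a_{k})]$$
$$\chi[\mathfrak{p}^{A}(a_{1},\ldots,a_{k},s)]=a_{i}+\chi[\mathfrak{p}^{A}(a_{1},\ldots,a_{i-1},a_{i+1},\ldots,a_{k},s)]$$
and the composition $(a_{1},\ldots,a_{i-1},a_{i+1},\ldots,a_{k})$ has sum $|\underline{a}|-a_{i}<|\underline{a}|$ and still satisfies $(n-a_{i})-(|\underline{a}|-a_{i})=s$, so by induction there are $\alpha_{0}$ and $\underline{c}$ working for it, and one takes $\alpha=a_{i}+\alpha_{0}$. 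If $d_{i}\ne0$, the particular cases of assertions 1) give
$$\chi[\mathfrak{q}^{C}_{n}(n\mid\underline{a})]=\chi[\mathfrak{q}^{C}_{n'}(n'\mid a_{1},\ldots,a_{i-1},a_{i}[|d_{i}|],a_{i+1},\ldots,a_{k})]$$
together with the matching identity for $\mathfrak{p}^{A}$, where $n'=n-a_{i}+a_{i}[|d_{i}|]$ and $n'-(\text{new sum})=s$; since $a_{i}\ge|d_{i}|\ge1$ forces $a_{i}[|d_{i}|]<a_{i}$, the new sum is again $<|\underline{a}|$, and the induction hypothesis applies with the same $\alpha$. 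In all cases the number of parts never increases (a part may disappear when some $a_{i}[|d_{i}|]$ vanishes, or when it is deleted), so the final $\underline{c}$ has at most $k$ parts and $|\underline{c}|\le s$, which is what we want.

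The argument is really just bookkeeping once the identity $d_{i}=d_{i}^{'}$ $(1\le i\le k)$ is observed; the single genuine point to verify is that the reduction can always be carried out at an index inside $\{1,\ldots,k\}$, i.e.\ without ever disturbing the appended part $s$ — this is exactly what Remark \ref{rem} provides when $|\underline{a}|>s$, and it is what keeps the two meanders synchronized, so that the same $\alpha$ and the same $\underline{c}$ serve for both algebras. The degenerate case $s=0$ (where $\underline{a}^{'}=\underline{a}$ and $\underline{c}=\varnothing$) may alternatively be dispatched at once by Corollary \ref{cor1}(4), which gives $\chi[\mathfrak{q}^{C}_{n}(n\mid\underline{a})]=\chi[\mathfrak{q}^{A}(n\mid\underline{a})]$, so one takes $\alpha$ to be this common value.
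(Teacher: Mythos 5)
Your proposal is correct and follows essentially the same route as the paper: the base case $2|\underline{a}|\leq n$ with $\alpha=0$, $\underline{c}=\underline{a}$, then induction on $|\underline{a}|$ using Lemma 2.4 of \cite{MB1} (Remarque \ref{rem}) to find an index $i\leq k$ with $a_{i}\geq|d_{i}|$ and running the reductions of Theorems \ref{thm0} and \ref{thm0'} in parallel. Your explicit verification that the reduction parameters $d_{i}$ agree for the two algebras (so that the same $\alpha$ and $\underline{c}$ serve both) is exactly the synchronization the paper leaves implicit.
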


\begin{proof}
Si $2|\underline{a}|\leq n$, il suffit de consid\'erer $\alpha=0$ et $\underline{c}=\underline{a}$. Supposons que $2|\underline{a}|>n$, il suit du lemme 2.4 \cite{MB1} qu'il existe $1\leq i \leq k$ tel que $a_{i}\geq |d_{i}|$ (voir la d\'efinition des $d_{i}$ dans les th\'eor\`emes \ref{thm0} et \ref{thm0'}). Il r\'esulte alors des th\'eor\`emes \ref{thm0} et \ref{thm0'} que le r\'esultat s'obtient par r\'ecurrence sur $|\underline{a}|$.

\end{proof}

\begin{theo}

Soient $\underline{a}=(a_{1},\ldots,a_{k})$ et $\underline{b}=(b_{1},\ldots,b_{t})$ deux compositions v\'erifiant $|\underline{b}|\leq|\underline{a}|=n$, et $s=|\underline{a}|-|\underline{b}|$. Supposons $k+t<s$, alors $\mathfrak{q}^{C}_{n}(\underline{a}\mid\underline{b})$ n'est pas une sous-alg\`ebre de Frobenius de $\mathfrak{sp}(2n)$.

\end{theo}

\begin{proof}
Utilisant le lemme \ref{lem0'} et le th\'eor\`eme \ref{thm0'}, on a :\\
$$\chi[\mathfrak{q}^{C}_{n}(\underline{a}\mid\underline{b})]=\chi[\mathfrak{q}^{C}_{2n-a_{1}}(2n-a_{1}\mid a_{k},\ldots,a_{2},\underline{b})]$$
Supposons que la sous-alg\`ebre $\mathfrak{q}^{C}_{n}(\underline{a}\mid\underline{b})$ de $\mathfrak{sp}(2n)$ est de Frobenius. Il suit du lemme \ref{lemP} qu'il existe une composition $\underline{c}=(c_{1},\ldots,c_{j})$ telle que $j\leq k+t-1<s-1$, $|\underline{c}|\leq s$ et $\chi[\mathfrak{q}^{C}_{s+|\underline{c}|}(s+|\underline{c}|\mid \underline{c})]=0$. Comme $|\underline{c}|\leq [\frac{s+|\underline{c}|}{2}]$, il suit du lemme \ref{lemM} que $c_{i}=1,\;1\leq i\leq j$ et qu'il existe $\epsilon\in\{0,1\}$ tel que $j=|\underline{c}|=s-\epsilon$. On en d\'eduit que $j\geq s-1$ : une contradiction.
\end{proof}

\begin{theo}\label{I3} Soient $(a,b,c)\in\mathbb{N}^{\times}$. On pose $\max(a+b,c)=n$, $p=(a+b)\wedge(b+c)$ et $r=|a+b-c|$. On a :
\begin{itemize}
\item[1)] Si $p> r$, alors $\chi(\mathfrak{q}^{C}_{n}(a,b\mid c))=p-r+[\frac{r}{2}]$.
\item[2)] Si $p\leq r$, on a :\\
$\chi[\mathfrak{q}^{C}_{n}(a,b\mid c)]=[\frac{r}{2}]$ si $p$ et $r$ de m\^eme parit\'e.\\
$\chi[\mathfrak{q}^{C}_{n}(a,b\mid c)]=[\frac{r}{2}]-1$ sinon. 
\end{itemize}

\end{theo}

\begin{proof}

Supposons $c\leq a+b=n$. Il suit du lemme \ref{lem0'} et du th\'eor\`eme \ref{thm0'}  que l'on a

 $$\chi[\mathfrak{q}^{C}_{n}(a,b\mid c)]=\chi[\mathfrak{q}^{C}_{2n}(2n\mid b,a,c)]=\chi[\mathfrak{q}^{C}_{b+c+r}(b+c+r\mid b,c)]$$ 
Supposons $r=0$, il suit du corollaire \ref{cor1} (4) et du th\'eor\`eme \ref{ind} que l'on a $$\chi[\mathfrak{q}^{C}_{b+c+r}(b+c+r\mid b,c)]=\chi[\mathfrak{q}^{A}(b+c\mid b,c)]=b\wedge c=p$$
 Supposons dans la suite que $r\neq 0$ et rappelons les deux propri\'et\'es suivantes qui nous seront utiles apr\`es,

$\mathcal{P}:$ Tout triplet $(x,y,z)\in\mathbb{N}^{3}$ v\'erifie l'une des conditions suivantes
\begin{itemize}
\item[i)]$x+y\leq z$
\item[ii)]$x\geq y+z$
\item[iii)]$y>|x-z|$
\end{itemize}

$\mathcal{P}^{'}:$ Soit $z\in\mathbb{N}^{\times}$,  pour toute paire $(x,y)\in\mathbb{N}^{2}$, il existe $(x^{'},y^{'})\in\mathbb{N}^{2}$ v\'erifiant 
 $$\begin{cases}(x^{'}+y^{'})\wedge(y^{'}+z)=(x+y)\wedge(y+z)=:q\\
 x^{'}=z\;et\;y^{'}=q-z\;si\;q> z\\
 x^{'}+y^{'}\leq z\;si\;q\leq z\\
\end{cases}$$ \\
tel que l'on a $\chi[\mathfrak{q}^{C}_{x+y+z}(x+y+z\mid x,y)]=\chi[\mathfrak{q}^{C}_{x^{'}+y^{'}+z}(x^{'}+y^{'}+z\mid x^{'},y^{'})]$

La propri\'et\'e $\mathcal{P}$ est \'evidente. Montrons la propri\'et\'e $\mathcal{P}^{'}$ par r\'ecurrence sur la valeur de $x+y$.

Supposons $x+y\leq z$, en particulier $q\leq z$. Le r\'esultat est vrai avec $x=x^{'}$ et $y=y^{'}$. 
Supposons $x+y>z$, il suit de la propri\'et\'e $\mathcal{P}$ que l'on a $x\geq y+z$ ou $y> |x-z|$. Si $x=z$, en particulier $q=x+y=y+z>z$. Le r\'esultat est encore vrai avec $x=x^{'}$ et $y=y^{'}$. Si $x\neq z$, il suit du th\'eor\`eme \ref{thm0'} que l'on a
$$
\chi[\mathfrak{q}^{C}_{x+y+z}(x+y+z\mid x,y)]=\begin{cases}\chi[\mathfrak{q}^{C}_{x[|y+z|]+y+z}(x[|y+z|]+y+z\mid x[|y+z|],y)]\;si\;x\geq y+z\\
\chi[\mathfrak{q}^{C}_{x+y[|x-z|]+z}(x+y[|x-z|]+z\mid x,y[|x-z|])]\;si\;y>|x-z|
\end{cases}$$
Remarquons que $(x[|y+z|]+y)\wedge(y+z)=(x+y[|x-z|])\wedge(y[|x-z|]+z)=q$. Il suffit alors d'appliquer l'hypoth\`ese de r\'ecurrence \`a la paire
 $$(x_{1},y_{1})=\begin{cases} (x[|y+z|],y)\;si\;x\geq y+z\\
(x,y[|x-z|])\;si\;y>|x-z|
\end{cases}$$
Il suit de ce qui pr\'ec\`ede qu'il existe $(b^{'},c^{'})\in\mathbb{N}^{2}$ v\'erifiant 
 $$\begin{cases}(b^{'}+c^{'})\wedge(c^{'}+r)=(b+c)\wedge(c+r)=p\\
 b^{'}=r\;et\;c^{'}=p-r\;si\;p> r\\
 b^{'}+c^{'}\leq r\;si\;p\leq r\\
\end{cases}$$ \\
tel que l'on a $\chi[\mathfrak{q}^{C}_{n}(a,b\mid c)]=\chi[\mathfrak{q}^{C}_{b^{'}+c^{'}+r}(b^{'}+c^{'}+r\mid b^{'},c^{'})]$.\\ 
Supposons $p> r$, en particulier $b'=r$ et $c^{'}=p-r$. Il suit du th\'eor\`eme \ref{thm0'} et du lemme \ref{lemM} que l'on a 
 $$\chi[\mathfrak{q}^{C}_{n}(a,b\mid c)]=\chi[\mathfrak{q}^{C}_{b^{'}+c^{'}+r}(b^{'}+c^{'}+r\mid b^{'},c^{'})]=c^{'}+[\frac{r}{2}]=p-r+[\frac{r}{2}]$$
Supposons $p\leq r$, en particulier $b^{'}+c^{'}\leq r$. Il suit du lemme \ref{lemM} que l'on a 
  $$\chi[\mathfrak{q}^{C}_{n}(a,b\mid c)]=[\frac{b^{'}}{2}]+[\frac{c^{'}}{2}]+[\frac{r-b^{'}-c^{'}}{2}]$$
On distingue deux cas\\
Si $p$ est pair, les entiers  $c^{'},b^{'}\;et\;r$ sont alors de m\^eme parit\'e. En particulier, on a
 $$\chi[\mathfrak{q}^{C}_{n}(a,b\mid c)]=\begin{cases}[\frac{r}{2}]\;si\;r\;est\;pair\\
[\frac{r}{2}]-1\;si\;r\;est\;impair\\
\end{cases}$$ 
Si $p$ est impair, alors parmi les entiers $c^{'},b^{'}\;et\;r-b^{'}-c^{'}$, il y a un entier impair et un autre pair. En particulier, on a
$$\chi[\mathfrak{q}^{C}_{n}(a,b\mid c)]=\begin{cases}[\frac{r}{2}]-1\;si\;r\;est\;pair\\
[\frac{r}{2}]\;si\;r\;est\;impair\\
\end{cases}$$ 
Supposons $ a+b\leq c=n$. Il suit du corollaire \ref{cor1} que l'on a $\chi[\mathfrak{q}^{C}_{n}(a,b\mid c)]=\chi[\mathfrak{q}^{C}_{n}(c\mid a,b)]$. Remarquons que $p=(a+b)\wedge (b+r)$, le r\'esultat se d\'eduit de ce qui pr\'ec\`ede.
\end{proof}

\begin{cor}\label{J3} Soit $(a,b,c)\in\mathbb{N}^{\times}$, $\mathfrak{q}^{C}_{n}(a,b\mid c)$ est une sous-alg\`ebre de Frobenius de $\mathfrak{sp}(2n)$ si et seulement si $\max(a+b,c)=n$, et de plus, l'une des conditions suivantes est v\'erifi\'ee :
\begin{itemize}
\item[1)]$r=1$ et $p=1$
\item[2)]$r=2$ et $p=1$
\item[3)]$r=3$ et $p=2$
\end{itemize} 

\end{cor}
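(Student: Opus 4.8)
The plan is to reduce at once to the case $n=\max(a+b,c)$ and then simply read the answer off Theorem~\ref{I3}. First I would apply assertion~2) of Corollary~\ref{cor1} to the pair of compositions $((a,b),(c))$, which gives
$$\chi[\mathfrak{q}^{C}_{n}(a,b\mid c)]=\chi[\mathfrak{q}^{C}_{s}(a,b\mid c)]+n-s,\qquad s:=\max(a+b,c).$$
Since both summands on the right are nonnegative, $\mathfrak{q}^{C}_{n}(a,b\mid c)$ is a Frobenius subalgebra if and only if $n=s$ \emph{and} $\chi[\mathfrak{q}^{C}_{s}(a,b\mid c)]=0$. This already accounts for the condition $\max(a+b,c)=n$ in the statement, and reduces the problem to deciding for which triples $(a,b,c)$ one has $\chi[\mathfrak{q}^{C}_{s}(a,b\mid c)]=0$.

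Next I would invoke Theorem~\ref{I3} (with $n$ there equal to $s$), after recording the elementary remark that, because $a,b,c\in\mathbb{N}^{\times}$, one has $a+b\geq 2$ and $b+c\geq 2$, hence $p=(a+b)\wedge(b+c)\geq 1$; in particular $p>0$, which is what will rule out the spurious case $r=0$. The discussion then splits along the trichotomy already built into Theorem~\ref{I3}. If $p>r$, then $p\geq r+1$ and the formula gives $\chi=p-r+[\frac r2]\geq 1+[\frac r2]\geq 1$, so nothing is Frobenius in this range. If $p\leq r$ with $p$ and $r$ of the same parity, then $\chi=[\frac r2]$, which vanishes exactly for $r\in\{0,1\}$; since $p\geq 1$ and $r\geq p$, this forces $r=1$, and then same parity together with $p\leq r=1$ forces $p=1$, i.e.\ case~1). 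If $p\leq r$ with $p$ and $r$ of opposite parity, then $\chi=[\frac r2]-1$, which vanishes exactly for $r\in\{2,3\}$: for $r=2$ opposite parity and $p\leq 2$ force $p=1$ (case~2)), and for $r=3$ opposite parity and $p\leq 3$ force $p=2$ (case~3)).

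Finally I would dispose of the converse: in each of the three listed configurations $(r,p)\in\{(1,1),(2,1),(3,2)\}$ one reads off directly from the very same formulas of Theorem~\ref{I3} that $\chi[\mathfrak{q}^{C}_{s}(a,b\mid c)]=0$, so (given $\max(a+b,c)=n$) the three conditions are sufficient as well as necessary. Altogether this is a purely combinatorial bookkeeping argument once Theorem~\ref{I3} and Corollary~\ref{cor1} are in hand; the only step that genuinely needs a moment's attention is the observation $p\geq 1$, without which $r=0,\ p=0$ would slip through in the equal-parity subcase.
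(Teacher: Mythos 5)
Your proposal is correct and follows exactly the route the paper intends: the corollary is stated without an explicit proof as an immediate consequence of Theorem~\ref{I3}, and your reduction via Corollaire~\ref{cor1}(2) to the case $n=\max(a+b,c)$ followed by the case-by-case reading of the index formula (including the observation $p\geq 1$, which excludes $r=0$) is the complete bookkeeping the author leaves to the reader.
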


\begin{theo}
Soient $\underline{a}=(a_{1},\ldots,a_{k})$ et $\underline{b}=(b_{1},\ldots,b_{t})$ deux compositions v\'erifiant $|\underline{b}|\leq|\underline{a}|=n$, et $s=|\underline{a}|-|\underline{b}|$. Consid\'erons $\mathfrak{q}^{A}(\underline{a}\mid \underline{b},s)$ la sous-alg\`ebre biparabolique de $\mathfrak{gl}(n)$ associ\'ee \`a la paire de compositions $(\underline{a},(b_{1},\ldots,b_{t},s))$. Alors,
\begin{itemize}

\item[1)]Il existe une composition $\underline{d}$ de $s$ qui v\'erifie :\\
$\chi[\mathfrak{q}^{C}_{n}(\underline{a}\mid \underline{b})]-\chi[\mathfrak{q}^{A}(\underline{a}\mid \underline{b},s)]=\chi[\mathfrak{q}^{C}_{s}(\underline{d}\mid \varnothing)]-\chi[\mathfrak{q}^{A}(\underline{d}\mid s)]$
\item[2)]
\begin{itemize}
\item[i)] Supposons $\chi[\mathfrak{q}^{A}(\underline{a}\mid \underline{b},s)]=1$, alors $\chi[\mathfrak{q}^{C}_{n}(\underline{a}\mid \underline{b})]=\begin{cases} [\frac{s}{2}]-1\;si\;s\;est\;pair\\
[\frac{s}{2}]\;si\;s\;est\;impair\\
\end{cases}$
\item[ii)]Supposons $\chi[\mathfrak{q}^{C}_{n}(\underline{a}\mid \underline{b})]=0$, alors $\chi[\mathfrak{q}^{A}(\underline{a}\mid \underline{b},s)]=[\frac{s+1}{2}]$
\end{itemize}
\end{itemize}
\end{theo}

\begin{proof}
\begin{itemize}
\item[1)]
Il suit des lemmes \ref{lem0} et \ref{lem0'} que l'on a
$$\chi[\mathfrak{q}^{A}(\underline{a}\mid \underline{b},s)]=\chi[\mathfrak{q}^{A}(2n\mid \underline{a}^{-1}, \underline{b},s)]$$
$$\chi[\mathfrak{q}^{C}_{n}(\underline{a}\mid \underline{b})]=\chi[\mathfrak{q}^{C}_{2n}(2n\mid \underline{a}^{-1}, \underline{b})]$$
o\`u $\underline{a}^{-1}=(a_{k},\ldots,a_{1})$. D'apr\`es le lemme \ref{lemP}, il existe $\alpha\in\mathbb{N}$ et une composition $\underline{c}=(c_{1},\ldots,c_{t})$ v\'erifiant $|\underline{c}|\leq s$ telle que l'on a
  $$\chi[\mathfrak{q}^{A}(2n\mid \underline{a}^{-1}, \underline{b},s)]=\alpha+\chi[\mathfrak{q}^{A}(s+|\underline{c}|\mid \underline{c},s)]$$
 $$\chi[\mathfrak{q}^{C}_{2n}(2n\mid \underline{a}^{-1}, \underline{b})]=\alpha+\chi[\mathfrak{q}^{C}_{s+|\underline{c}|}(s+|\underline{c}|\mid \underline{c})]$$
Soit $\underline{d}=\begin{cases}(s-|\underline{c}|,c_{t},\ldots,c_{1})\;si\;|\underline{c}|<s\\
(c_{t},\ldots,c_{1})\;si\;|\underline{c}|=s\\
\end{cases} $. Puisque $|\underline{c}|\leq s$, alors $|\underline{c}|+s -2(c_{1}+\cdots+c_{i-1})\geq 2c_{i},\;1\leq i\leq t$. En appliquant les lemmes \ref{lem3} et \ref{lem3'} avec $a_{1}=|\underline{c}|+s -2(c_{1}+\cdots+c_{i-1})$ et $b_{1}=c_{i},\;i=1,\ldots,t$, on obtient
 $$\chi[\mathfrak{q}^{A}(s+|\underline{c}|\mid \underline{c},s)]=\chi[\mathfrak{q}^{A}(\underline{d}\mid s)]$$
 $$\chi[\mathfrak{q}^{C}_{s+|\underline{c}|}(s+|\underline{c}|\mid \underline{c})]=\chi[\mathfrak{q}^{C}_{s}(\underline{d}\mid \varnothing)]$$
  
D'o\`u le r\'esultat. 
\item[2)]
\begin{itemize}
\item[i)]
Supposons $\chi[\mathfrak{q}^{A}(\underline{a}\mid \underline{b},s)]=1$, alors $\alpha=0$ et $\chi[\mathfrak{q}^{A}(\underline{d}\mid s)]=1$. Par suite $\chi[\mathfrak{q}^{C}_{n}(\underline{a}\mid \underline{b})]=\chi[\mathfrak{q}^{C}_{s}(\underline{d}\mid \varnothing)]=[\frac{c_{1}}{2}]+\dots+[\frac{c_{t}}{2}]+[\frac{s-|\underline{c}|}{2}]$. D'autre part, il suit du th\'eor\`eme \ref{thm1} que le m\'eandre $\Gamma^{A}(\underline{d}\mid s)$ de $\mathfrak{q}^{A}(\underline{d}\mid s)$ est un segment, ce qui implique que, parmi les entiers $c_{1},\ldots,c_{t},s-|\underline{c}|$ et $s$, il en existe uniquement deux qui sont impairs. En particulier, 
$$\chi[\mathfrak{q}^{C}_{n}(\underline{a}\mid \underline{b})]=\begin{cases} [\frac{s}{2}]-1\;si\;s\;est\;pair\\
[\frac{s}{2}]\;si\;s\;est\;impair\\
\end{cases}$$
\item[ii)]Supposons $\chi[\mathfrak{q}^{C}_{n}(\underline{a}\mid \underline{b})]=0$. Il suit du lemme \ref{lemM} que $c_{i}=1,\;1\leq i\leq r$, $|\underline{c}|=s-\epsilon,\;\epsilon\in\{0,1\}$. Il en r\'esulte que $\chi[\mathfrak{q}^{A}(\underline{a}\mid \underline{b},s)]=\chi[\mathfrak{q}^{A}(\underline{d}\mid s)]=[\frac{s+1}{2}]$. 
\end{itemize}
\end{itemize} 
 \end{proof}
  
\begin{cor}
Soient $\underline{a}=(a_{1},\ldots,a_{k})$ et $\underline{b}=(b_{1},\ldots,b_{t})$ deux compositions v\'erifiant $|\underline{b}|\leq|\underline{a}|= n$. Supposons que $s=|\underline{a}|-|\underline{b}|=1\;ou\;2$, alors $\mathfrak{q}^{C}_{n}(\underline{a}\mid \underline{b})$ est une sous-alg\`ebre de Frobenius de $\mathfrak{sp}(2n)$ si et seulement si $\mathfrak{q}^{A}(\underline{a}\mid \underline{b},s)\cap \mathfrak{sl}(n)$ est une sous-alg\`ebre de Frobenius de $\mathfrak{sl}(n)$.
\end{cor}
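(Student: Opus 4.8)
Le plan est de d\'eduire ce corollaire du th\'eor\`eme pr\'ec\'edent, sans aucun calcul nouveau. La premi\`ere \'etape consiste \`a noter, comme au paragraphe~2, que l'on a $\chi[\mathfrak{q}^{A}(\underline{a}\mid\underline{b},s)]=\chi\bigl[\mathfrak{q}^{A}(\underline{a}\mid\underline{b},s)\cap\mathfrak{sl}(n)\bigr]+1$, de sorte que $\mathfrak{q}^{A}(\underline{a}\mid\underline{b},s)\cap\mathfrak{sl}(n)$ est une sous-alg\`ebre de Frobenius de $\mathfrak{sl}(n)$ si et seulement si $\chi[\mathfrak{q}^{A}(\underline{a}\mid\underline{b},s)]=1$ ; on v\'erifie au passage que $(\underline{a},(b_{1},\ldots,b_{t},s))$ est bien une paire de compositions de $n$, puisque $|\underline{b}|+s=|\underline{a}|=n$.

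Je traiterais ensuite les deux implications. Pour le sens direct, je supposerais $\chi[\mathfrak{q}^{A}(\underline{a}\mid\underline{b},s)]=1$ et j'invoquerais l'assertion 2)i) du th\'eor\`eme pr\'ec\'edent : elle donne $\chi[\mathfrak{q}^{C}_{n}(\underline{a}\mid\underline{b})]=[\frac{s}{2}]-1$ si $s$ est pair et $[\frac{s}{2}]$ si $s$ est impair ; pour $s=1$ comme pour $s=2$ cet entier vaut $0$, donc $\mathfrak{q}^{C}_{n}(\underline{a}\mid\underline{b})$ est de Frobenius. Pour la r\'eciproque, je supposerais $\chi[\mathfrak{q}^{C}_{n}(\underline{a}\mid\underline{b})]=0$ et j'invoquerais l'assertion 2)ii), qui donne $\chi[\mathfrak{q}^{A}(\underline{a}\mid\underline{b},s)]=[\frac{s+1}{2}]$ ; pour $s=1$ comme pour $s=2$ cet entier vaut $1$, d'o\`u la conclusion par la premi\`ere \'etape.

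Il n'y a pas ici de v\'eritable obstacle : tout le travail est contenu dans le th\'eor\`eme pr\'ec\'edent (et, en amont, dans le lemme~\ref{lemP} ainsi que les lemmes~\ref{lem3} et \ref{lem3'}). Le seul point \`a surveiller est la traduction, dans les deux sens, entre l'\'enonc\'e ``$\mathfrak{q}^{A}(\underline{a}\mid\underline{b},s)\cap\mathfrak{sl}(n)$ est de Frobenius'' et l'hypoth\`ese num\'erique $\chi[\mathfrak{q}^{A}(\underline{a}\mid\underline{b},s)]=1$ utilis\'ee dans ce th\'eor\`eme, plus la v\'erification imm\'ediate que $[\frac{s}{2}]$, $[\frac{s}{2}]-1$ et $[\frac{s+1}{2}]$ prennent bien les valeurs $0$ ou $1$ voulues lorsque $s\in\{1,2\}$.
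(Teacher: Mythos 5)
Votre argument est correct et correspond exactement \`a la d\'eduction que le papier sous-entend (le corollaire y est \'enonc\'e sans preuve comme cons\'equence imm\'ediate du th\'eor\`eme pr\'ec\'edent) : vous combinez les assertions 2)i) et 2)ii) avec la relation $\chi[\mathfrak{q}^{A}(\underline{a}\mid\underline{b},s)]=\chi[\mathfrak{q}^{A}(\underline{a}\mid\underline{b},s)\cap\mathfrak{sl}(n)]+1$ et la v\'erification num\'erique pour $s\in\{1,2\}$. Rien \`a redire.
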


\section{sous-alg\`ebres biparaboliques de $\mathfrak{so}(p)$}

Comme nous l'avons vu au num\'ero 2, toute sous-alg\`ebre biparabolique de $\mathfrak{so}(2n+1)$ (resp.  $\mathfrak{so}(2n)$) est conjugu\'ee sous l'action du groupe adjoint connexe \`a l'une des $\mathfrak{q}^{B}_{n}(\underline{a}\mid \underline{b})$ (resp. $\mathfrak{q}^{D}_{n}(\underline{a}\mid \underline{b})$) o\`u $\underline{a}$ et $\underline{b}$ sont deux compositions telles que $|\underline{a}|\leq n$ et $|\underline{b}|\leq n$. On associe \`a $\mathfrak{q}^{B}_{n}(\underline{a}\mid \underline{b})$ le m\^eme m\'eandre associ\'e \`a la sous-alg\`ebre biparabolique $\mathfrak{q}_{n}^{C}(\underline{a}\mid \underline{b})$ de $\mathfrak{sp}(2n)$. De plus, l'indice de $\mathfrak{q}^{B}(\underline{a}\mid \underline{b})$ est \'egal \`a celui de $\mathfrak{q}_{n}^{C}(\underline{a}\mid \underline{b})$ (voir \cite{meander.C}). Ainsi, tous les r\'esultats obtenus dans cet article pour les sous-alg\`ebres biparaboliques de $\mathfrak{sp}(2n)$ sont encore vrais pour les sous-alg\`ebres biparaboliques de $\mathfrak{so}(2n+1)$.\\

 Soit $\mathbb{C}^{2n}$ muni de la forme quadratique non d\'eg\'en\'er\'ee canonique. Comme expliqu\'e dans \cite[5.24]{DKT}, un sous-espace isotrope de dimension $n-1$, $E$ de $\mathbb{C}^{2n}$, est contenu exactement dans deux sous-espaces isotropes de dimension $n$, chacun desquels est laiss\'e invariant par le stabilisateur de $E$ dans $SO(2n)$. Il en r\'esulte que dans le cas de type $D$, on peut supposer que si $|\underline{a}|=n$ (resp. $|\underline{b}|=n$), alors $a_{k}>1$ (resp. $b_{t}>1$), ce que nous faisons d\'esormais.\\
 
Soit $\Xi_{n}$ l'ensemble des paires de compositions $(\underline{a}=(a_{1},\ldots,a_{k}),\underline{b}=(b_{1},\ldots,b_{t}))$ qui v\'erifient : $|\underline{a}|=n$, $|\underline{b}|=n-1$ et $a_{k}>1$ ou $|\underline{b}|=n$, $|\underline{a}|=n-1$ et $b_{t}>1$.

	Dans \cite{meander.D}, Panyushev et Yakimova associent \`a chaque sous-alg\`ebre biparabolique $\mathfrak{q}^{D}_{n}(\underline{a}\mid \underline{b})$ de $\mathfrak{so}(2n)$ un m\'eandre de la mani\`ere suivante :\\
	* Si $(\underline{a},\underline{b})\notin\Xi_{n}$, on associe \`a la sous-alg\`ebre biparabolique $\mathfrak{q}^{D}_{n}(\underline{a}\mid \underline{b})$ le m\'eandre $\Gamma^{D}_{n}(\underline{a}\mid \underline{b}):=\Gamma^{C}_{n}(\underline{a}\mid \underline{b})$, celui associ\'e \`a la sous-alg\`ebre biparabolique $\mathfrak{q}^{C}_{n}(\underline{a}\mid \underline{b})$ de $\mathfrak{sp}(2n)$.\\
	* Si $(\underline{a}=(a_{1},\ldots,a_{k}),\underline{b}=(b_{1},\ldots,b_{t}))\in\Xi_{n}$ et $|\underline{a}|= n$, on pose $\underline{b}^{'}:=(b_{1},\ldots,b_{t-1},b_{t}+1)$ et on associe \`a la sous-alg\`ebre biparabolique $\mathfrak{q}^{D}_{n}(\underline{a}\mid \underline{b})$ le m\'eandre $\Gamma^{D}_{n}(\underline{a}\mid \underline{b})$ obtenu du m\'eandre $\Gamma^{C}_{n}(\underline{a}\mid \underline{b}^{'})$ en rempla\c{c}ant l'arc joignant les sommets $a_{1}+\cdots+a_{k-1}+1$ et $n$ par un arc joignant les sommets $a_{1}+\cdots+a_{k-1}+1$ et $n+1$, et l'arc joignant les sommets $n+1$ et $n+a_{k}$ par un arc joignant les sommets $n$ et $n+a_{k}$. Ces deux nouveaux arcs sont les seuls arcs de $\Gamma^{D}_{n}(\underline{a}\mid \underline{b})$ qui se croisent, on les appelle arcs crois\'es. De plus, ils sont soit dans un m\^eme cycle, soit chacun d'eux est dans un segment distinct du segment qui contient l'autre arc. Si $|\underline{b}|= n$, on associe \`a la sous-alg\`ebre biparabolique $\mathfrak{q}^{D}_{n}(\underline{a}\mid \underline{b})$ le m\'eandre $\Gamma^{D}_{n}(\underline{a}\mid \underline{b})$ sym\'etrique de $\Gamma^{D}_{n}(\underline{b}\mid \underline{a})$ par rapport \`a la droite qui relie ses sommets.
\begin{Ex}$\\$

\medskip
$\\$
$\\$
$\Gamma_{10}^{C}(1,6,3\mid 3,2,5)=
{\setlength{\unitlength}{0.021in}
\raisebox{-11\unitlength}{%
\begin{picture}(50,-10)(-10,-10)
\multiput(0,3)(10,0){20}{\circle*{2}}

\put(70,4){\oval(40,25)[t]}
\put(70,4){\oval(20,15)[t]}
\put(120,4){\oval(40,25)[t]}
\put(120,4){\oval(20,15)[t]}

\put(10,4){\oval(20,15)[t]}
\put(35,4){\oval(10,10)[t]}

\put(180,4){\oval(20,15)[t]}
\put(155,4){\oval(10,10)[t]}

\put(35,3){\oval(50,30)[b]}
\put(35,3){\oval(30,20)[b]}
\put(35,3){\oval(10,10)[b]}

\put(80,3){\oval(20,15)[b]}

\put(155,3){\oval(50,30)[b]}
\put(155,3){\oval(30,20)[b]}
\put(155,3){\oval(10,10)[b]}

\put(110,2){\oval(20,15)[b]}
\put(95,-20){\line(0,1){50}} 
\end{picture}
} }
$ 
$\\$
\\
\\
\\
$\put(257,0){$\textcolor{blue}{\downarrow}$}$
\\
\\
\\
\medskip$\\$
$\Gamma_{10}^{D}(1,6,3\mid 3,2,4)=
{\setlength{\unitlength}{0.021in}
\raisebox{-11\unitlength}{%
\begin{picture}(50,-10)(-10,-10)
\multiput(0,3)(10,0){20}{\circle*{2}}

\put(70,4){\oval(40,25)[t]}
\put(70,4){\oval(20,15)[t]}
\put(120,4){\oval(40,25)[t]}
\put(120,4){\oval(20,15)[t]}

\put(10,4){\oval(20,15)[t]}
\put(35,4){\oval(10,10)[t]}

\put(180,4){\oval(20,15)[t]}
\put(155,4){\oval(10,10)[t]}

\put(35,3){\oval(50,30)[b]}
\put(35,3){\oval(30,20)[b]}
\put(35,3){\oval(10,10)[b]}

\put(85,3){\oval(30,20)[b]}

\put(155,3){\oval(50,30)[b]}
\put(155,3){\oval(30,20)[b]}
\put(155,3){\oval(10,10)[b]}

\put(105,3){\oval(30,20)[b]}
\put(95,-20){\line(0,1){50}}
\end{picture}
} }
$ 
\end{Ex}
$\\$
\begin{Ex}
$\\$
\medskip$\\$
$\Gamma_{5}^{C}(5\mid 5)=
{\setlength{\unitlength}{0.021in}
\raisebox{-11\unitlength}{%
\begin{picture}(110,-10)(-4,-10)
\multiput(0,3)(10,0){10}{\circle*{2}}

\put(20,4.5){\oval(40,25)[t]}
\put(20,4.5){\oval(20,15)[t]}
\put(70,4.5){\oval(40,25)[t]}
\put(70,4.5){\oval(20,15)[t]}

\put(20,4){\oval(40,25)[b]}
\put(20,4){\oval(20,15)[b]}
\put(70,4){\oval(40,25)[b]}
\put(70,4){\oval(20,15)[b]}

\end{picture}
} }
\put(-96.5,-30){\line(0,1){65}}$ 
$\put(-20,0){$\textcolor{blue}{\rightarrow}$}$
$\Gamma_{5}^{D}(4\mid 5)=
{\setlength{\unitlength}{0.021in}
\raisebox{-11\unitlength}{%
\begin{picture}(50,-10)(-4,-10)
\multiput(0,3)(10,0){10}{\circle*{2}}

\put(25,4.5){\oval(50,25)[t]}
\put(20,4.5){\oval(20,15)[t]}
\put(65,4.5){\oval(50,25)[t]}
\put(70,4.5){\oval(20,15)[t]}

\put(20,4){\oval(40,25)[b]}
\put(20,4){\oval(20,15)[b]}
\put(70,4){\oval(40,25)[b]}
\put(70,4){\oval(20,15)[b]}

\end{picture}
} }
\put(-5.5,-30){\line(0,1){65}}$

\end{Ex}

\begin{theo}\label{thm p}\cite{meander.D}
Soit $\mathfrak{q}^{D}_{n}(\underline{a}\mid \underline{b})$ une sous-alg\`ebre biparabolique de $\mathfrak{so}(2n)$, on a 
$$\chi[\mathfrak{q}^{D}_{n}(\underline{a}\mid \underline{b})]=nombre\;de\;cycles\;+\frac{1}{2}\;(nombre\;de\;segments\;non\;invariants)+\epsilon$$
o\`u $\epsilon$ est donn\'e de la mani\`ere suivante:$\\$
* Si $(\underline{a},\underline{b})\notin\Xi_{n}$, alors \\
$ \epsilon=\begin{cases} 0\;si\;|\underline{a}|-|\underline{b}|\;\text{est un entier pair} \\
1\;si\;|\underline{a}|-|\underline{b}|\;\text{ est un entier impair},\;\max(|\underline{a}|,|\underline{b}|)=n\;\text{et de plus l'arc joignant les sommets}\\
 \;\;\;\text{n et n+1 est un arc d'un\ segment de }\Gamma^{D}_{n}(\underline{a}\mid \underline{b})\\
-1\;\text{dans les cas restants}\\
\end{cases}$\\
* Si $(\underline{a},\underline{b})\in\Xi_{n}$, alors \\
$ \epsilon=\begin{cases} -1\;\text{si les arcs crois\'es sont dans un m\^eme cycle}\;de\;\Gamma_{n}^{D}(\underline{a}\mid\underline{b})\\
0\;sinon\\
\end{cases}$
\end{theo}

\begin{rem}
Avec les notations du th\'eor\`eme \ref{thm p}, si $(\underline{a},\underline{b})\notin\Xi_{n}$, on a
$$\chi[\mathfrak{q}^{D}_{n}(\underline{a}\mid \underline{b})]=\chi[\mathfrak{q}^{C}_{n}(\underline{a}\mid \underline{b})]+\epsilon$$
\end{rem}

De mani\`ere analogue au cas $\mathfrak{sp}(2n)$, on a le lemme suivant, 

\begin{lem}\label{lem r}

Soient $\underline{a}=(a_{1},\dots, a_{k})$ et $\underline{b}=(b_{1},\ldots,b_{t})$ deux compositions telles que $|\underline{b}|\leq|\underline{a}|\leq n$. On pose $\underline{a}^{-1}=(a_{k},\dots, a_{1})$. On a :
\begin{itemize}
\item[1)]$\chi[\mathfrak{q}_{n}^{D}(\underline{a}\mid\underline{b})]=\chi[\mathfrak{q}_{n}^{D}(\underline{b}\mid\underline{a})]$
\item[2)]$\chi[\mathfrak{q}_{n}^{D}(\underline{a}\mid\underline{b})]=\chi[\mathfrak{q}_{n+|\underline{a}|}^{D}(2|\underline{a}|\mid\underline{a}^{-1},\underline{b})]$ 
\end{itemize}
\end{lem}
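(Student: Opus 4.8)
On calque la d\'emonstration sur celles du corollaire~\ref{cor1}(1) et du lemme~\ref{lem0'}, le seul ingr\'edient nouveau \'etant le contr\^ole du terme correctif $\epsilon$ du th\'eor\`eme~\ref{thm p}. On commence par deux remarques de cadrage. Sous l'hypoth\`ese $|\underline{b}|\leq|\underline{a}|\leq n$, la seule fa\c{c}on d'avoir $(\underline{a},\underline{b})\in\Xi_{n}$ est $|\underline{a}|=n$ et $|\underline{b}|=n-1$ ; dans ce cas $a_{k}>1$ (convention du type $D$), de sorte que l'on a aussi $(\underline{b},\underline{a})\in\Xi_{n}$ et $(2|\underline{a}|\mid\underline{a}^{-1},\underline{b})=(2n\mid\underline{a}^{-1},\underline{b})\in\Xi_{2n}$ (premi\`ere condition de la d\'efinition de $\Xi$, le premier bloc $2n$ \'etant $>1$). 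R\'eciproquement, si $(\underline{a},\underline{b})\notin\Xi_{n}$, on v\'erifie de m\^eme que $(2|\underline{a}|\mid\underline{a}^{-1},\underline{b})\notin\Xi_{n+|\underline{a}|}$.

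Pour l'assertion 1), le point-cl\'e est que $\Gamma^{D}_{n}(\underline{b}\mid\underline{a})$ est l'image de $\Gamma^{D}_{n}(\underline{a}\mid\underline{b})$ par la sym\'etrie par rapport \`a la droite portant les sommets : c'est explicite dans la construction lorsque $(\underline{a},\underline{b})\in\Xi_{n}$, et c'est imm\'ediat sur l'\'ecriture $\Gamma^{C}_{n}(\cdot\mid\cdot)=\Gamma^{A}(\cdot\mid\cdot)$ (cette sym\'etrie \'echange les arcs trac\'es au-dessus et au-dessous de la droite) lorsque $(\underline{a},\underline{b})\notin\Xi_{n}$, auquel cas $\Gamma^{D}_{n}=\Gamma^{C}_{n}$. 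Cette sym\'etrie induit une bijection entre les composantes connexes des deux m\'eandres pr\'eservant le type de chaque composante, sa $\sigma$-invariance et, dans le cas $\Xi_{n}$, la paire d'arcs crois\'es. Comme la parit\'e de $|\underline{a}|-|\underline{b}|$, l'\'egalit\'e $\max(|\underline{a}|,|\underline{b}|)=n$, le fait que l'arc joignant $n$ et $n+1$ soit dans un segment et le fait que les deux arcs crois\'es soient dans un m\^eme cycle en sont tous invariants, la formule du th\'eor\`eme~\ref{thm p} donne la m\^eme valeur pour les deux indices.

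Pour l'assertion 2), le lemme~\ref{lem0'} fournit d\'ej\`a $\chi[\mathfrak{q}^{C}_{n}(\underline{a}\mid\underline{b})]=\chi[\mathfrak{q}^{C}_{n+|\underline{a}|}(2|\underline{a}|\mid\underline{a}^{-1},\underline{b})]$, et on utilisera surtout la bijection sous-jacente \`a sa d\'emonstration, entre les composantes des deux m\'eandres de type $C$, qui conserve le nombre de cycles, de segments invariants et de segments non invariants ; il faudra v\'erifier qu'elle envoie l'arc joignant $n$ et $n+1$ sur l'arc joignant les deux sommets centraux du m\'eandre de droite (cf.~l'exemple~\ref{Ex1}). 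Si $(\underline{a},\underline{b})\notin\Xi_{n}$, alors d'apr\`es la premi\`ere remarque les deux m\'eandres de type $D$ co\"incident avec les m\'eandres de type $C$ correspondants, donc, d'apr\`es la remarque suivant le th\'eor\`eme~\ref{thm p}, $\chi[\mathfrak{q}^{D}_{n}(\underline{a}\mid\underline{b})]=\chi[\mathfrak{q}^{C}_{n}(\underline{a}\mid\underline{b})]+\epsilon$ et de m\^eme pour le terme de droite. Les deux indices de type $C$ \'etant \'egaux, il reste \`a voir que les deux valeurs de $\epsilon$ co\"incident : cela tient \`a ce que $|\underline{a}|-|\underline{b}|$ et $2|\underline{a}|-(|\underline{a}|+|\underline{b}|)$ ont m\^eme parit\'e, \`a ce que $\max(|\underline{a}|,|\underline{b}|)=n$ et $\max(2|\underline{a}|,|\underline{a}|+|\underline{b}|)=n+|\underline{a}|$ \'equivalent toutes deux \`a $|\underline{a}|=n$, et \`a ce que la bijection ci-dessus respecte l'arc central (ce dernier point ne servant que lorsque $|\underline{a}|=n$ et $|\underline{a}|-|\underline{b}|$ est impair).

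Reste le cas $(\underline{a},\underline{b})\in\Xi_{n}$, donc $|\underline{a}|=n$, $|\underline{b}|=n-1$, $a_{k}>1$ et $(2n\mid\underline{a}^{-1},\underline{b})\in\Xi_{2n}$. Posant $\underline{b}^{'}=(b_{1},\ldots,b_{t-1},b_{t}+1)$, les m\'eandres $\Gamma^{D}_{n}(\underline{a}\mid\underline{b})$ et $\Gamma^{D}_{2n}(2n\mid\underline{a}^{-1},\underline{b})$ se d\'eduisent respectivement de $\Gamma^{C}_{n}(\underline{a}\mid\underline{b}^{'})$ et de $\Gamma^{C}_{2n}(2n\mid\underline{a}^{-1},\underline{b}^{'})$ par le remplacement de deux arcs d\'ecrit dans la construction. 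On appliquera la bijection du lemme~\ref{lem0'} \`a ce couple de m\'eandres de type $C$ (associ\'es aux compositions $\underline{a}$ et $\underline{b}^{'}$, toutes deux de taille $n$) ; en contr\^olant qu'elle met en correspondance les quatre extr\'emit\'es des deux arcs remplac\'es \`a gauche avec celles des deux arcs remplac\'es \`a droite, on en d\'eduira une bijection entre les composantes de $\Gamma^{D}_{n}(\underline{a}\mid\underline{b})$ et de $\Gamma^{D}_{2n}(2n\mid\underline{a}^{-1},\underline{b})$ conservant le nombre de cycles et de segments non invariants et pr\'eservant la paire d'arcs crois\'es. Les deux valeurs de $\epsilon$ co\"incideront alors ($-1$ si les arcs crois\'es sont dans un m\^eme cycle, $0$ sinon), et le th\'eor\`eme~\ref{thm p} donnera la conclusion. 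Je m'attends \`a ce que l'obstacle principal soit exactement cette comptabilit\'e : rendre la bijection du lemme~\ref{lem0'} assez explicite pour garantir qu'elle respecte l'arc distingu\'e joignant $n$ \`a $n+1$ (cas hors $\Xi_{n}$) et la paire d'arcs crois\'es (cas $\Xi_{n}$), afin que $\epsilon$ se transporte correctement ; tout le reste n'est qu'une transcription des arguments du cas $\mathfrak{sp}(2n)$.
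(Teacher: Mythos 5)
Votre démonstration est correcte et suit exactement la voie que l'article sous-entend : celui-ci ne donne d'ailleurs aucune preuve de ce lemme et se contente de la phrase « de manière analogue au cas $\mathfrak{sp}(2n)$ », c'est-à-dire précisément le report des arguments du corollaire~\ref{cor1}(1) et du lemme~\ref{lem0'} que vous explicitez. Votre découpage (vérification que l'appartenance à $\Xi$ se transporte, invariance du terme $\epsilon$ du théorème~\ref{thm p} sous la symétrie horizontale et sous la bijection du lemme~\ref{lem0'}) est exactement la comptabilité supplémentaire que l'« analogie » du texte passe sous silence, et les points que vous signalez comme restant à contrôler sont bien ceux qui se vérifient sur la construction explicite des méandres.
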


\begin{theo}\label{thm r1}
Soient $t\in\mathbb{N}^{\times}$ et $\underline{a}=(a_{1},\ldots,a_{k})$ une composition v\'erifiant $|\underline{a}|\leq t\leq n$ et $(t\mid\underline{a})\notin\Xi_{n}$. On pose $a_{k+1}=t-|\underline{a}|$ et $d_{i}=(a_{1}+\dots +a_{i-1})-(a_{i+1}+\dots+a_{k+1}),\;1\leq i\leq k$.
 \begin{itemize}
\item[1)] Pour tout $1\leq i\leq k$ tel que $d_{i}\neq 0$ et tout $\alpha\in\mathbb{Z}$ tel que $a_{i}+\alpha|d_{i}|\geq 0$, on a 
$$\chi[\mathfrak{q}^{D}_{n}(t\mid\underline{a})]=\chi[\mathfrak{q}^{D}_{n+\alpha |d_{i}|}(t+\alpha |d_{i}|\mid a_{1},\ldots,a_{i-1},a_{i}+ \alpha |d_{i}|,a_{i+1},\ldots,a_{k})]$$
En particulier, on a 
$$\chi[\mathfrak{q}^{D}_{n}(t\mid\underline{a})]=\chi[\mathfrak{q}^{D}_{n-a_{i}+a_{i}[|d_{i}|]}(t-a_{i}+a_{i}[|d_{i}|]\mid a_{1},\ldots,a_{i-1},a_{i}[|d_{i}|],a_{i+1},\ldots,a_{k})]$$ 
\item[2)] Pour tout $1\leq i\leq k$ tel que $d_{i}=0$, on a 
$$\chi[\mathfrak{q}^{D}_{n}(t\mid\underline{a})]=a_{i}+\chi[\mathfrak{q}^{D}_{n-a_{i}}(t-a_{i}\mid a_{1},\ldots,a_{i-1},a_{i+1}\ldots,a_{k})]$$
\end{itemize}
\end{theo}

\begin{proof}
Rappelons que dans ce cas $\Gamma^{D}_{n}(t\mid\underline{a})=\Gamma^{C}_{n}(t\mid\underline{a})$, il suit du th\'eor\`eme \ref {thm p} que l'on a 
$$\chi[\mathfrak{q}^{D}_{n}(t\mid\underline{a})]=\chi[\mathfrak{q}^{C}_{n}(t\mid\underline{a})]+\epsilon$$
o\`u $\epsilon$ est donn\'e par:$\\$
$ \epsilon=\begin{cases} 0\;\;\;\text{si $t-|\underline{a}|$ est un entier pair} \\
1\;\;\;\text{si $t-|\underline{a}|$ est un entier impair, $t=n$ et de plus l'arc de $\Gamma^{D}_{n}(t\mid \underline{a})$ joignant les sommets}\\
 \text{$\;\;\;\;\;n$ et $n+1$ est un arc d'un segment de $\Gamma^{D}_{n}(t\mid \underline{a})$}\\
-1\;\text{dans les cas restants}\\
\end{cases}$\\
Compte tenu du th\'eor\`eme \ref{thm0'}, il reste \'egalement \`a v\'erifier la condition sur l'arc joignant les sommets $n$ et $n+1$. Pour cela, on pose \\
$(n^{'}\mid t^{'}\mid \underline{a}^{'}):=\begin{cases}(n+\alpha |d_{i}|\mid t+\alpha |d_{i}|\mid a_{1},\ldots,a_{i-1},a_{i}+ \alpha |d_{i}|,a_{i+1},\ldots,a_{k})\;si\;a_{i}+\alpha|d_{i}|\geq 0\\
(n-a_{i}\mid t-a_{i}\mid a_{1},\ldots,a_{i-1},a_{i+1}\ldots,a_{k})\;\text{si}\;d_{i}= 0
\end{cases}$,\\
en particulier, $t^{'}-|\underline{a}^{'}|=t-|\underline{a}|$ et $(t^{'}\mid\underline{a}^{'})\notin\Xi_{n^{'}}$. Il s'ensuit que $\Gamma^{D}_{n^{'}}(t^{'}\mid \underline{a}^{'})=\Gamma^{C}_{n^{'}}(t^{'}\mid \underline{a}^{'})$. Remarquons que dans le cas $d_{i}= 0$, le m\'eandre $\Gamma^{C}_{n}(t\mid \underline{a})$ est r\'eunion disjointe des m\'eandres $\Gamma^{C}_{n^{'}}(t^{'}\mid \underline{a}^{'})$ et $\Gamma^{C}_{a_{i}}(a_{i}\mid a_{i})$, et dans le cas $a_{i}+\alpha|d_{i}|\geq 0$, le m\'eandre $\Gamma^{C}_{n^{'}}(t^{'}\mid \underline{a}^{'})$ est obtenu du m\'eandre $\Gamma^{C}_{n}(t\mid \underline{a})$ comme expliqu\'e dans la preuve du lemme \ref{lem1}. Il en r\'esulte que l'arc joignant les sommets $n$ et $n+1$ est un arc d'un segment de $\Gamma^{C}_{n}(t\mid \underline{a})$ si et seulement si l'arc joignant les sommets $n^{'}$ et $n^{'}+1$ est un arc d'un segment de $\Gamma^{C}_{n^{'}}(t^{'}\mid \underline{a}^{'})$.

\end{proof}

\begin{rem}
Soient $(\underline{a}=(a_{1},\ldots,a_{k}),\underline{b}=(b_{1},\ldots,b_{t}))\in\Xi_{n}$ tel que $|\underline{b}|=n-1$. On pose $\underline{b}^{'}:=(b_{1},\ldots,b_{t-1},b_{t}+1)$ et on consid\`ere $\Gamma^{A}(\underline{a}\mid\underline{b}^{'})$ le m\'eandre de la sous-alg\`ebre biparabolique $\mathfrak{q}^{A}(\underline{a}\mid\underline{b}^{'})$ de $\mathfrak{gl}(n)$ dont les sommets sont les $n$ premiers sommets du m\'eandre $\Gamma_{n}^{D}(\underline{a}\mid\underline{b})$. Alors, les arcs crois\'es du m\'eandre $\Gamma_{n}^{D}(\underline{a}\mid\underline{b})$ sont dans un m\^eme cycle si et seulement si le dernier sommet $($le somment n$)$ du m\'eandre $\Gamma^{A}(\underline{a}\mid\underline{b}^{'})$ appartient \`a un cycle de $\Gamma^{A}(\underline{a}\mid\underline{b}^{'})$.
\end{rem}

\begin{cor}\label{corD} Avec les notations pr\'ec\'edentes, on a \\
$\chi[\mathfrak{q}_{n}^{D}(\underline{a}\mid\underline{b})]=\begin{cases}\chi[\mathfrak{q}^{A}(\underline{a}\mid\underline{b}^{'})]\;si\;le\;somment\;n\;appartient\;\text{\`a un segment de }\Gamma^{A}(\underline{a}\mid\underline{b}^{'})\\
\chi[\mathfrak{q}^{A}(\underline{a}\mid\underline{b}^{'})]-2\;sinon\\
\end{cases}$\\
En particulier, pour $n\geq 1$, on a 
$\chi[\mathfrak{q}_{n}^{D}(n\mid n-1)]=|n-2|$\\
\end{cor}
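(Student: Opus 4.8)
The plan is to realise $\Gamma^{D}_{n}(\underline{a}\mid\underline{b})$ by a local surgery on $\Gamma^{C}_{n}(\underline{a}\mid\underline{b}^{'})$ and to follow the effect of this surgery on the statistics occurring in Theorems \ref{thm2} and \ref{thm p}. We may assume $|\underline{a}|=n$ and $|\underline{b}|=n-1$, the case $|\underline{b}|=n$ reducing to it by Lemma \ref{lem r}(1). Write $c$ and $s$ for the number of cycles and of segments of the meander $\Gamma^{A}(\underline{a}\mid\underline{b}^{'})$, so that $\chi[\mathfrak{q}^{A}(\underline{a}\mid\underline{b}^{'})]=2c+s$ by Theorem \ref{thm1}. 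The first point I would establish is that, because $|\underline{a}|=|\underline{b}^{'}|=n$, no arc of $\Gamma^{C}_{n}(\underline{a}\mid\underline{b}^{'})$ crosses the axis of the symmetry $\sigma$: for $x\in\{1,\dots,n\}$ the two values $\theta_{\underline{a}}(x)$ and $\theta_{\underline{b}^{'}}(x)$ defining the arcs at $x$ both remain in $\{1,\dots,n\}$. Hence $\Gamma^{C}_{n}(\underline{a}\mid\underline{b}^{'})$ is the disjoint union of the meander $\Gamma^{A}(\underline{a}\mid\underline{b}^{'})$ carried by its first $n$ vertices and of its $\sigma$-mirror carried by the last $n$; it therefore has $2c$ cycles and $2s$ segments, all non-invariant (a component lying inside one half is sent by $\sigma$ to a component of the other half), and $\chi[\mathfrak{q}^{C}_{n}(\underline{a}\mid\underline{b}^{'})]=2c+s=\chi[\mathfrak{q}^{A}(\underline{a}\mid\underline{b}^{'})]$ (this is also Corollary \ref{cor1}(4)).

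Next, by construction of $\Gamma^{D}_{n}(\underline{a}\mid\underline{b})$ recalled above, this meander is obtained from $\Gamma^{C}_{n}(\underline{a}\mid\underline{b}^{'})$ by deleting the below-arc $e_{L}=\{a_{1}+\cdots+a_{k-1}+1,\,n\}$ of the left copy together with its mirror $e_{R}=\sigma(e_{L})=\{n+1,\,n+a_{k}\}$, and inserting the two crossing arcs $f_{1}=\{a_{1}+\cdots+a_{k-1}+1,\,n+1\}$ and $f_{2}=\sigma(f_{1})=\{n,\,n+a_{k}\}$. Since $a_{k}>1$, the arc $e_{L}$ is a genuine arc of $\Gamma^{A}(\underline{a}\mid\underline{b}^{'})$ incident to the vertex $n$; moreover $\theta_{\underline{b}^{'}}$ does not fix $n$ (the $\underline{b}^{'}$-block of $n$ has size $b_{t}+1\ge 2$, whence $\theta_{\underline{b}^{'}}(n)=n-b_{t}$), so $n$ has degree two in $\Gamma^{A}(\underline{a}\mid\underline{b}^{'})$ and thus lies either on a cycle or in the interior of a segment. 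I would treat these two cases separately.

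If the component of $n$, equivalently of $e_{L}$, in $\Gamma^{A}(\underline{a}\mid\underline{b}^{'})$ is a cycle $C$, then $C-e_{L}$ is a path with ends $a_{1}+\cdots+a_{k-1}+1$ and $n$, while $\sigma(C)-e_{R}$ is a path with ends $n+1$ and $n+a_{k}$, and inserting $f_{1},f_{2}$ joins them into a single cycle which carries both crossed arcs, every other component being untouched; so $\Gamma^{D}_{n}(\underline{a}\mid\underline{b})$ has $2c-1$ cycles and $2s$ non-invariant segments, and by the remark preceding the corollary the correction term of Theorem \ref{thm p} is $\epsilon=-1$. If instead the component of $n$ is a segment $S$, then deleting $e_{L}$ cuts $S$ into two paths, one possibly reduced to the single vertex $a_{1}+\cdots+a_{k-1}+1$, and deleting $e_{R}$ cuts $\sigma(S)$ symmetrically; inserting $f_{1},f_{2}$ re-glues the four pieces into exactly two segments, each made of a piece from each half, exchanged by $\sigma$ hence both non-invariant, and with the two crossed arcs lying in distinct segments; so $\Gamma^{D}_{n}(\underline{a}\mid\underline{b})$ still has $2c$ cycles and $2s$ non-invariant segments, and $\epsilon=0$. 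In either case Theorem \ref{thm p} gives $\chi[\mathfrak{q}^{D}_{n}(\underline{a}\mid\underline{b})]=\bigl(\text{number of cycles}+\tfrac{1}{2}\,\text{number of non-invariant segments}\bigr)+\epsilon=\chi[\mathfrak{q}^{A}(\underline{a}\mid\underline{b}^{'})]+2\epsilon$, which is the asserted formula since $\epsilon=-1$ exactly when $n$ lies on a cycle, i.e. not on a segment. For the final assertion, apply the formula with $\underline{a}=(n)$, $\underline{b}=(n-1)$, $\underline{b}^{'}=(n)$: then $\mathfrak{q}^{A}(\underline{a}\mid\underline{b}^{'})=\mathfrak{gl}(n)$ has index $n$, and $\Gamma^{A}(n\mid n)$ is the union of the two-cycles $\{x,n+1-x\}$ (together with the fixed vertex $(n+1)/2$ when $n$ is odd), so $n$ lies on a cycle for $n\ge 2$ and on a segment for $n=1$; hence $\chi[\mathfrak{q}^{D}_{n}(n\mid n-1)]=n-2$ for $n\ge 2$ and $=1$ for $n=1$, that is $|n-2|$ in all cases.

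The delicate point is the surgery bookkeeping in the segment case: one must verify that re-gluing the four pieces along $f_{1},f_{2}$ yields two segments rather than one segment or a cycle, that neither is $\sigma$-invariant, and that the crossed arcs fall into different ones, paying attention to the degenerate subcase where $a_{1}+\cdots+a_{k-1}+1$ has degree one in $\Gamma^{A}(\underline{a}\mid\underline{b}^{'})$ and one of the pieces is a single vertex. Everything else is a routine comparison of the two meanders together with the counting formulas of Theorems \ref{thm1}, \ref{thm2} and \ref{thm p}.
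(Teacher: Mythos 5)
Your proof is correct and follows essentially the same route as the paper, which states this corollary without proof as an immediate consequence of Theorem \ref{thm p} and the preceding remark on the crossed arcs. Your surgery bookkeeping on $\Gamma^{C}_{n}(\underline{a}\mid\underline{b}^{'})$ simply makes explicit the comparison of cycle and non-invariant-segment counts (and the proof of that remark) which the paper leaves implicit.
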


Pour une sous-alg\`ebre biparabolique $\mathfrak{q}^{A}(\underline{a}\mid\underline{b})$ de $\mathfrak{gl}(n)$, on pose \\\
$\Psi[\mathfrak{q}^{A}(\underline{a}\mid\underline{b})]=\begin{cases}\chi[\mathfrak{q}^{A}(\underline{a}\mid\underline{b})]\;\text{si le somment $n$ appartient \`a un segment de }\Gamma^{A}(\underline{a}\mid\underline{b})\\
\chi[\mathfrak{q}^{A}(\underline{a}\mid\underline{b})]-2\;\text{sinon}\\
\end{cases}$\\

\begin{theo}\label{thm r2}
Soient $\underline{a}=(a_{1},\ldots,a_{k})$ une composition v\'erifiant $1\leq |\underline{a}|=n-1$ $(i.e \;(n\mid\underline{a})\in\Xi_{n})$. On pose $\underline{a}^{'}=(a^{'}_{1},\ldots,a^{'}_{k})=(a_{1},\ldots,a_{k-1},a_{k}+1)$, $d_{k}=-(a^{'}_{1}+\cdots+a^{'}_{k-1})$ et
$d_{i}=(a^{'}_{1}+\cdots+a^{'}_{i-1})-(a^{'}_{i+1}+\cdots+a^{'}_{k})$, $ 1\leq i\leq k-1$.
\begin{itemize}
\item[1)] Pour tout $1\leq i\leq k$ tel que $d_{i}\neq 0$ et tout $\alpha\in\mathbb{Z}$ tel que $a^{'}_{i}+\alpha|d_{i}|\geq 0$, on a 
$$\chi[\mathfrak{q}^{D}_{n}(n\mid a_{1},\ldots,a_{k})]=\Psi[\mathfrak{q}^{A}(n+\alpha|d_{i}|\mid a^{'}_{1},\ldots,a^{'}_{i-1},a^{'}_{i}+\alpha|d_{i}|,a^{'}_{i+1},\ldots,a^{'}_{k})]$$
En particulier, si on pose $t_{i}=a^{'}_{i}-a^{'}_{i}[|d_{i}|]$, on a 
$$\chi[\mathfrak{q}^{D}_{n}(n\mid a_{1},\ldots,a_{k})]=\Psi[\mathfrak{q}^{A}(n-t_{i}\mid a^{'}_{1},\ldots,a^{'}_{i-1},a^{'}_{i}[|d_{i}|],a^{'}_{i+1}\ldots,a^{'}_{k})]$$
\item[2)] Pour tout $1\leq i\leq k$ tel que $d_{i}=0$, on a 
$$\chi[\mathfrak{q}^{D}_{n}(n\mid a_{1},\ldots,a_{k})]=a_{i}+\Psi[\mathfrak{q}^{A}(n-a_{i}\mid a_{1},\ldots,a_{i-1},a_{i+1}\ldots,a_{k})]$$
\end{itemize}
\end{theo}

\begin{proof}

Soit $ 1\leq i\leq k$. On pose $(n^{'}\mid\underline{a}^{''})$ la paire de compositions donn\'ee par\\
 $(n^{'}\mid\underline{a}^{''})=\begin{cases}(n+\alpha|d_{i}|\mid a^{'}_{1},\ldots,a^{'}_{i-1},a^{'}_{i}+\alpha|d_{i}|,a^{'}_{i+1},\ldots,a^{'}_{k})\;\text{si}\;a^{'}_{i}+\alpha|d_{i}|\geq 0\\
 (n-a_{i}\mid a_{1},\ldots,a_{i-1},a_{i+1}\ldots,a_{k})\;\text{si}\;d_{i}=0\\
 \end{cases}$\\
 Consid\'erons $\Gamma^{A}(n\mid\underline{a}^{'})$ le m\'eandre de $\mathfrak{q}^{A}(n\mid\underline{a}^{'})$ et $\Gamma^{A}(n^{'}\mid\underline{a}^{''})$ le m\'eandre de $\mathfrak{q}^{A}(n^{'}\mid\underline{a}^{''})$ obtenu de $\Gamma^{A}(n\mid\underline{a}^{'})$ de la mani\`ere introduite dans \cite{MB1}. On v\'erifie que le dernier sommet de $\Gamma^{A}(n\mid\underline{a}^{'})$ appartient \`a un segment de $\Gamma^{A}(n\mid\underline{a}^{'})$ si et seulement si le dernier sommet de $\Gamma^{A}(n^{'}\mid\underline{a}^{''})$ appartient \`a un segment de $\Gamma^{A}(n^{'}\mid\underline{a}^{''})$. Il suit du th\'eor\`eme \ref{thm0} que $\Psi[\mathfrak{q}^{A}(n\mid\underline{a}^{'})]=\Psi[\mathfrak{q}^{A}(n^{'}\mid\underline{a}^{''})]$. Le r\'esultat se d\'eduit im\'ediatement du corollaire pr\'ec\'edent.
\end{proof}

\begin{rem}
Compte tenu du lemme \ref{lem r}, les th\'eor\`emes \ref{thm r1} et \ref{thm r2} donnent un algorithme de r\'eduction pour calculer l'indice des sous-alg\`ebres biparaboliques de $\mathfrak{so}(2n)$.
\end{rem}

\begin{Ex} Consid\'erons la sous-alg\`ebre biparabolique $\mathfrak{q}^{D}_{335}(218,15,102\mid 33,301)$ de $\mathfrak{so}(670)$. On v\'erifie que $(218,15,102\mid 33,301)\in\Xi_{335}$. Compte tenu du lemme \ref{lem r}, on a 
$$\mathfrak{q}^{D}_{335}(218,15,102\mid 33,301)=\chi[\mathfrak{q}^{D}_{670}(670\mid 102,15,218,33,301)]$$
Puis, on applique le th\'eor\`eme \ref{thm r1}, on a 
\begin{align*}
\chi[\mathfrak{q}^{D}_{670}(670\mid 102,15,218,33,301)]&=\Psi[\mathfrak{q}^{A}(670\mid 102,15,218,33,302)]\\
 &=\Psi[\mathfrak{q}^{A}(452\mid 102,15,33,302)]\\
 &=\Psi[\mathfrak{q}^{A}(152\mid 102,15,33,2)]\\
 &=\Psi[\mathfrak{q}^{A}(52\mid 2,15,33,2)]\\
 &=\Psi[\mathfrak{q}^{A}(22\mid 2,15,3,2)]\\
 &=\Psi[\mathfrak{q}^{A}(7\mid 2,3,2)]\\
 &=3+\Psi[\mathfrak{q}^{A}(4\mid 2,2)]\\
 &=3+\Psi[\mathfrak{q}^{A}(2\mid 2)]\\
 &=3+2-2\\
 &=3\\
\end{align*}
\end{Ex}

Consid\'erons la famille des sous-alg\`ebre biparaboliques de $\mathfrak{so}(2n)$ de la forme $\mathfrak{q}^{D}_{n}(a,b\mid c)$ o\`u $(a,b,c)\in(\mathbb{N}^{\times})^{3}$. Dans le cas o\`u $(a,b\mid c)\notin\Xi_{n}$, il est simple d'obtenir une formule pour l'indice de $\mathfrak{q}^{D}_{n}(a,b\mid c)$ \`a partir du th\'eor\`eme \ref{I3} et du th\'eor\`eme \ref{thm p}. Pour le cas $(a,b\mid c)\in\Xi_{n}$, il suit du lemme \ref{lem r} et du corollaire \ref{corD} qu'on peut supposer $(a,b\mid c)=(a,n-a-1\mid n)$. D'apr\`es le th\'eor\`eme \ref{ind}, on a $\chi[\mathfrak{q}^{A}(a,n-a\mid n)]=a\wedge n$. Il suit du lemme 3.3 de \cite{MB1} que le dernier sommet du m\'eandre $\Gamma^{A}(a,n-a\mid n)$ appartient \`a un cycle de $\Gamma^{A}(a,n-a\mid n)$ si et seulement si $a\wedge n\geq 2$. Ainsi, on a le th\'eor\`eme suivant

\begin{theo}\label{I'3} Soit $(a,n)\in(\mathbb{N}^{\times})^{2}$ tel que $a\leq n-2$, on a\\ 
$\chi[\mathfrak{q}_{n}^{D}(a,n-a-1\mid n)]=\chi[\mathfrak{q}_{n}^{D}(a,n-a\mid n-1)]=|(a\wedge n)-2|$
\end{theo}

\begin{cor}\label{J'3} Soit $(a,b,c)\in(\mathbb{N}^{\times})^{3}$, $p=(a+b)\wedge(b+c)$, $r=|a+b-c|$ et $q=a\wedge n$. Alors $\mathfrak{q}^{D}_{n}(a,b\mid c)$ est une sous-alg\`ebre de Frobenius de $\mathfrak{so}(2n)$ si et seulement si l'une des conditions suivantes est v\'erifi\'ee :
\begin{itemize}
\item[1)]$r=1$, $q=2$ et $\max(a+b,c)=n$
\item[1)]$r=1$, $p=1$ et $\max(a+b,c)=n-1$
\item[2)]$r=2$, $p=1$ et $\max(a+b,c)=n$
\item[3)]$r=3$, $p=2$ et $\max(a+b,c)=n-1$
\end{itemize} 

\end{cor}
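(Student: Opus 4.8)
L'approche consiste \`a s\'eparer les cas $(a,b\mid c)\in\Xi_n$ et $(a,b\mid c)\notin\Xi_n$ et, dans chacun, \`a traduire la condition $\chi[\mathfrak{q}^{D}_{n}(a,b\mid c)]=0$ \`a l'aide des r\'esultats d\'ej\`a \'etablis : le th\'eor\`eme \ref{I'3} pour le premier cas, et pour le second la remarque suivant le th\'eor\`eme \ref{thm p} (qui relie l'indice de type $D$ \`a celui de type $C$), jointe au th\'eor\`eme \ref{I3} et au corollaire \ref{J3}. Supposons d'abord $(a,b\mid c)\in\Xi_n$. Par d\'efinition de $\Xi_n$ on a $\{a+b,c\}=\{n,n-1\}$, donc $r=1$ et $\max(a+b,c)=n$ ; le th\'eor\`eme \ref{I'3} donne $\chi[\mathfrak{q}^{D}_{n}(a,b\mid c)]=|(a\wedge n)-2|=|q-2|$, qui s'annule si et seulement si $q=2$. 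R\'eciproquement, si $r=1$, $\max(a+b,c)=n$ et $q=2$, alors $\{a+b,c\}=\{n,n-1\}$ ; dans le sous-cas $a+b=n$, $c=n-1$, l'hypoth\`ese $q=2$ impose $b>1$ (sinon $a=n-1$ et $q=(n-1)\wedge n=1$), de sorte que la normalisation du type $D$ est satisfaite et $(a,b\mid c)\in\Xi_n$ ; le sous-cas $c=n$, $a+b=n-1$ v\'erifie toujours $(a,b\mid c)\in\Xi_n$. Cela donne exactement la premi\`ere condition.

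Supposons maintenant $(a,b\mid c)\notin\Xi_n$. D'apr\`es la remarque suivant le th\'eor\`eme \ref{thm p}, $\chi[\mathfrak{q}^{D}_{n}(a,b\mid c)]=\chi[\mathfrak{q}^{C}_{n}(a,b\mid c)]+\epsilon$ avec $\epsilon\in\{-1,0,1\}$ ; comme $\chi[\mathfrak{q}^{C}_{n}(a,b\mid c)]\geq 0$, l'alg\`ebre est de Frobenius si et seulement si $(\chi[\mathfrak{q}^{C}_{n}(a,b\mid c)],\epsilon)\in\{(0,0),(1,-1)\}$. Posons $s=\max(a+b,c)$ ; le corollaire \ref{cor1} donne $\chi[\mathfrak{q}^{C}_{n}(a,b\mid c)]=\chi[\mathfrak{q}^{C}_{s}(a,b\mid c)]+(n-s)$, ce qui impose $n-s\leq 1$ d\`es que $\chi[\mathfrak{q}^{C}_{n}(a,b\mid c)]\leq 1$. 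Si $\chi[\mathfrak{q}^{C}_{n}(a,b\mid c)]=0$, alors $n=s$ et le corollaire \ref{J3} donne $(r,p)\in\{(1,1),(2,1),(3,2)\}$ ; comme $\epsilon=0$ exige que $r$ soit pair (th\'eor\`eme \ref{thm p}, puisque $|\underline a|-|\underline b|=\pm r$), seul $(r,p)=(2,1)$ convient, d'o\`u la condition $r=2$, $p=1$, $\max(a+b,c)=n$. Si $\chi[\mathfrak{q}^{C}_{n}(a,b\mid c)]=1$ avec $s=n-1$, alors $\chi[\mathfrak{q}^{C}_{s}(a,b\mid c)]=0$, donc $(r,p)\in\{(1,1),(2,1),(3,2)\}$ ; puisque $\max(|\underline a|,|\underline b|)=s<n$, la clause rendant $\epsilon=1$ est en d\'efaut, de sorte que $\epsilon=-1$ exactement lorsque $r$ est impair, et l'on retient $(r,p)\in\{(1,1),(3,2)\}$ : ce sont les conditions $r=1$, $p=1$, $\max(a+b,c)=n-1$ et $r=3$, $p=2$, $\max(a+b,c)=n-1$.

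Il reste le cas $\chi[\mathfrak{q}^{C}_{n}(a,b\mid c)]=1$ avec $s=n$, qui est le point d\'elicat de la preuve. Le th\'eor\`eme \ref{I3} ram\`ene ce cas \`a un petit nombre de valeurs de $(r,p)$ ; lorsque $r$ y est pair on a $\epsilon=0$, donc $\chi[\mathfrak{q}^{D}_{n}(a,b\mid c)]=1$, mais lorsque $r$ est impair $\epsilon$ vaut $+1$ ou $-1$ selon que l'arc de $\Gamma^{C}_{n}(a,b\mid c)$ joignant les sommets $n$ et $n+1$ appartient \`a un segment ou \`a un cycle. L'obstacle principal est donc de d\'ecrire les composantes connexes de $\Gamma^{C}_{n}(a,b\mid c)$ dans les configurations o\`u son indice vaut $1$ — en se ramenant par le lemme \ref{lem0'} \`a une sous-alg\`ebre de la forme $\mathfrak{q}^{C}_{m}(m\mid\underline c)$, puis en appliquant les r\'eductions du th\'eor\`eme \ref{thm0'}, qui pr\'eservent \`a la fois l'indice et la position de l'arc central (comme dans la preuve du th\'eor\`eme \ref{thm r1}) — de fa\c{c}on \`a v\'erifier que cet arc appartient alors \`a un segment, si bien que $\epsilon=+1$ et $\chi[\mathfrak{q}^{D}_{n}(a,b\mid c)]=2$ et qu'aucune condition nouvelle n'appara\^it. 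La r\'eunion des quatre conditions trouv\'ees fournit alors la caract\'erisation annonc\'ee, les alg\`ebres de Frobenius de cette famille s'en d\'eduisant en lisant les valeurs de $r$, $p$, $q$ et $\max(a+b,c)$.
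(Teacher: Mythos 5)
Your overall decomposition---the case $(a,b\mid c)\in\Xi_{n}$ via Theorem \ref{I'3}, then, for $(a,b\mid c)\notin\Xi_{n}$, the dichotomy $(\chi[\mathfrak{q}^{C}_{n}(a,b\mid c)],\epsilon)\in\{(0,0),(1,-1)\}$---is the natural one, and your treatment of the $\Xi_{n}$ case (including the nice observation that $q=2$ forces $b>1$, hence membership in $\Xi_{n}$), of the case $\chi[\mathfrak{q}^{C}_{n}(a,b\mid c)]=0$, and of the case $\chi[\mathfrak{q}^{C}_{n}(a,b\mid c)]=1$ with $\max(a+b,c)=n-1$ is correct. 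The problem is the remaining case $\chi[\mathfrak{q}^{C}_{n}(a,b\mid c)]=1$ with $\max(a+b,c)=n$ and $r$ odd, which you explicitly leave open: you only describe the verification that would be needed (namely that the arc of $\Gamma^{C}_{n}(a,b\mid c)$ joining the vertices $n$ and $n+1$ always lies in a segment, forcing $\epsilon=+1$), without carrying it out. This is a genuine gap, and it cannot be closed as you propose, because the claim you would need is false.

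Concretely, take $(a,b,c)=(3,3,1)$ and $n=6$: then $(3,3\mid 1)\notin\Xi_{6}$ (and $b=3>1$, so the type-$D$ normalization is respected), $r=5$, $p=6\wedge 4=2$, $\max(a+b,c)=n$, and Theorem \ref{I3} gives $\chi[\mathfrak{q}^{C}_{6}(3,3\mid 1)]=[\frac{5}{2}]-1=1$. The meander $\Gamma^{C}_{6}(3,3\mid 1)=\Gamma^{A}(3,3,3,3\mid 1,10,1)$ has three invariant segments $\{1,3,10,12\}$, $\{2,11\}$, $\{5,8\}$ and one cycle $\{4,6,7,9\}$, and the arc joining the vertices $6$ and $7$ is an edge of that cycle; Theorem \ref{thm p} then gives $\epsilon=-1$, hence $\chi[\mathfrak{q}^{D}_{6}(3,3\mid 1)]=0$, although $(r,p,\max(a+b,c))=(5,2,n)$ satisfies none of the four listed conditions. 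The same phenomenon occurs for $\mathfrak{q}^{D}_{9}(3,6\mid 6)$ (where $r=p=3$ and $\max(a+b,c)=n$), which the reductions of Lemma \ref{lem r} and Theorem \ref{thm r1} send to the Frobenius parabolic $\mathfrak{q}^{D}_{3}(3\mid\varnothing)$. So in this last case the position of the central arc genuinely depends on the endpoint $(b^{'},c^{'})$ of the reduction and not only on $(r,p)$ (for $r=5$, $p=2$ the endpoint $(1,1)$ gives a segment but $(3,1)$ gives a cycle), new Frobenius algebras do appear, and---using only the paper's own Theorems \ref{I3} and \ref{thm p}---the characterization as printed is not complete. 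A correct proof must classify, for each reduction endpoint, whether the central arc lands in a segment or in a cycle, rather than assume the former.
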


\begin{theo}\label{th B}
On pose $\mathcal{F}^{A}_{n}:=\{\mathfrak{q}^{A}(\underline{a}\mid\underline{b})\subset\mathfrak{gl}(n)\;:\;\chi[\mathfrak{q}^{A}(\underline{a}\mid\underline{b})]=1\}$ et $\mathcal{F}^{D}_{n}:=\{\mathfrak{q}^{D}_{n}(\underline{a}\mid\underline{b})\subset\mathfrak{so}(2n)\;:\;(\underline{a}\mid\underline{b})\in\Xi_{n}\;et\;\chi[\mathfrak{q}^{D}_{n}(\underline{a}\mid\underline{b})]=0\}$. Soient $\underline{a}=(a_{1},\ldots,a_{m})$ et $\underline{b}=(b_{1},\ldots,b_{t})$ deux compositions de $n$ telles que $\mathfrak{q}^{A}(\underline{a}\mid\underline{b})\in\mathcal{F}^{A}_{n}$, alors les sous-alg\`ebres $\mathfrak{q}^{D}_{2n}(2a_{1},\ldots,2a_{m}\mid 2b_{1},\ldots,2b_{t-1},2b_{t}-1)$ et $\mathfrak{q}^{D}_{2n}(2a_{1},\ldots,2a_{m-1},2a_{m}-1\mid 2b_{1},\ldots,2b_{t})$
appartiennent \`a $\mathcal{F}^{D}_{2n}$, et toutes les sous-alg\`ebres de $\mathfrak{so}(2n)$ appartenant \`a $\mathcal{F}^{D}_{2n}$ sont ainsi obtenues.\\
En particulier, pour tout $n\geq 1$, on a $\mathcal{F}^{D}_{2n+1}=\varnothing\;et\;\sharp\mathcal{F}^{D}_{2n}=2\sharp\mathcal{F}^{A}_{n}$
\end{theo}

\begin{proof}
Soit $(\underline{c}=(c_{1},\ldots,c_{m}),\underline{d}=(d_{1},\ldots,d_{t}))\in\Xi_{n}$ tel que $|\underline{c}|=n$. On pose $\underline{d}^{'}=(d_{1},\ldots,d_{t-1},d_{t}+1)$, c'est une composition de $n$. Il r\'esulte du corollaire \ref{corD} que $\mathfrak{q}_{n}^{D}(\underline{c}\mid\underline{d})$ est une sous-alg\`ebre de Frobenuis de $\mathfrak{so}(2n)$ si et seulement si $\Gamma^{A}(\underline{c}\mid\underline{d}^{'})$ est un cycle. D'autre part, il r\'esulte du lemme 3.4 de \cite{MB1} que $\Gamma^{A}(\underline{c}\mid\underline{d}^{'})$ est un cycle si et seulement si $\chi[\mathfrak{q}^{A}(\underline{c}\mid\underline{d}^{'})]=c_{1}\wedge\ldots\wedge c_{m}\wedge d_{1}\wedge\ldots\wedge d_{t-1}\wedge(d_{t}+1)=2$. Il suit du th\'eor\`eme \ref{thm1} que $\chi[\mathfrak{q}^{A}(\underline{a}\mid\underline{b})]=1$ si et seulement si $\Gamma^{A}(\underline{a}\mid\underline{b})$ est un segment. On d\'eduit alors du lemme 2.6 de \cite{MB1} que l'application $\mathfrak{q}^{A}(\underline{a}\mid\underline{b})\longmapsto \mathfrak{q}^{A}(2a_{1},\ldots,2a_{m}\mid 2b_{1},\ldots,2b_{t})$ est une bijection de $\mathcal{F}^{A}_{n}$ sur l'ensemble des sous-alg\`ebres biparaboliques $\mathfrak{q}^{A}(\underline{a}\mid\underline{b})$ de $\mathfrak{gl}(2n)$ tel que $\Gamma^{A}(\underline{a}\mid\underline{b})$ est un cycle. En particulier, $\mathfrak{q}^{D}_{2n}(2a_{1},\ldots,2a_{m}\mid 2b_{1},\ldots,2b_{t-1},2b_{t}-1)$ et $\mathfrak{q}^{D}_{2n}(2a_{1},\ldots,2a_{m-1},2a_{m}-1\mid 2b_{1},\ldots,2b_{t})$ appartiennent \`a $\mathcal{F}^{D}_{2n}$, et toutes les sous-alg\`ebres de $\mathfrak{so}(2n)$ appartenant \`a $\mathcal{F}^{D}_{2n}$ sont ainsi obtenues. De plus, la condition $c_{1}\wedge\ldots\wedge c_{m}\wedge d_{1}\wedge\ldots\wedge d_{t-1}\wedge(d_{t}+1)=2$ montre que $\mathcal{F}^{D}_{n}=\varnothing$ si $n$ est impair.
\end{proof}

Dans \cite{MB1}, nous avons \'etudi\'e la famille des sous-alg\`ebres biparaboliques de $\mathfrak{gl}(n)$ de la forme $\mathfrak{q}^{A}(n\mid\underbrace{a,\ldots,a}_{m},b)$ o\`u $(a,b,m)\in(\mathbb{N}^{\times})^{3}$. L'indice d'une telle sous-alg\`ebre est donn\'e par la formule suivante
$$\chi[\mathfrak{q}^{A}(n\mid\underbrace{a,\ldots,a}_{m},b)]=(a\wedge b)\phi_{m}(\frac{a}{a\wedge b},\frac{b}{a\wedge b})$$
o\`u $\phi_{m}$ la fonction d\'efinie sur $I:=\lbrace(a,b)\in\mathbb{N^{\times}}^{2}\mid \text{$a$ est impair ou b est impair}\rbrace$ par :
$$\phi_{m}(a,b)=\begin{cases} [\frac{m}{2}]+1\;\text{si $a$ est impair et $b$ est impair}\\
[\frac{m+1}{2}]\;\text{si $a$ est impair et $b$ est pair}\\
1\;\text{si $a$ est pair et $b$ est impair}\\
\end{cases}$$
De plus, nous avons d\'ecrit explicitement le m\'eandre $\Gamma^{A}(n\mid\underbrace{a,\ldots,a}_{m},b)$ (voir lemme 3.3 \cite{MB1}). Il en r\'esulte que le dernier sommet de $\Gamma^{A}(n\mid\underbrace{a,\ldots,a}_{m},b)$ appartient \`a un segment de $\Gamma^{A}(n\mid\underbrace{a,\ldots,a}_{m},b)$ si et seulement si $(a\wedge b)=1$. On en d\'eduit le th\'eor\`eme suivant :

\begin{theo}\label{thm a} (Avec les notations pr\'ec\'edentes),
Soit $(a,b,m)\in(\mathbb{N}^{\times})^{3}$. On pose $n=ma+b+1$ et $p=a\wedge(b+1)$. Alors, $(n,(\underbrace{a,\ldots,a}_{m},b))\in\Xi_{n}$ et on a, 
\begin{itemize}
\item[1)] $\chi[\mathfrak{q}^{D}_{n}(n\mid\underbrace{a,\ldots,a}_{m},b)]=\begin{cases}\phi_{m}(a,b+1)\;si\;p=1\\
p\phi_{m}(\frac{a}{p},\frac{b+1}{p})-2\;si\;p\geq 2\\
\end{cases}$
\item[2)] $\mathfrak{q}^{D}_{n}(n\mid\underbrace{a,\ldots,a}_{m},b)$ est une sous-alg\`ebre de Frobenius de $\mathfrak{so}(2n)$ si et seulement si $p=2$ et de plus l'une des conditions suivantes est v\'erifi\'ee \\
\begin{itemize}
\item[(i)]m=1
\item[(ii)]$\frac{a}{2}$ est pair et $\frac{b+1}{2}$ est impair
\item[(iii)]$\frac{a}{2}$ est impair, $\frac{b+1}{2}$ est pair et $m=2$
\end{itemize}

\end{itemize}
\end{theo}

 \bibliographystyle{plain} 
\bibliography{biblio}

\end{document}